\newtheorem{theorem}{Theorem}[section]
\newtheorem{lemma}[theorem]{Lemma}
\newtheorem{proposition}[theorem]{Proposition}
\newtheorem{corollary}[theorem]{Corollary}
\theoremstyle{plain}
\theoremstyle{definition}
\newtheorem{definition}[theorem]{Definition}
\numberwithin{equation}{section}
\renewcommand{\theenumi}{(\roman{enumi})}
\renewcommand{\labelenumi}{\textup{(\theenumi)}}
\title{K-theoretic duality for extensions of  Cuntz--Krieger algebras \\
%(KdualityofCK5.tex)
}
\author{Kengo Matsumoto \\
Department of Mathematics \\
Joetsu University of Education \\
Joetsu, 943-8512, Japan
}
\begin{document}
%\date{2020, Nov 22}

\maketitle

\date{}

\def\det{{{\operatorname{det}}}}

%\maketitle
\begin{abstract}
We introduce the notion of K-theoretic duality 
for extensions of separable unital nuclear $C^*$-algebras
by using K-homology long exact sequence and 
cyclic six term exact sequence for K-theory groups of extensions.
We then prove that the Toeplitz extension  $\mathcal{T}_A$ 
of a Cuntz-Krieger algebra  $\mathcal{O}_A$ 
is the K-theoretic dual of the Toeplitz extension $\mathcal{T}_{A^t}$ 
of the Cuntz-Krieger algebra $\mathcal{O}_{A^t}$ for the
transposed matrix $A^t$ of $A$.
A pair of isomorphic Cuntz--Krieger algebras 
 $\mathcal{O}_A$ and  $\mathcal{O}_B$
does not necessarily yield the isomorphic pair
of $\mathcal{O}_{A^t}$ and  $\mathcal{O}_{B^t}.$
However, 
as an application, 
%two extensions $\mathcal{T}_A$ of $\mathcal{O}_A$ and $\mathcal{T}_B$ of $\mathcal{O}_B$
%are isomorphic if and only if their transposed extensions
%$\mathcal{T}_{A^t}$ of $\mathcal{O}_{A^t}$ and $\mathcal{T}_{B^t}$ of $\mathcal{O}_{B^t}$
%are isomorphic.More simply, 
we may show that 
two Toeplitz  algebras 
$\mathcal{T}_A$
and
$\mathcal{T}_B$
are isomorphic as $C^*$-algebras 
if and only if the Toeplitz algebras 
$\mathcal{T}_{A^t}$
and
$\mathcal{T}_{B^t}$ of their transposed matrices
are isomorphic.
\end{abstract}

{\it Mathematics Subject Classification}:
 Primary 19K33; Secondary 46L80.

{\it Keywords and phrases}: Extension, $C^*$-algebra, K-homology, K-group, Toeplitz algebra,
Cuntz--Krieger algebra.

%%%%%%%%%%%%%%%%%%%%%%%%%%%%%%
%2010{\it Mathematics Subject Classification}:
% Primary 46L55; Secondary 46L35, 37B10.

%{\it Keywords and phrases}:
%Topological Markov shifts, orbit equivalence,  Cuntz-Krieger algebras

\newcommand{\Ker}{\operatorname{Ker}}
\newcommand{\sgn}{\operatorname{sgn}}
\newcommand{\Ad}{\operatorname{Ad}}
\newcommand{\ad}{\operatorname{ad}}
\newcommand{\orb}{\operatorname{orb}}

\def\Re{{\operatorname{Re}}}
\def\det{{{\operatorname{det}}}}
\def\calK{{\mathcal{K}}}

\newcommand{\N}{\mathbb{N}}
\newcommand{\C}{\mathbb{C}}
\newcommand{\R}{\mathbb{R}}
\newcommand{\Rp}{{\mathbb{R}}^*_+}
\newcommand{\T}{\mathcal{T}}

\newcommand{\Z}{\mathbb{Z}}
\newcommand{\Zp}{{\mathbb{Z}}_+}
\def\AF{{{\operatorname{AF}}}}
\def\K{{{\operatorname{K}}}}
\def\Ext{{{\operatorname{Ext}}}}
\def\Exts{{{\operatorname{Ext}_{\operatorname{s}}}}}
\def\Extw{{{\operatorname{Ext}_{\operatorname{w}}}}}
\def\Ext{{{\operatorname{Ext}}}}
\def\KK{{{\operatorname{KK}}}}

\def\OA{{{\mathcal{O}}_A}}
\def\ON{{{\mathcal{O}}_N}}

\def\TA{{{\mathcal{T}}_A}}
\def\TAT{{{\mathcal{T}}_{A^t}}}
\def\DTAT{{{\mathcal{D}(\mathcal{T}_{A^t})}}}
\def\TB{{{\mathcal{T}}_B}}
\def\TBT{{{\mathcal{T}}_{B^t}}}

\def\TN{{{\mathcal{T}}_N}}

\def\A{{\mathcal{A}}}

\def\B{{\mathcal{B}}}
\def\E{{\mathcal{E}}}
\def\Q{{\mathcal{Q}}}
\def\calQ{{\mathcal{Q}}}

\def\OB{{{\mathcal{O}}_B}}
\def\OAT{{{\mathcal{O}}_{A^t}}}
\def\OBT{{{\mathcal{O}}_{B^t}}}
\def\F{{\mathcal{F}}}
\def\G{{\mathcal{G}}}
\def\FA{{{\mathcal{F}}_A}}
\def\PA{{{\mathcal{P}}_A}}
\def\C{{\mathbb{C}}}

 \def\U{{\mathcal{U}}}
\def\OF{{{\mathcal{O}}_F}}
\def\DF{{{\mathcal{D}}_F}}
\def\FB{{{\mathcal{F}}_B}}
\def\DA{{{\mathcal{D}}_A}}
\def\DB{{{\mathcal{D}}_B}}
\def\DZ{{{\mathcal{D}}_Z}}

\def\Ext{{{\operatorname{Ext}}}}
\def\Max{{{\operatorname{Max}}}}
\def\Max{{{\operatorname{Max}}}}
\def\max{{{\operatorname{max}}}}
\def\KMS{{{\operatorname{KMS}}}}
\def\Proj{{{\operatorname{Proj}}}}
\def\Out{{{\operatorname{Out}}}}
\def\Aut{{{\operatorname{Aut}}}}
\def\Ad{{{\operatorname{Ad}}}}
\def\Inn{{{\operatorname{Inn}}}}
\def\Int{{{\operatorname{Int}}}}
\def\det{{{\operatorname{det}}}}
\def\exp{{{\operatorname{exp}}}}
\def\nep{{{\operatorname{nep}}}}
\def\sgn{{{\operatorname{sign}}}}
\def\cobdy{{{\operatorname{cobdy}}}}

\def\Ker{{{\operatorname{Ker}}}}
\def\Coker{{{\operatorname{Coker}}}}
\def\Im{{\operatorname{Im}}}
\def\Hom{{\operatorname{Hom}}}

\def\Ew{{{\operatorname{Ext_w}}}}
\def\Es{{{\operatorname{Ext_s}}}}

\def\ind{{{\operatorname{ind}}}}
\def\Ind{{{\operatorname{Ind}}}}
\def\id{{{\operatorname{id}}}}
\def\supp{{{\operatorname{supp}}}}
\def\co{{{\operatorname{co}}}}

\def\I{{\mathcal{I}}}
\def\Span{{{\operatorname{Span}}}}
\def\event{{{\operatorname{event}}}}
\def\S{\mathcal{S}}
\def\calO{\mathcal{O}}
\def\calT{\mathcal{T}}
\def\D{\frak{D}}

%%%%%%%%%%%%%%%%%%%%%%%%%%%%%%%%%%%%%%%%%%%%%%%
%%%%%%%%%%%%%%%%%%%%%%%%%%%%%%%%%%%%%%%%%%
\section{Introduction}
%%%%%%%%%%%%%%%%%%%%%%%%%%%%%%%%%%%%%%

Let $\A$ be a separable unital $C^*$-algerba.
Let us denote by $\calK(H)$ the $C^*$-algebra of compact operators on a separable 
infinite dimensional Hilbert space $H$.
An extension of a $C^*$-algebra $\A$ means a short exact sequence
\begin{equation} \label{eq:extA}
0 \longrightarrow \calK(H) 
  \longrightarrow \E 
 \longrightarrow \A
  \longrightarrow 0
\end{equation}
of $C^*$-algebras  
for which a unital $C^*$-algebra $\E$ contains $\calK(H)$ as an essential ideal.
Classification of extensions of $C^*$-algebras have been developing from many sides, 
K-theory of $C^*$-algebras, non commutative geometry, 
structure theory of $C^*$-algebras, operator theory,
 homotopy theory in algebraic topology, etc. 
 (see text books \cite{Blackadar}, \cite{Douglas}, \cite{HR}, etc.).
 In this paper, we will study 'duality' of extensions of $C^*$-algebras
 from the viewpoints of K-homology theory and K-group theory of $C^*$-algebras.

The notion of K-theoretic 'dual' of a $C^*$-algebra was first introduced
 by W. L. Paschke in \cite{Paschke1981}
from the viewpoint of a duality of $\Ext$-groups and $\K$-groups.
After Paschke, N. Higson refined the dual $C^*$-algebra written $\D(\A)$  of a $C^*$-algebra $\A$
to define $\K$-homology groups $\K^*(\A)$ for $\A$ (\cite{Higson}, cf. \cite{HR}).
By using the $\K$-homology groups, 
Higson and Higson--Roe in \cite{HR}  studied  analytic K-homology theory for $C^*$-algebras comparing 
Kasparov's $\KK$-theory (\cite{Kasparov1975}, \cite{Kasparov1981}). 
  Kaminker--Putnam in \cite{KP} introduced 
  the notion of $\K$-theoretic duality
 from the viewpoint of Spanier-Whitehead dual  
  in topology  and showed  that the Cuntz--Krieger algebras $\OA$ and $\OAT$ 
 are the $\K$-theoretic duality pair (cf. \cite{KPW},\cite{KS}).

In this paper, we introduce the notion of K-theoretic duality 
for extensions of a separable unital nuclear $C^*$-algebras.
Let us denote by $\widetilde{\K}^*(\A)$ and ${\K}^*(\A)$ the reduced K-homology 
and the unreduced K-homology of a $C^*$-algebra $\A$ as in Higson--Roe's text book 
\cite{HR}.
Since
$\widetilde{\K}^1(\A)$ and ${\K}^1(\A)$
are identified with 
the strong extension group $\Es(\A)$ and 
the weak extension group $\Ew(\A)$
for a separable unital nuclear $C^*$-algebra 
$\A$, 
the extension \eqref{eq:extA} of $\A$ 
gives rise to an element $[\E]_s$ in $\widetilde{\K}^1(\A)$ 
and an element $[\E]_w$ in ${\K}^1(\A)$, respectively.
Two extensions 
\begin{equation} \label{eq:extAandAprime}
0 \longrightarrow \calK(H) 
  \longrightarrow \E 
 \longrightarrow \A
  \longrightarrow 0, \qquad
0 \longrightarrow \calK(H) 
  \longrightarrow \E^\prime 
 \longrightarrow \A^\prime
  \longrightarrow 0
\end{equation}
are said to be a K-{\it theoretic duality pair}\/
(Definition \ref{def:Kduality})
if the K-homology long exact sequence for $\A$ (resp. $\A'$)
\begin{align*}
\cdots
& \longrightarrow \widetilde{\K}^p(\A)
\longrightarrow {\K}^p(\A)
\longrightarrow {\K}^p(\mathbb{C})
\longrightarrow \widetilde{\K}^{p+1}(\A)
\longrightarrow {\K}^{p+1}(\A)
\longrightarrow \cdots \\
(\text{resp.}
\cdots
& \longrightarrow \widetilde{\K}^p(\A')
\longrightarrow {\K}^p(\A')
\longrightarrow {\K}^p(\mathbb{C})
\longrightarrow \widetilde{\K}^{p+1}(\A')
\longrightarrow {\K}^{p+1}(\A')
\longrightarrow \cdots )
\end{align*}
is realized as the K-theory long exact sequence for the other extension
\begin{align*}
\cdots
&\longrightarrow {\K}_{p+1}(\E')
\longrightarrow {\K}_{p+1}(\A')
\longrightarrow {\K}_p(\calK(H))
\longrightarrow \widetilde{\K}_{p}(\E')
\longrightarrow {\K}_{p}(\A')
\longrightarrow \cdots \\
(\text{resp.}
\cdots
& \longrightarrow {\K}_{p+1}(\E)
\longrightarrow {\K}_{p+1}(\A)
\longrightarrow {\K}_p(\calK(H))
\longrightarrow \widetilde{\K}_{p}(\E)
\longrightarrow {\K}_{p}(\A)
\longrightarrow \cdots)
\end{align*}
for $p=0,1$ (mod 2).
If in particular, we may find the above realizations 
between $\K^0(\mathbb{C})$ and 
$\K_0(\calK(H))$ in order  preserving way, 
and  
between $\widetilde{\K}^1(\A)$ and $\K_0(\E')$
(resp. between $\widetilde{\K}^1(\A')$ and $\K_0(\E)$)
in sending  $[\E]_s$ to $- [1_{\E'}]$
(resp. $[\E']_s$ to $- [1_{\E}]$),
then 
 the K-theoretic duality pair 
 is said to be {\it strong}\/ K-{\it theoretic duality pair}\/
 (Definition \ref{def:Kduality}), 
 where $ [1_{\E'}], [1_\E]$ denote the classes in $\K_0(\E'), \K_0(\E)$ of the units of 
 $\E', \E$, respectively.

 For a Cuntz--Krieger algebra $\OA$, there exists a specific extension 
 \begin{equation}\label{eq:Toeplits}
\tau_A:
\qquad
0 \longrightarrow \calK(H) 
\overset{\iota_A}{\longrightarrow} \TA 
\overset{\pi_A}{\longrightarrow} \OA
  \longrightarrow 0
 \end{equation}
  called the Toeplitz extension of $\OA$ that was introduced in
  \cite{EFW1979} and \cite{Ev} (cf. \cite{KP}).
 We will prove that the K-homology long exact sequence for $\OA$ is realized as the 
 K-theory cyclic six term exact sequence
  for the Toeplitz extension $\tau_{A^t}$ 
 defined by the the transposed matrix $A^t$ of $A$
 such that 
 the class $[\TA]_s \in \widetilde{\K}(\OA)$ 
 of the Toeplitz extension $\tau_A$ exactly
 corresponds to the class $-[1_{\TAT}] \in \K_0(\TAT)$
 of the minus of the unit $1_{\TAT}$ of $\TAT$ 
 under the identification between $\K^0(\mathbb{C})$ and $\K_0(\calK(H))$
 in order preserving way (Theorem \ref{thm:OATAT}).
 We then know that the Toeplitz extension of a Cuntz-Krieger algebra 
is the K-theoretic dual of the Toeplitz extension of the Cuntz-Krieger algebra for the
transposed matrix in the following way. 
\begin{theorem}\label{thm:main2}
Let $A$ be an irreducible non permutation matrix with entries in $\{0,1\}$.
The Toeplitz extensions $\tau_A$ and $\tau_{A^t}$
of the Cuntz--Krieger algebras 
$\OA$ and $\OAT$ are strong K-theoretic duality pair. 
\end{theorem}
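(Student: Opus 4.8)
The plan is to obtain Theorem~\ref{thm:main2} as a symmetric application of Theorem~\ref{thm:OATAT}, which already delivers one half of the duality together with the distinguished correspondences demanded by Definition~\ref{def:Kduality}. The single new ingredient is the observation that the standing hypotheses are invariant under transposition: if $A$ is an irreducible non-permutation $\{0,1\}$-matrix, then so is $A^{t}$, since the transpose reverses the edges of the associated graph while preserving strong connectedness, and $A^{t}$ is a permutation matrix precisely when $A$ is. Hence Theorem~\ref{thm:OATAT} may be invoked with $A^{t}$ in the role of $A$, and because $(A^{t})^{t}=A$ its conclusion then reads: the K-homology long exact sequence for $\OAT$ is realized as the K-theory cyclic six term exact sequence for the Toeplitz extension $\tau_{A}$, in an order-preserving way on the copies $\K^{0}(\C)\cong\K_{0}(\calK(H))$, and carrying $[\TAT]_{s}\in\widetilde{\K}^{1}(\OAT)$ to $-[1_{\TA}]\in\K_{0}(\TA)$.

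With this in hand I would set $\A=\OA$, $\E=\TA$, $\A'=\OAT$, $\E'=\TAT$ in Definition~\ref{def:Kduality} and read off the two required realizations directly. Theorem~\ref{thm:OATAT} applied to $A$ is exactly the realization of the K-homology long exact sequence for $\A$ as the K-theory six term sequence for $\E'$, including the order-preserving identification of $\K^{0}(\C)$ with $\K_{0}(\calK(H))$ and the normalization $[\E]_{s}\mapsto-[1_{\E'}]$; the instance obtained above for $A^{t}$ supplies the parenthetical ``resp.'' clause, realizing the K-homology long exact sequence for $\A'$ as the K-theory six term sequence for $\E$ with $[\E']_{s}\mapsto-[1_{\E}]$. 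Both conditions defining a \emph{strong} K-theoretic duality pair are thereby verified, and the theorem follows.

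For orientation, the group-theoretic shadow of these realizations is the Kaminker--Putnam Spanier--Whitehead identification $\K^{i}(\OA)\cong\K_{i+1}(\OAT)$ (indices mod $2$) on the Cuntz--Krieger quotient terms, together with the induced isomorphisms $\widetilde{\K}^{1}(\OA)\cong\K_{0}(\TAT)$ and $\widetilde{\K}^{0}(\OA)\cong\K_{1}(\TAT)$ on the reduced terms; under these the two six term sequences line up arrow by arrow, and the summand generated by the unit $1_{\TAT}$ is matched with the copy $\K^{0}(\C)=\Z$, which is what the sign normalization $[\TA]_{s}\mapsto-[1_{\TAT}]$ pins down. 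The genuine content resides entirely in Theorem~\ref{thm:OATAT}; the only point needing care at this stage is to confirm that its application to $A^{t}$ produces precisely the ``resp.'' half of Definition~\ref{def:Kduality}, with the same order convention on $\K^{0}(\C)\cong\K_{0}(\calK(H))$ and the correct sign on the unit class, so that the two halves are mutually consistent and no further computation is required.
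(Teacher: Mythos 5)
Your proposal is correct and is essentially the paper's own argument: the paper simply states that Theorem~\ref{thm:OATAT} implies Theorem~\ref{thm:main2}, which implicitly means applying Theorem~\ref{thm:OATAT} to both $A$ and $A^{t}$ and matching the two resulting diagrams against Definition~\ref{def:Kduality}. Your explicit remark that irreducibility and the non-permutation property are preserved under transposition, so that the theorem may legitimately be invoked for $A^{t}$, is a detail the paper leaves unstated but is exactly the right justification.
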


 In general an isomorphic pair 
 $\OA\cong \OB$ of Cuntz--Krieger algebras does not necessarily yield an isomorphic pair
 $\OAT\cong \OBT$ of the Cuntz--Krieger algebras of their transposed matrices. 
 For example, the Cuntz--Krieger algebra $\OA$ for the matrix
$
A=
\left[
\begin{smallmatrix}
1 & 1& 1 \\
1 & 1& 1 \\
1 & 0& 0 
\end{smallmatrix}
\right]
$
 is isomorphic to 
$\mathcal{O}_3$,
whereas
$\OAT$ is not isomorphic to $\mathcal{O}_3$ (cf. \cite{EFW1981}, \cite{Ro}).
Eilers--Restroff--Ruiz in \cite{ERR2013} (cf. \cite{ERRS})
showed that certain extensions of classifiable $C^*$-algebras are strongly classified 
by their  associated cyclic six term exact sequences in K-groups
together with the positive cones of the $\K_0$-groups of the ideals and quotients.
By applying their classification result in our setting,  
 as an application of Theorem \ref{thm:main2},
we will prove the following theorem.  
 
\begin{theorem}\label{thm:main3}
Let $A, B$ be irreducible non permutation matrices with entries in $\{0,1\}.$
The following are equivalent.
%Keep the above notation.\hspace{4cm}
 \begin{enumerate}
\renewcommand{\theenumi}{(\roman{enumi})}
\renewcommand{\labelenumi}{\textup{\theenumi}}
\item There exist isomorphisms 
$\Phi:\OA\longrightarrow \OB$ and $\Psi:\TA\longrightarrow \TB$
of $C^*$-algebras such that the diagram 
\begin{equation}\label{eq:CDTATBOAOB}
\begin{CD}
\TA @>{\pi_A}>> \OA \\
@V{\Psi}VV @VV{\Phi}V \\ 
\TB @>{\pi_B}>> \OB 
\end{CD}
\end{equation}
commutes.
\item There exist isomorphisms 
$\Phi^t:\OAT\longrightarrow \OBT$ and $\Psi^t:\TAT\longrightarrow \TBT$
of $C^*$-algebras such that the diagram 
\begin{equation} \label{eq:CDTATBOAOB2}
\begin{CD}
\TAT @>{\pi_{A^t}}>> \OAT \\
@V{\Psi^t}VV @VV{\Phi^t}V \\ 
\TBT @>{\pi_{B^t}}>> \OBT 
\end{CD}
\end{equation}
commutes.
\end{enumerate}
\end{theorem}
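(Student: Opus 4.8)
The plan is to prove the single implication (i) $\Rightarrow$ (ii); the converse then follows formally. Indeed, since $A^t$ is again irreducible non permutation and $(A^t)^t = A$, statement (ii) for $(A,B)$ is literally statement (i) for $(A^t,B^t)$, so once (i) $\Rightarrow$ (ii) is known for every admissible pair it yields both implications. First I would reinterpret the commuting square \eqref{eq:CDTATBOAOB} as an isomorphism of the Toeplitz extensions \eqref{eq:Toeplits} of $A$ and $B$. Since $\pi_B\circ\Psi=\Phi\circ\pi_A$ with $\Phi$ an isomorphism, $\Psi$ carries $\Ker\pi_A=\iota_A(\calK(H))$ onto $\Ker\pi_B=\iota_B(\calK(H))$, so $(\Psi,\Phi)$ is an isomorphism of the two short exact sequences restricting to an automorphism of $\calK(H)$ on the ideals. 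In particular the strong extension class is pulled back correctly, $\Phi^*([\TB]_s)=[\TA]_s$ in $\widetilde{\K}^1(\OA)=\Es(\OA)$, because the Busby invariant is natural and $\Aut(\calK(H))$ acts trivially on $\Es$. By naturality of the K-homology long exact sequence, $\Phi$ then induces an isomorphism of the K-homology long exact sequences of $\OA$ and $\OB$ that fixes the $\K^*(\C)$ column (as $\Phi$ is unital) and identifies $[\TA]_s$ with $[\TB]_s$.

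Next I would feed this through the strong duality. By Theorem \ref{thm:main2} applied to $A$ and to $B$, the K-homology long exact sequence of $\OA$ (resp. $\OB$) is realized as the cyclic six term K-theory exact sequence of the transposed Toeplitz extension $\tau_{A^t}$ (resp. $\tau_{B^t}$), the identification being order preserving between $\K^0(\C)$ and the ideal group $\K_0(\calK(H))$ and sending $[\TA]_s$ to $-[1_{\TAT}]$ (resp. $[\TB]_s$ to $-[1_{\TBT}]$). Composing the isomorphism of K-homology sequences from the previous step with these two identifications produces an isomorphism $\Theta$ of the six term K-theory invariants of $\tau_{A^t}$ and $\tau_{B^t}$. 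Tracking the distinguished data, $\Theta$ restricts to an order automorphism of $(\K_0(\calK(H)),\Zp)$ on the ideals, hence to the identity there; it is automatically order preserving on $\K_0(\OAT)\to\K_0(\OBT)$, since these Cuntz--Krieger algebras are purely infinite simple and so have $\K_0$ equal to its own positive cone; and it sends $[1_{\TAT}]$ to $[1_{\TBT}]$, because chasing $-[1_{\TAT}]\mapsto[\TA]_s\mapsto[\TB]_s\mapsto-[1_{\TBT}]$ gives $\Theta(-[1_{\TAT}])=-[1_{\TBT}]$.

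Finally I would invoke the classification of full extensions of Eilers--Restorff--Ruiz \cite{ERR2013}. The extension $\tau_{A^t}$ is a unital essential extension of the unital UCT Kirchberg algebra $\OAT$ by the compacts $\calK(H)$, so it lies in the scope of their strong classification by the cyclic six term exact sequence together with the positive cones of the $\K_0$-groups of the ideal and quotient. The invariant isomorphism $\Theta$ therefore lifts to an isomorphism of extensions $\tau_{A^t}\cong\tau_{B^t}$, that is, to a pair of isomorphisms $\Psi^t:\TAT\to\TBT$ and $\Phi^t:\OAT\to\OBT$ making \eqref{eq:CDTATBOAOB2} commute; the preservation of the unit class by $\Theta$ is what guarantees that $\Psi^t$ may be taken unital. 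This establishes (ii), and the symmetric argument finishes the equivalence.

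The main obstacle I anticipate is matching, on the nose, the ordered six term invariant used in \cite{ERR2013} with the data transported through the duality of Theorem \ref{thm:main2}. One must verify that the duality identifications are genuine isomorphisms of the \emph{entire} exact sequences rather than term by term, that they intertwine the maps induced by $\Phi$, and that the order on the ideal group $\K_0(\calK(H))$ together with the unit class $[1_{\TAT}]$ correspond precisely to the order preserving part and to the image of $[\TA]_s$ singled out in the strong duality. Checking the hypotheses of the Eilers--Restorff--Ruiz theorem for these Toeplitz extensions, namely fullness of the extension and the UCT/Kirchberg nature of the quotient, is the remaining point requiring care, but it follows from the standard structure theory of Cuntz--Krieger algebras for irreducible non permutation $A$.
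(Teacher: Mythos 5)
Your proposal is correct and follows essentially the same route as the paper: extract $\Phi_s^*([\TB]_s)=[\TA]_s$ from the commuting square, transport the resulting isomorphism of K-homology six term sequences through the strong duality of Theorem \ref{thm:main2} to obtain an isomorphism of the six term K-theory invariants of $\tau_{A^t}$ and $\tau_{B^t}$ preserving the unit classes and the order data, and then lift it to an isomorphism of extensions via the Eilers--Restorff--Ruiz classification. The only cosmetic differences are that you make the reduction to one implication via $(A^t)^t=A$ explicit and argue the class identity $\Phi_s^*([\TB]_s)=[\TA]_s$ directly from naturality of the Busby invariant, where the paper routes this through its Lemma \ref{lem:No4} and Proposition \ref{prop:No7}.
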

Since an isomorphism from $\TA$ to $\TB$ automatically
induces the commutative diagram in Theorem \ref{thm:main3} (i),
we have the following corollary.
\begin{corollary}%[{Corollary \ref{cor:TATB}}]
\label{cor:main4}
Let $A, B$ be irreducible non permutation matrices with entries in $\{0,1\}.$
the Toeplitz algebras 
$\TA$ and $\TB$ are isomorphic if and only if 
the Toeplitz algebras
$\TAT$ and $\TBT$ defined by their transposed matrices are isomorphic.
\end{corollary}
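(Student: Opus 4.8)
The plan is to deduce the corollary directly from Theorem~\ref{thm:main3} by showing that \emph{any} $*$-isomorphism between two Toeplitz algebras automatically respects the extension structure, so that the commuting square in Theorem~\ref{thm:main3}~(i) is obtained for free. The crucial structural observation is that $\iota_A(\calK(H))$ is the unique nontrivial closed two-sided ideal of $\TA$. Indeed, in the Toeplitz extension \eqref{eq:Toeplits} the ideal $\iota_A(\calK(H))$ is essential (by the definition of an extension) and simple, while the quotient $\OA$ is simple because $A$ is irreducible and not a permutation matrix. Hence any nonzero closed ideal $J$ of $\TA$ either meets $\iota_A(\calK(H))$ trivially, forcing $J=0$ by essentiality, or meets it nontrivially, forcing $J\supseteq \iota_A(\calK(H))$ by simplicity of $\calK(H)$; in the latter case $J/\iota_A(\calK(H))$ is an ideal of the simple algebra $\OA$, so $J=\iota_A(\calK(H))$ or $J=\TA$. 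Thus the ideal lattice of $\TA$ is the three-term chain $\{0,\ \iota_A(\calK(H)),\ \TA\}$, and likewise for $\TB$, $\TAT$, $\TBT$.

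First I would use this uniqueness to manufacture the diagram of Theorem~\ref{thm:main3}~(i) from an abstract isomorphism. Given a $*$-isomorphism $\Psi\colon\TA\to\TB$, it must carry the unique nontrivial ideal of $\TA$ onto that of $\TB$, that is $\Psi(\iota_A(\calK(H)))=\iota_B(\calK(H))$. Consequently $\Psi$ descends to a $*$-isomorphism $\Phi\colon\OA\to\OB$ of the quotients with $\Phi\circ\pi_A=\pi_B\circ\Psi$, which is precisely the commuting square \eqref{eq:CDTATBOAOB}. Hence $\TA\cong\TB$ implies condition~(i) of Theorem~\ref{thm:main3}, and the equivalence (i)$\Leftrightarrow$(ii) of that theorem then supplies condition~(ii); in particular the isomorphism $\Psi^t\colon\TAT\to\TBT$ appearing there gives $\TAT\cong\TBT$.

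For the converse I would run the identical argument with $A,B$ replaced by $A^t,B^t$. Since $A^t$ and $B^t$ are again irreducible non-permutation matrices, $\TAT$ has $\iota_{A^t}(\calK(H))$ as its unique nontrivial ideal, so an isomorphism $\TAT\cong\TBT$ induces the commuting square \eqref{eq:CDTATBOAOB2}, which is exactly condition~(ii) of Theorem~\ref{thm:main3}. Applying the implication (ii)$\Rightarrow$(i) and using $(A^t)^t=A$ and $(B^t)^t=B$ produces the isomorphism $\Psi\colon\TA\to\TB$, so $\TA\cong\TB$.

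The only substantive point in this argument is the uniqueness of the ideal $\calK(H)$ inside $\TA$; once that is established, the rest is formal bookkeeping with Theorem~\ref{thm:main3}. I expect no genuine obstacle, since the simplicity of $\OA$ for irreducible non-permutation $A$ together with the essentiality and simplicity of $\calK(H)$ are standard facts, and they immediately upgrade an abstract isomorphism of Toeplitz algebras to a morphism of extensions.
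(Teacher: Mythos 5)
Your argument is correct and is essentially the paper's own: the paper derives the corollary from Theorem \ref{thm:main3} by exactly the observation that an isomorphism of Toeplitz algebras must preserve the unique nontrivial ideal $\calK(H)$ (a fact also invoked inside the proof of Theorem \ref{thm:main3}) and hence automatically induces the commuting square. Your write-up merely supplies the routine details of the ideal-lattice computation that the paper leaves implicit.
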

We will finally present a classification result for the Toeplitz algebras
(Proposition \ref{prop:classification TA})
with several examples (Section \ref{sect:Examples}).

%\medskip

%The contents of the paper is the following:
%\begin{center}
%Contents
%\end{center}

%\begin{enumerate}
%\renewcommand{\theenumi}{\arabic{enumi}}
%\renewcommand{\labelenumi}{\textup{\theenumi}}
%\item 

%1. Introduction

%\item 

%2. Preliminaries for $\Ext$-groups and $\K$-homology groups
%\item 

%3. Cuntz--Krieger algebras and  $\K$-homology groups
%\item 

%4. K-theory for Toeplitz algebra $\TAT$
%\item 

%5. K-theoretic duality for Toeplitz extensions
%\item 

%6. K-theoretic duality for extensions
%\item 

%7. Duality for Toeplitz extensions
%\item

%8. Examples
%\end{enumerate}

 %%%%%%%%%%%%%%%%%%%%%%%%%%%%%%%%%%%%%%%%%%%%%%%%%%%%%%%%%
%%%%%%%%%%%%%%%%%%%%%%%%%%%%%%%%%%%%%%%%%%
\section{Preliminaries for $\Ext$-groups and $\K$-homology groups}
%%%%%%%%%%%%%%%%%%%%%%%%%%%%%%%%%%%%%%
In this section, we briefly give an introduction of $\K$-homology groups based on \cite{Higson} and \cite{HR},
see \cite{HR} for detail.
In what follows, $\A$ stands for a separable unital nuclear  $C^*$-algebra.
%%%%%%%%%%%%%%%%%%%%%%%%%%%%%%%%%%%%%%%%%%%%%%
\subsection{$\Ext$-groups}
%%%%%%%%%%%%%%%%%%%%%%%%%%%%%%%%%%%%%%%%%%%
It is well-known that an  extension \eqref{eq:extA} bijectively corresponds 
to a unital $*$-monomorphism $\tau:\A\longrightarrow \Q(H)$ 
that is called a Busby invariant,
where $\Q(H)$
denotes the Calkin algebra $\B(H)/\calK(H)$.
We also call such a $*$-monomorphism $\tau:\A\longrightarrow \Q(H)$
an extensionof $\A$.
Two extensions
$\tau_i : \A \longrightarrow \Q(H), i=1,2$
are said to be 
weak equivalent if there exists a unitary $u\in \Q(H)$ 
such that 
$\tau_2(a) = u \tau_1(a) u^*, a \in \A$. 
If in particular the above unitary is taken as $u=\pi(U)$ 
for some unitary $U \in \B(H)$,  
where $\pi:\B(H)\longrightarrow \Q(H)$ denotes the natural quotient map,
  then the extensions
$\tau_i : \A \longrightarrow \Q(H), i=1,2$
are said to be  
strong equivalent.
Let us denote by $\Es(\A)$ and $\Ew(\A)$ the strong equivalence classes 
and the weak equivalence classes of extensins of $\A$, respectively.
The class of a unital $*$-monomorphism
$\tau:\A\longrightarrow \Q(H)$ in $\Es(\A)$ is denoted by $[\tau]_s$, and similarly 
$[\tau]_w$ in $\Ew(\A).$  
Fix an identification $H \oplus H = H$ which induces an embbeding
$\Q(H) \oplus \Q(H) \hookrightarrow \Q(H)$.
The sum of extensions 
$\tau_1 \oplus \tau_2$ are defined by a direct sum $\tau_1\oplus \tau_2$ through the embedding  
$\Q(H) \oplus \Q(H) \hookrightarrow \Q(H)$.
It is well-known that both $\Es(\A)$ and $\Ew(\A)$ become abelian  semigroups,
and also they are abelian groups for nuclear $C^*$-algebra $\A$ 
(cf. \cite{Arveson}, \cite{BDF}, \cite{CE}, \cite{Douglas}, etc.).
They are called the strong extension group for $\A$ 
and the weak extension group for $\A$.
Let us denote by $q_A: \Es(\A) \longrightarrow \Ew(\A)$
the surjective homomorphism induced by a natural quotient map.
As in \cite{HR} and \cite{PP}, there exists a homomorphism
$\iota_\A: \Z \longrightarrow \Es(\A)$
such that the sequence
\begin{equation}\label{eq:ZSW}
\Z
 \overset{\iota_\A}{\longrightarrow} \Es(\A)
 \overset{q_\A}{\longrightarrow} \Ew(\A)
 %\longrightarrow 0
\end{equation}
is exact at the middle, and hence
the quotient group $\Es(\A)/\iota_\A(\Z)$ is isomorphic to $\Ew(\A).$

%%%%%%%%%%%%%%%%%%%%%%%%%%%%%%%%%%%%%%%%%%%%%%
\subsection{$\K$-homology groups}
%%%%%%%%%%%%%%%%%%%%%%%%%%%%%%%%%%%%%%%%%%%
Let $\A$ be a separable unital $C^*$-algebra.
Following \cite{HR}, a non degenerate representation $\rho: \A\longrightarrow \B(H)$
on a separable infinite dimensional Hilbert space $H$ is said to be {\it ample}\/
if $\rho(a)$ never belongs to $\calK(H)$ unless $a=0$. 
   Hence any non degenerate representation of a simple $C^*$-algebra 
is always ample if $\A$ is of infinite dimensional.  
Let $\rho:\A\longrightarrow \B(H)$ be an ample representation.
According to 
Paschke \cite{Paschke1981}, Higson \cite{Higson} and Higson--Roe \cite{HR}, 
the dual $C^*$-algebra of $\A$ 
means the $C^*$-subalgebra $\D_\rho(\A)$  of $\B(H)$ defined by
\begin{equation}
\D_\rho(\A) = \{T \in \B(H) \mid [T,\rho(a)] \in \calK(H) \text{ for all } a \in \A\}.
\end{equation} 
The isomorphism type of the $C^*$-algebra $\D_\rho(\A)$ does not depend on the choice of ample 
representations, so that it is denoted by $\D(\A)$
(\cite{Higson}, \cite{HR}, \cite{Paschke1981}).
Let us denote by $\widetilde{A}$ the unitization of $\A$ which contains 
$\A$ as a maximal two-sided ideal with codimension one.
In 
Higson \cite{Higson} and Higson--Roe \cite{HR},
the reduced $\K$-homology groups $\widetilde{\K}^p(\A)$
and
the unreduced $\K$-homology groups $\K^p(\A)$ for $p=0,1$ are defined by
\begin{equation*}
\widetilde{\K}^p(\A) := \K_{1-p}(\D(\A))
\quad
\text{ and }
 \quad 
\K^p(\A) := \K_{1-p}(\D(\widetilde{\A}))
\quad
\text{ for } p= 0,1.% \quad (\text{see } \cite{Higson}, \cite{HR}).
\end{equation*}
\begin{lemma}[{\cite{HR}}] \label{lem:HR2.1}
For a separable unital nuclear $C^*$-algebra $\A$, we have
 \begin{equation*}
\widetilde{\K}^1(\A) %=&  \K_0(\D(\A)) 
= \Ext_s(\A), \qquad
{\K}^1(\A)  %= & \K_0(\D(\widetilde{\A})) 
= \Ext_s(\widetilde{\A})= \Ext_w(\A).
\end{equation*}
\end{lemma}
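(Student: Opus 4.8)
The plan is to deduce the lemma from Paschke duality---the natural isomorphism $\K_0(\D(\B)) \cong \Es(\B)$ valid for every separable unital nuclear $C^*$-algebra $\B$---applied to $\B = \A$ and to $\B = \widetilde{\A}$, and then to reconcile $\Es(\widetilde{\A})$ with $\Ew(\A)$ by comparing two exact sequences. First I would fix a unital ample representation $\rho: \A \to \B(H)$, which exists and is nondegenerate because $\A$ is unital, and realize the dual algebra as $\D(\A) = \D_\rho(\A)$. Since $[T,\rho(a)] \in \calK(H)$ for all $a$ says exactly that $\pi(T)$ commutes with $\pi(\rho(\A))$, the ideal $\calK(H)$ sits inside $\D_\rho(\A)$ with quotient the relative commutant of $\pi(\rho(\A))$ in the Calkin algebra $\Q(H)$, giving
\[ 0 \longrightarrow \calK(H) \longrightarrow \D_\rho(\A) \overset{\pi}{\longrightarrow} \D_\rho(\A)/\calK(H) \longrightarrow 0. \]
Because $\K_0(\calK(H)) = \Z$ and $\K_1(\calK(H)) = 0$, the six-term sequence of this extension already produces the pattern $\Z \to \K_0(\D_\rho(\A)) \to \K_0(\D_\rho(\A)/\calK(H)) \to 0$ that will have to match \eqref{eq:ZSW}.

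Next I would establish Paschke's isomorphism $\K_0(\D_\rho(\A)) \cong \Es(\A)$ by exhibiting mutually inverse maps. Given a projection $P$ in $M_n(\D_\rho(\A))$ with infinite-dimensional range, the assignment $a \mapsto \pi\big(P\,\rho^{(n)}(a)\,P\big)$, where $\rho^{(n)}$ is the $n$-fold inflation of $\rho$, is a unital $*$-homomorphism into $\Q(P H^{\oplus n}) \cong \Q(H)$---it is multiplicative modulo $\calK(H)$ precisely because $P$ commutes with $\rho^{(n)}(\A)$ modulo compacts---and, after passing to formal differences, this descends to a homomorphism $\K_0(\D_\rho(\A)) \to \Es(\A)$. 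For the inverse I would invoke Voiculescu's theorem: as $\pi \circ \rho$ is a trivial extension arising from an ample representation, any extension $\tau$ satisfies $\tau \oplus (\pi\circ\rho) \sim_s \pi \circ \rho$, and conjugating the first-coordinate projection back into $\Q(H)$ yields a projection in $\D_\rho(\A)/\calK(H)$ whose lift---unique up to the index living in $\K_0(\calK(H))$---names the class in $\K_0(\D_\rho(\A))$. Nuclearity of $\A$ enters here through the Choi--Effros lifting theorem, both to know that $\Es(\A)$ is a group and to check that the two assignments are inverse to one another. This gives $\widetilde{\K}^1(\A) = \K_0(\D(\A)) = \Es(\A)$, and the very same argument applied to the separable unital nuclear algebra $\widetilde{\A}$ gives $\K^1(\A) = \K_0(\D(\widetilde{\A})) = \Es(\widetilde{\A})$.

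It then remains to identify $\Es(\widetilde{\A})$ with $\Ew(\A)$, which I would obtain by comparing exact sequences. On one side, \eqref{eq:ZSW} together with the surjectivity of $q_\A$ gives $\Ew(\A) \cong \Es(\A)/\iota_\A(\Z)$. On the other side, the degree-$1$ segment of the reduced-to-unreduced K-homology long exact sequence recalled in the Introduction reads $\K^0(\C) \to \widetilde{\K}^1(\A) \to \K^1(\A) \to \K^1(\C)$; since $\K^0(\C) = \Z$ and $\K^1(\C) = 0$, the identifications of the previous step turn this into $\Z \to \Es(\A) \to \Es(\widetilde{\A}) \to 0$. The connecting map $\Z = \K^0(\C) \to \Es(\A)$ carries the generator to the class of the degenerate extension coming from an infinite-rank trivial summand, which is exactly $\iota_\A(1)$; hence $\Es(\widetilde{\A}) \cong \Es(\A)/\iota_\A(\Z) \cong \Ew(\A)$. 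Conceptually, the infinite-rank summand $\pi(1_{\widetilde{\A}} - 1_\A)$ carried by any unital extension of $\widetilde{\A}$ is precisely what absorbs the Fredholm-index obstruction distinguishing strong from weak equivalence.

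The main obstacle is the second step, Paschke duality itself. Writing down the compression map is routine, but proving it is a bijection rests on Voiculescu's theorem---to get surjectivity and independence of all choices, using crucially that $\rho$ is ample---and on the Choi--Effros lifting theorem, which secures the group structure on $\Es(\A)$ and the fact that the two assignments are mutually inverse. By contrast, the exact-sequence bookkeeping of the first step and the comparison argument of the third step are formal once Paschke duality and the cited K-homology long exact sequence are available.
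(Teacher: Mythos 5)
The paper gives no proof of this lemma at all: it is imported verbatim from Higson--Roe \cite{HR} (with Paschke \cite{Paschke1981} behind it), so there is nothing in the text to compare against except the citation. Your outline is precisely the standard argument from that source --- Paschke duality $\K_0(\D_\rho(\A))\cong\Exts(\A)$ via compression by projections commuting with $\rho(\A)$ modulo compacts, with Voiculescu's theorem supplying surjectivity and independence of choices and Choi--Effros supplying the group structure, applied to both $\A$ and $\widetilde{\A}$ --- so the approach matches and the sketch is sound. One small caution on your third step: the connecting map $\K^0(\mathbb{C})\to\Exts(\A)$ sends the generator to the class of a \emph{trivial extension conjugated by a unitary of Fredholm index one} (this is how $\iota_\A$ is described later in the paper), not to ``the class of the degenerate extension,'' which is the zero element of $\Exts(\A)$; if the image were zero the quotient argument identifying $\Exts(\widetilde{\A})$ with $\Ew(\A)$ would collapse, so the index twist is exactly the point there.
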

%The abelian group $\K^0(\mathbb{C})$ 
%is isomorphic to the integer group $\Z$. 
%Now we choose a specific isomorphism 
%$\K^0(\mathbb{C}) \cong \Z$ in the following way.
%By definition of the group $\K^0(\mathbb{C})$, we know that 
As in \cite[Section 5]{HR}, the group
$\K_*(\D(\widetilde{\mathbb{C}}))$ is naturally identified with 
$\K_*(\D(\mathbb{C})/\calK(H))$.
Since $\D(\mathbb{C})$ is $\B(H)$,
$\K^0(\mathbb{C})$
%$\K_1(\D(\widetilde{\mathbb{C}}))$
is identified with 
$\K_1(\Q(H))$.
An element of $\K_1(\Q(H))$ is the class $[U]$ of
a unitary $U \in \Q(H)$ whose Fredholm index $\Ind(U)$ takes its value in 
$\Z$. 
This gives an isomorphism
between $\K^0(\mathbb{C})$ and $\Z$.
We fix this isomorphism and identify the group
$\K^0(\mathbb{C})$ with $\Z$ to write 
$\K^0(\mathbb{C})=\Z$.
By Lemma \ref{lem:HR2.1},
%
%Under the identifications
%$\widetilde{\K}^1(\A) = \Ext_s(\A), {\K}^1(\A) =  \Ext_w(\A)$
%and$\K^0(\mathbb{C}) = \Z$,
the sequence \eqref{eq:ZSW} is rephrased as
\begin{equation*}%\label{eq:ZSW}
\K^0(\mathbb{C})
 \overset{\iota_\A}{\longrightarrow} \widetilde{\K}^1(\A)
 \overset{j_\A^1}{\longrightarrow} {\K}^1(\A)
%\longrightarrow 0
\end{equation*}
that is exact at the middle,
where the map 
$q_\A:\Es(\A) \longrightarrow \Ew(\A)$ in \eqref{eq:ZSW} 
is written as $j_\A^1$.
As $\K^0$ is a contravariant functor,
 the natural inclusion 
$\mathbb{C} \hookrightarrow \A$ %of $\mathbb{C}$ to the scalar multiples of the unit in $\A$
induces a homomorphism
$\iota_{\mathbb{C}}^*:\K^0(\A) \longrightarrow \K^0(\mathbb{C})$.
As in \cite[(5.2.8)]{HR},
the algebra $\D(\widetilde{\A})$ is identified with 
$\begin{bmatrix}
\D(\A) & \calK(H) \\
\calK(H) & \B(H)
\end{bmatrix}.
$
The inclusion of $\D(\A)$ into the upper left corner of $\D(\widetilde{\A})$
  induces a homomorphism from
  $\widetilde{\K}^0(\A)$ to $\K^0(\A)$ that we denote by $j_\A^0$.
 We then have the following K-homology long exact sequence.
\begin{lemma}[{\cite{HR}}] 
For  a separable unital $C^*$-algebra $\A$, 
the $\K$-homology  cyclic six term exact sequence
\begin{equation} \label{eq:Khomolongexact}
\begin{CD}
0 @>>> \widetilde{\K}^0(\A) @>{j_A^0}>> \K^0(\A) \\
@AAA  @.   @VV{\iota_{\mathbb{C}}^*}V  \\
 \K^1(\A)  @<{j_\A^1}<< \widetilde{\K}^1(\A) @<{\iota_\A}<< \K^0(\mathbb{C}) 
\end{CD}
\end{equation}
holds.
\end{lemma}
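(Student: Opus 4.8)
My plan is to realize the six term sequence \eqref{eq:Khomolongexact} as the K-theory cyclic six term exact sequence of a short exact sequence of $C^*$-algebras assembled from the dual algebra $\D(\A)$, and then to identify the four resulting maps with $j_\A^0,\ \iota_{\mathbb{C}}^*,\ \iota_\A$ and $j_\A^1$. Throughout I would use the defining translations $\widetilde{\K}^0(\A)=\K_1(\D(\A))$, $\widetilde{\K}^1(\A)=\K_0(\D(\A))$, $\K^0(\A)=\K_1(\D(\widetilde{\A}))$, $\K^1(\A)=\K_0(\D(\widetilde{\A}))$, together with the matrix model $\D(\widetilde{\A})=\left[\begin{smallmatrix}\D(\A)&\calK(H)\\ \calK(H)&\B(H)\end{smallmatrix}\right]$ recalled above from \cite[(5.2.8)]{HR}. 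Since a compact operator commutes with every $\rho(a)$ modulo $\calK(H)$, we have $\calK(H)\subset\D(\A)$, so that the quotient $\D(\A)/\calK(H)$ is defined.

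First I would compute $\K_*(\D(\widetilde{\A}))$ in terms of $\D(\A)/\calK(H)$. In the matrix model a direct check shows that
\[
\mathcal{J}=\begin{bmatrix}\calK(H)&\calK(H)\\ \calK(H)&\B(H)\end{bmatrix}
\]
is a closed two-sided ideal of $\D(\widetilde{\A})$, and that the map sending a matrix to its upper left entry modulo $\calK(H)$ identifies $\D(\widetilde{\A})/\mathcal{J}\cong\D(\A)/\calK(H)$. The projection $p=\left[\begin{smallmatrix}0&0\\0&1\end{smallmatrix}\right]\in\mathcal{J}$ is full in $\mathcal{J}$ with $p\mathcal{J}p\cong\B(H)$, so by invariance of K-theory under passage to a full corner $\K_*(\mathcal{J})\cong\K_*(\B(H))=0$. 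The six term exact sequence for $0\to\mathcal{J}\to\D(\widetilde{\A})\to\D(\A)/\calK(H)\to0$ then shows that the quotient map induces an isomorphism $\K_*(\D(\widetilde{\A}))\xrightarrow{\ \sim\ }\K_*(\D(\A)/\calK(H))$; in particular $\K^0(\A)\cong\K_1(\D(\A)/\calK(H))$ and $\K^1(\A)\cong\K_0(\D(\A)/\calK(H))$.

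Next I would feed this into the K-theory cyclic six term exact sequence of the Calkin-type extension
\[
0\longrightarrow\calK(H)\longrightarrow\D(\A)\xrightarrow{\ \pi\ }\D(\A)/\calK(H)\longrightarrow0 .
\]
Using $\K_0(\calK(H))=\Z$ and $\K_1(\calK(H))=0$, the cyclic sequence, cut open at the vanishing group $\K_1(\calK(H))=0$, becomes
\[
0\to\K_1(\D(\A))\xrightarrow{\pi_*}\K_1(\D(\A)/\calK(H))\xrightarrow{\partial}\Z\xrightarrow{\iota_*}\K_0(\D(\A))\xrightarrow{\pi_*}\K_0(\D(\A)/\calK(H))\to0 .
\]
Substituting the isomorphisms of the previous step and rewriting in K-homology notation, with $\Z=\K^0(\mathbb{C})$ under the Fredholm index identification $\K_0(\calK(H))\cong\K_1(\Q(H))$ fixed above, turns this verbatim into the exact sequence underlying \eqref{eq:Khomolongexact}, the two end zeros producing the injectivity of $j_\A^0$ and the surjectivity of $j_\A^1$ displayed there.

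It remains to match the four maps, which is where I expect the real work to lie, since exactness itself is automatic from the six term machinery. Composing the upper left corner inclusion $\D(\A)\hookrightarrow\D(\widetilde{\A})$ with the quotient $\D(\widetilde{\A})\to\D(\A)/\calK(H)$ equals $\pi$, so under the isomorphism of the second step the maps $\pi_*$ become the corner-induced maps, that is $j_\A^0$ in degree $0$ and $j_\A^1$ in degree $1$. For the remaining two maps one must verify that the connecting (index) map $\partial\colon\K^0(\A)=\K_1(\D(\A)/\calK(H))\to\K_0(\calK(H))=\K^0(\mathbb{C})$ coincides with the functorial pullback $\iota_{\mathbb{C}}^*$ along $\mathbb{C}\hookrightarrow\A$, and that $\iota_*\colon\K^0(\mathbb{C})=\K_0(\calK(H))\to\K_0(\D(\A))=\widetilde{\K}^1(\A)$ coincides with the homomorphism $\iota_\A$ of \eqref{eq:ZSW}. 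The first follows by naturality: pulling a class in $\K^0(\A)$ back to $\mathbb{C}$ remembers only its Fredholm index, which is exactly the datum recorded by $\partial$. The second amounts to identifying the generator of $\K_0(\calK(H))$, namely the class of a minimal projection $e\in\calK(H)\subset\D(\A)$, with the canonical generator $\iota_\A(1)$ measuring the discrepancy between strong and weak equivalence in \eqref{eq:ZSW}; this is precisely the compatibility between Paschke duality $\widetilde{\K}^1(\A)=\Es(\A)$ and the ideal inclusion $\calK(H)\hookrightarrow\D(\A)$ recorded in \cite{HR} and \cite{PP}. Unwinding these two map identifications is the main obstacle, after which the diagram \eqref{eq:Khomolongexact} is exact and commutes.
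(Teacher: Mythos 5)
The paper gives no proof of this lemma at all --- it is quoted verbatim from Higson--Roe \cite{HR} --- so there is no internal argument to compare yours against; what you have written is, in substance, the standard argument of \cite[Section 5]{HR}, and it is correct. Your key reduction is sound: the set $\mathcal{J}=\left[\begin{smallmatrix}\calK(H)&\calK(H)\\ \calK(H)&\B(H)\end{smallmatrix}\right]$ really is a closed ideal of $\D(\widetilde{\A})$ with quotient $\D(\A)/\calK(H)$, and $\K_*(\mathcal{J})=0$ (your full-corner argument is legitimate since $\mathcal{J}$ is $\sigma$-unital; alternatively, run the six term sequence for the ideal $M_2(\calK(H))\subset\mathcal{J}$ with quotient $\Q(H)$ and use that the index map $\K_1(\Q(H))\to\K_0(\calK(H))$ is an isomorphism). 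This correctly identifies $\K^p(\A)$ with $\K_{1-p}(\D(\A)/\calK(H))$ and reduces the lemma to the six term sequence of $0\to\calK(H)\to\D(\A)\to\D(\A)/\calK(H)\to 0$ cut open at $\K_1(\calK(H))=0$, with $j_\A^0$ and $j_\A^1$ falling out of the corner-inclusion description exactly as you say. The only place your write-up stops short of a complete proof is the place you yourself flag: the identifications $\partial=\iota_{\mathbb{C}}^*$ and $\iota_*=\iota_\A$. The first is indeed a naturality check on the Fredholm index. The second, however, is where the real content sits: one must unwind the Paschke isomorphism $\widetilde{\K}^1(\A)=\K_0(\D(\A))\cong\Es(\A)$ and verify that the class of a rank-one projection $e\in\calK(H)\subset\D(\A)$ goes to the strong-equivalence class of $\Ad(u_1)\circ\tau$ for a trivial extension $\tau$ and a unitary $u_1\in\Q(H)$ of index one, which is what $\iota_\A(1)$ means in \eqref{eq:ZSW}; describing this as ``the compatibility recorded in \cite{HR} and \cite{PP}'' is an appeal to the very source being proved rather than an argument. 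Since the paper itself delegates the whole lemma to \cite{HR}, this is an acceptable stopping point, but you should be aware that this last identification is the one step that is not formal.
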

For the group
$\K_0(\calK(H))$,
the correspondence from the class $[e]$ of a projection $e$ of rank one in 
$\calK(H)$ to the positive generator $1$ of the integer group $\Z$
induces an isomorphism between 
$\K_0(\calK(H))$ and $\Z$.
We fix this correspondence %, we may identify 
%the groups  $\K_0(\calK(H))$ and $\Z$,
and write $\K_0(\calK(H))=\Z$.
%The preceding identification $\K^0(\mathbb{C})=\Z$ 
%together with this identification $\K_0(\calK(H))=\Z$ allow us to write
%$\K^0(\mathbb{C}) =\K_0(\calK(H)) =\Z.$

%%%%%%%%%%%%%%%%%%%%%%%%%%%%%%%%%%%%%%%%%%
\section{Cuntz--Krieger algebras and  $\K$-homology groups}
%%%%%%%%%%%%%%%%%%%%%%%%%%%%%%%%%%%%%%
%%%%%%%%%%%%%%%%%%%%%%%%%%%%%%%%%%%%%%%%%%%%%%
\subsection{Cuntz--Krieger algebras and Toeplitz algebras}
%%%%%%%%%%%%%%%%%%%%%%%%%%%%%%%%%%%%%%%%%%%
 Let $A = [A(i,j)]_{i,j=1}^N$ be an irreducible non permutation matrix with entries in $\{0,1\}.$
 The Cuntz--Krieger algebra $\OA$ for the matrix $A$ is defined by the universal $C^*$-algebra 
 generated by partial isometries $S_1,\dots,S_N$  
 subject to the operator relations:
 \begin{equation}\label{eq:CK}
 \sum_{j=1}^N S_j S_j^* =1,\qquad S_i^* S_i = \sum_{j=1}^N A(i,j) S_j S_j^*, \quad i=1,\dots, N \quad (\cite{CK}).
 \end{equation}
 It is a separable unital nuclear simple purely infinite $C^*$-algebra
 satisfying UCT. 
Let $\mathbb{C}^N$ be the $N$-dimensional Hilbert space with orthonormal basis
$\{\xi_1,\dots,\xi_N\}$.
Let $H_0$ be the one-dimensional Hilbert space with unit vector $v_0$.
The full Fock space
$F_N$ is defined by 
the Hilbert space 
$H_0 \oplus (\oplus_{n=1}^\infty H^{\otimes n})$,
where
$H^{\otimes n}$ is the $n$-fold tensor product 
$\mathbb{C}^N \otimes\cdots \otimes\mathbb{C}^N$.
The sub Fock space $H_A$ associated to $A$ is a subspace of $F_N$
defined by the closed linear span of vectors
$$
\{v_0\} \cup \{ \xi_{k_1}\otimes\cdots\otimes\xi_{k_n} \in H^{\otimes n} \mid
A(k_j, k_{j+1}) = 1, \, j=1,\dots,n-1, \, n=1,2,\dots \}.
$$
Let us define the creation operators $T_1, \dots, T_N$ on $H_A$ by
\begin{equation*}
T_i v_0 = \xi_i, \qquad
T_i (\xi_{k_1}\otimes\cdots\otimes\xi_{k_n}) = 
\begin{cases}
\xi_i\otimes \xi_{k_1}\otimes\cdots\otimes\xi_{k_n} & \text{ if } A(i, k_1) =1, \\
0 & \text{ otherwise. }
\end{cases}
\end{equation*}  
Let us denote by $P_0$ the projection of rank one on $H_A$ onto $H_0$.
The creation operators $T_1,\dots, T_N$ are partial isometries on $H_A$ 
satisfying the operator relations:
\begin{equation}\label{eq:TA}
\sum_{j=1}^N T_j T_j^* + P_0 = 1,\qquad
T_i^* T_i = \sum_{j=1}^N A(i,j) T_j T_j^* + P_0, \quad i=1,\dots,N
\quad
(\cite{EFW1981},\cite{Ev}).
\end{equation}
The $C^*$-algebra $C^*(T_1,\dots, T_N, P_0)$ generated by 
$T_1,\dots, T_N, P_0$ is called the Toeplitz algebra for $A$, that is denoted by $\TA$.
Let 
$S_i, i=1,\dots,N$
be the generating partial isometries of $\OA$ satisfying \eqref{eq:CK}.
%that generate the Cuntz--Krieger algebra $\OA$.
The correspondence
$T_i \longrightarrow S_i \quad \text{ and } \quad P_0 \longrightarrow 0$
yields a surjective $*$-homomorphism
$\TA \longrightarrow \OA$ which is written $\pi_A$.
The kernel of the map $\pi_A: \TA \longrightarrow \OA$ 
is generated by the operators of the form
$T_{\mu_1}\cdots T_{\mu_m} P_0 T_{\nu_n}^*\cdots T_{\nu_1}^*,
\mu_1,\dots,\mu_m, \nu_1,\dots,\nu_n \in \{1,\dots,N\}
$
that generates  a $C^*$-algebra isomorphic to $\calK(H_A)$.
Hence we have an extension
\begin{equation}\label{eq:Toeplitzext}
\tau_{A}: \quad 0 \longrightarrow \calK(H_A) \longrightarrow \TA \longrightarrow \OA \longrightarrow 0
\end{equation}
called the Toeplitz extension for $\OA$.
It defines an element 
$[\TA]_s $ in $\Es(\OA)$ and 
$[\TA]_w $ in $\Ew(\OA)$, respectively.

  %%%%%%%%%%%%%%%%%%%%%%%%%%%%%%%%%

 %%%%%%%%%%%%%%%%%%%%%%%%%%%%%%%%%%%%%%%%%%
\subsection{Cyclic exact sequence of $\Ext_*(\OA)$-groups}
%%%%%%%%%%%%%%%%%%%%%%%%%%%%%%%%%%%%%%
% In \cite{CK}, Cuntz--Krieger computed the weak extension group 
 %$\Ew(\OA) $ of $\OA$ to be $ \Z^N / (I - A)\Z^N$.
%Their sterategy to compute it is the following.
Let $\sigma: \OA\longrightarrow \Q(H)$ be an extension.
Put $e_i = \sigma(S_i S_i^*), i=1,\dots N$.
Take a $*$-monomorphism $\tau_0:\OA\longrightarrow \B(H)$ 
such that $\pi(\tau_0(S_i S_i^*)) = e_i$ 
and consider the extension
$\tau = \pi\circ \tau_0:\OA\longrightarrow \Q(H)$.
Since the partial isometry $\sigma(S_i) \tau(S_i^*)$ commutes with $e_i$, 
its restriction $e_i \sigma(S_i) \tau(S_i^*)e_i$ becomes a unitary in $e_i \Q(H) e_i$,
so we may define  its Fredholm index $\ind_{e_i}\sigma(S_i) \tau(S_i^*)$ denoted by
$d_i(\sigma,\tau) \in \Z, i=1,\dots,N$.
Cuntz--Krieger in \cite{CK} proved that the correspondence
$[\sigma]_w \in \Ew(\OA)\longrightarrow [d_i(\sigma,\tau)]_{i=1}^N \in \Z^N/(I-A)\Z^N$
is well-defined and gives rise to an isomorphism that is denoted by $d_w$,
so that 
$\Ew(\OA)\cong \Z^N/(I-A)\Z^N.$

For the matrix $A$, we define an $N \times N$ matrix $\widehat{A}$ by 
$\widehat{A} = A + R_1 -AR_1$,
 where $R_1$ is the $N \times N$ matrix whose first row is $[1,\dots,1]$,
 and other rows are zero vectors.  
In \cite{MaPre2021exts}, 
it was shown that the above family 
$d_i(\sigma,\tau) \in \Z, i=1,\dots,N$
%of the indices of the Fredholm operators
%$e_i \sigma(S_i) \tau(S_i^*)e_i, i=1,\dots,N$
%defined above 
also yields a homomorphism  
$[\sigma]_s \in \Es(\OA)\longrightarrow [d_i(\sigma,\tau)]_{i=1}^N \in \Z^N/(I-\widehat{A})\Z^N$
that is actually an isomorphism, denoted by $d_s$.
 Since $I - \widehat{A} = (I - A) (I + R_1)$,
  there is a natural quotient map
 from $\Z^N / (I - \widehat{A})\Z^N$ to $\Z^N / (I - A)\Z^N$
 that is denoted by $\hat{q}_A.$

 As in \cite{MaPre2021exts}, a homomorphism 
 $\hat{\iota}_A: \Z \longrightarrow \Z^N / (I - \widehat{A})\Z^N$ is defined 
 by
 $\hat{\iota}_A(m) = [(I- A)[k_i]_{i=1}^N]$
 % \in \Z^N / (I - \widehat{A})\Z^N$
for $m = \sum_{i=1}^N k_i.$
Then the following two lemmas were shown in \cite{MaPre2021exts}.
\begin{lemma}[{\cite{MaPre2021exts}}] \label{lem:theorem3.3}
%\begin{enumerate}
%\renewcommand{\theenumi}{(\roman{enumi})}
%\renewcommand{\labelenumi}{\textup{\theenumi}}
%\item
There exist  isomorphisms
$$
d_w : \Extw(\OA)\longrightarrow \Z^N/(I-A)\Z^N,\qquad
d_s: \Exts(\OA)\longrightarrow \Z^N/(I-\widehat{A})\Z^N
$$
of groups such that the diagram 
\begin{equation}\label{eq:dsdw}
\begin{CD}
\Z @>\iota_A>> \Exts(\OA) @>q_A>> \Extw(\OA)\\
@|   @VV{d_s}V @VV{d_w}V \\
\Z @>>\hat{\iota}_A> \Z^N/ (I-\widehat{A})\Z^N @>>\hat{q}_A> \Z^N/ (I-A)\Z^N
\end{CD}
\end{equation}
commutes, and
\begin{align} 
d_w([\TA]_w) & =  - [1_N] \quad \text{ in } \quad \Z^N/ (I- {A})\Z^N,  \label{eq:dwTA} \\
d_s([\TA]_s) & = - \hat{\iota}_A(1) - [1_N] \quad { in } \quad \Z^N/ (I- \widehat{A})\Z^N, \label{eq:dsTA}
\end{align}
where $[1_N]=[(1,\dots,1)]$ means the class of the vector $(1,\dots,1) \in \Z^N.$
\end{lemma}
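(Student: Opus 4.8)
The plan is to reduce the whole statement to the analysis of a single $\Z^N$-valued index cochain and to determine exactly which sublattice of $\Z^N$ one must quotient by in the weak and in the strong setting. Given an extension $\sigma:\OA\to\Q(H)$ and a reference $\tau=\pi\circ\tau_0$ as in the paragraph preceding the lemma, set $d(\sigma):=(d_1(\sigma,\tau),\dots,d_N(\sigma,\tau))\in\Z^N$ with $d_i(\sigma,\tau)=\ind_{e_i}\sigma(S_i)\tau(S_i^*)$. Both maps $d_w,d_s$ will be this cochain read modulo two different lattices, so the heart of the matter is to pin down those lattices and to identify the class of the Toeplitz extension.

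First I would record the weak case, which is due to Cuntz--Krieger \cite{CK}. Replacing $\tau$ by another admissible reference, or replacing $\sigma$ by $\Ad(u)\circ\sigma$ for a unitary $u\in\Q(H)$, changes $d(\sigma)$ only within $(I-A)\Z^N$: the source relation $S_i^*S_i=\sum_jA(i,j)S_jS_j^*$ of \eqref{eq:CK}, together with additivity of the Fredholm index under the corner decomposition, produces precisely the column relations of $A$. Additivity of the index under the block sum $\Q(H)\oplus\Q(H)\hookrightarrow\Q(H)$ makes $d_w$ a homomorphism, surjectivity follows by building model extensions with prescribed corner indices, and injectivity by showing a vanishing class yields a weakly split extension, giving the isomorphism $d_w:\Extw(\OA)\to\Z^N/(I-A)\Z^N$.

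The new and delicate point is the strong refinement, namely that $d_s([\sigma]_s):=[d(\sigma)]$ is well defined in $\Z^N/(I-\widehat A)\Z^N$. Now only conjugations by $u=\pi(U)$ with $U\in\B(H)$ a genuine unitary (so $\Ind U=0$) are permitted, and this rigidifies the cochain: whereas weak equivalence lets $d(\sigma)$ move through all of $(I-A)\Z^N$, strong equivalence moves it only through the smaller lattice $(I-\widehat A)\Z^N=(I-A)(I+R_1)\Z^N$, the extra factor $(I+R_1)$ recording a fixed normalization of the total rank tied to $\sum_jS_jS_j^*=1$. \textbf{Verifying this is the main obstacle:} it requires simultaneous control of all the corner indices $d_i$ subject to $\sum_je_j=1$, showing that the only residual freedom beyond the weak class is the single integer measuring the relative index of the lifted unit, which is exactly what forces the asymmetric modification $\widehat A=A+R_1-AR_1$ in the first row. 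Block-sum additivity again makes $d_s$ a homomorphism.

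For the commuting diagram \eqref{eq:dsdw}, the right square is essentially tautological, since $d_w$ and $d_s$ are the class of the same cochain $d(\sigma)$ and $\hat q_A$ is the reduction $\Z^N/(I-\widehat A)\Z^N\to\Z^N/(I-A)\Z^N$. For the left square I would compute $d_s\circ\iota_A$ directly: $\iota_A(1)$ arises from the split extension by an index-one perturbation of the lifted unit, which the definition of $d_i$ deposits in a single coordinate, giving $[(I-A)(1,0,\dots,0)]=\hat\iota_A(1)$. With the top row exact at the middle by \eqref{eq:ZSW} and the bottom row exact by $I-\widehat A=(I-A)(I+R_1)$, a diagram chase deduces bijectivity of $d_s$ from that of $d_w$: surjectivity is immediate, and injectivity reduces to $\Ker\iota_A=\Ker\hat\iota_A$. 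Finally, for \eqref{eq:dwTA} and \eqref{eq:dsTA} I would evaluate $d(\sigma)$ on the Toeplitz extension, using that the creation operators furnish an explicit lift with $\sigma(S_i)=\pi(T_i)$; the defect relation $T_i^*T_i-\sum_jA(i,j)T_jT_j^*=P_0$ of \eqref{eq:TA}, with $P_0$ of rank one, contributes a uniform $-1$ to every corner index, giving $d(\sigma)\equiv-[1_N]$ modulo $(I-A)$, while the global normalization from $\sum_jT_jT_j^*+P_0=1$ supplies the additional term $-\hat\iota_A(1)$ in the finer lattice; the identity $\hat q_A\circ\hat\iota_A=0$ yields the consistency check $\hat q_A(-\hat\iota_A(1)-[1_N])=-[1_N]$.
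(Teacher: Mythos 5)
You should first note that the paper does not actually prove Lemma \ref{lem:theorem3.3}: it is imported wholesale from the preprint \cite{MaPre2021exts}, so there is no in-paper argument to measure your proposal against, only the cited one. That said, your architecture is the right one and matches the cited source in outline: a single index cochain $d(\sigma)=(d_i(\sigma,\tau))_i\in\Z^N$, read modulo $(I-A)\Z^N$ for the weak invariant and modulo the finer lattice $(I-\widehat{A})\Z^N=(I-A)(I+R_1)\Z^N$ for the strong one, with the right square of \eqref{eq:dsdw} tautological, bijectivity of $d_s$ obtained by a diagram chase from bijectivity of $d_w$ and exactness of \eqref{eq:ZSW}, and the Toeplitz classes \eqref{eq:dwTA}, \eqref{eq:dsTA} computed from the rank-one defects $P_0$ in the relations \eqref{eq:TA}.

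The genuine gap is that the one step you yourself flag as ``the main obstacle'' --- well-definedness of $d_s$ on strong equivalence classes modulo $(I-\widehat{A})\Z^N$ --- is described but never carried out. This is not a routine verification that can be waved through: it is the entire new content of the lemma beyond Cuntz--Krieger, and your justification for it (``the extra factor $(I+R_1)$ record[s] a fixed normalization of the total rank'') is a gloss on the answer, not an argument. Concretely, one must show that if $\sigma_2=\Ad(\pi(U))\circ\sigma_1$ with $U\in\B(H)$ a genuine unitary, and if the reference lifts $\tau_0$ are changed admissibly, then $d(\sigma_2)-d(\sigma_1)$ lands in $(I-A)(I+R_1)\Z^N$ and, conversely, that every element of that lattice is realized; the asymmetric first-row modification in $\widehat{A}=A+R_1-AR_1$ has to come out of an actual index bookkeeping across the corners $e_i$ with $\sum_j e_j=1$, not be read off from the desired conclusion. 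Two smaller but still real gaps sit downstream of this: the identity $\Ker\iota_A=\Ker\hat{\iota}_A$, to which you correctly reduce injectivity of $d_s$ but which you do not establish (it requires knowing the image of $\iota_A$ in $\Es(\OA)$, e.g.\ via the position of the unit), and the left square $d_s\circ\iota_A=\hat{\iota}_A$, where the claim that an index-$m$ perturbation of the lifted unit ``deposits'' the integer $m$ in a single coordinate as $[(I-A)(m,0,\dots,0)]$ is exactly the computation that needs doing (together with the check that redistributing $m$ among the coordinates does not change the class modulo $(I-\widehat{A})\Z^N$). Until those are supplied, only the weak half of the statement --- the Cuntz--Krieger part you cite --- is actually proved.
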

Let us denote by
$\Ker(I -A), \Ker(I - \widehat{A})$
the subgroups of $\Z^N$ defined by
the kernels in $\Z^N$ of the matrices $I-A$ 
and of $I - \widehat{A},$
respectively.
%In \cite{MaPre2021exts}, the homomorphisms 
Let
$$
i_1: \Z\longrightarrow \Ker(I - \widehat{A}),\qquad
j_A: \Ker(I - \widehat{A})  \longrightarrow \Ker(I - A),\qquad
s_A: \Ker(I - A)\longrightarrow \Z
$$
be homomorphisms of abelian groups
defined by 
\begin{equation}\label{eq:ijs}
i_1( m ) = (m,0,\dots,0),
\quad
j_A(
[l_i]_{i=1}^N)
=(
- \sum_{i=2}^N l_i, l_2, \dots, l_N),
\quad
s_A(
[l_i]_{i=1}^N ) 
=
\sum_{i=1}^N l_i.
\end{equation}
Since $j_A \circ i_1 =0$, the map
$j_A$ induces a homomorphism
$\Ker(I-\widehat{A})/{i_1(\Z)}\longrightarrow \Ker(I-A)$
that is still denoted by $j_A$.
\begin{lemma}[{\cite{MaPre2021exts}}]
We have the following cyclic six term exact sequence.
\begin{equation}\label{eq:6termA}
\begin{CD}
0 @>>> \Ker(I-\widehat{A})/{i_1(\Z)} @>j_A>> \Ker(I-A) \\
@AAA  @.   @VV{s_A}V  \\
   \Z^N/ (I-A)\Z^N  @<\hat{q}_A<< \Z^N/ (I-\widehat{A})\Z^N @<\hat{\iota}_A<< \Z 
\end{CD}
\end{equation}
\end{lemma}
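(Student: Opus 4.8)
The plan is to prove \eqref{eq:6termA} by elementary arithmetic in $\Z^N$, treating the hexagon as the exact five term sequence it really is. The two zeros in the diagram encode nothing more than the injectivity of $j_A$ and the surjectivity of $\hat{q}_A$ (the connecting map between $\Z^N/(I-A)\Z^N$ and $\Ker(I-\widehat{A})/i_1(\Z)$ factors through $0$), so it suffices to establish exactness of
\begin{equation*}
0 \longrightarrow \Ker(I-\widehat{A})/i_1(\Z) \overset{j_A}{\longrightarrow} \Ker(I-A) \overset{s_A}{\longrightarrow} \Z \overset{\hat{\iota}_A}{\longrightarrow} \Z^N/(I-\widehat{A})\Z^N \overset{\hat{q}_A}{\longrightarrow} \Z^N/(I-A)\Z^N \longrightarrow 0 .
\end{equation*}
The whole argument rests on a few bookkeeping identities for $R_1$. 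Writing $\varepsilon(v)=\sum_{i=1}^N v_i$ for the coordinate-sum, one has $R_1 v = \varepsilon(v)\,e_1$, hence $(I-R_1)\Z^N = \Ker\varepsilon = \{v\in\Z^N : \varepsilon(v)=0\}$ and $(I-R_1)e_1=0$. Since $\widehat{A}=A+R_1-AR_1$, one computes $I-\widehat{A} = (I-A)(I-R_1)$, which yields $(I-\widehat{A})\Z^N\subseteq (I-A)\Z^N$ (so that $\hat{q}_A$ is a well defined surjection) and $e_1\in\Ker(I-\widehat{A})$ (so that $i_1(\Z)=\Z e_1$ lies in $\Ker(I-\widehat{A})$). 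The point of departure is that $j_A$ is the restriction of $I-R_1$ and $s_A$ the restriction of $\varepsilon$.

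At the kernel end everything is forced by the factorization. The map $j_A$ lands in $\Ker(I-A)$ because $(I-A)\,j_A(v)=(I-A)(I-R_1)v=(I-\widehat{A})v=0$, and its kernel is $\{v:(I-R_1)v=0\}=\Z e_1=i_1(\Z)$, giving injectivity of the induced map. For exactness at $\Ker(I-A)$ I would show $\Ker s_A=\Im j_A$: the inclusion $\Im j_A\subseteq\Ker s_A$ is $\varepsilon\circ(I-R_1)=0$, while if $u\in\Ker(I-A)$ has $\varepsilon(u)=0$ then $R_1u=0$, so $u\in\Ker(I-\widehat{A})$ and $j_A(u)=(I-R_1)u=u$, placing $u$ in $\Im j_A$.

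The two remaining spots carry the actual content, and both follow from a single observation: the class $\hat{\iota}_A(m)=[(I-A)w]$ (for any $w$ with $\varepsilon(w)=m$) vanishes in $\Z^N/(I-\widehat{A})\Z^N$ exactly when $(I-A)w\in (I-A)(I-R_1)\Z^N$, i.e. when $w\in\Ker\varepsilon+\Ker(I-A)$ (a condition independent of the chosen $w$). For exactness at $\Z$ one reads off that $m\in\Ker\hat{\iota}_A$ iff $w=w_0+k$ with $\varepsilon(w_0)=0$ and $k\in\Ker(I-A)$, equivalently $m=\varepsilon(k)=s_A(k)$, so $\Ker\hat{\iota}_A=\Im s_A$. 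For exactness at $\Z^N/(I-\widehat{A})\Z^N$, the choice $w=m e_1$ gives $\Im\hat{\iota}_A=\Z\cdot[(I-A)e_1]$; conversely, for $x\in\Z^N$ the choice $m=\varepsilon(x)$ puts $x-m e_1$ in $\Ker\varepsilon=(I-R_1)\Z^N$, whence $[(I-A)x]=\hat{\iota}_A(\varepsilon(x))$, proving $\Ker\hat{q}_A=\Im\hat{\iota}_A$. Surjectivity of $\hat{q}_A$ is immediate.

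I expect the main obstacle to be purely organizational: keeping straight the interplay among the sum-zero sublattice $\Ker\varepsilon=(I-R_1)\Z^N$, the kernel $\Ker(I-A)$, and the image lattice $(I-A)\Z^N$, since this is precisely what governs the two middle exactness verifications. Notably, no irreducibility or aperiodicity of $A$ enters; the sequence is a formal consequence of $I-\widehat{A}=(I-A)(I-R_1)$. As a remark, \eqref{eq:6termA} is then the faithful image of the K-homology six term sequence \eqref{eq:Khomolongexact} for $\OA$ under the isomorphisms $d_s,d_w$ of \eqref{eq:dsdw}, under the further identifications $\widetilde{\K}^0(\OA)\cong\Ker(I-\widehat{A})/i_1(\Z)$ and $\K^0(\OA)\cong\Ker(I-A)$ carrying $\iota_{\mathbb{C}}^*$ to $s_A$.
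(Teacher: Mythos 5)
Your proof is correct, and every step checks out against the definitions in the text: $\Ker(I-R_1)=\Z e_1=i_1(\Z)$, $(I-R_1)\Z^N=\Ker\varepsilon$, $j_A=(I-R_1)|_{\Ker(I-\widehat{A})}$, $s_A=\varepsilon|_{\Ker(I-A)}$, and the five exactness verifications all go through as you describe. There is no in-text proof to compare against --- the paper states this lemma as quoted from \cite{MaPre2021exts} without argument --- so your self-contained elementary derivation is a genuine addition rather than a rederivation; it also makes transparent that irreducibility of $A$ plays no role here. One point deserves emphasis: the identity your whole argument hangs on is $I-\widehat{A}=(I-A)(I-R_1)$, which is what actually follows from the stated definition $\widehat{A}=A+R_1-AR_1$, whereas the paper's parenthetical remark before the lemma asserts $I-\widehat{A}=(I-A)(I+R_1)$. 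The latter is a sign slip (it would require $AR_1=R_1$, i.e.\ $Ae_1=e_1$), and your choice of sign is the internally consistent one: it is exactly what makes $j_A$ in \eqref{eq:ijs} equal to $I-R_1$, makes $\hat{\iota}_A$ well defined via $\Ker\varepsilon=(I-R_1)\Z^N$, and makes $\hat{q}_A$ a quotient map. Your closing remark correctly situates \eqref{eq:6termA} as the concrete model of \eqref{eq:Khomolongexact} realized later in \eqref{eq:Khomocyclic}.
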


%%%%%%%%%%%%%%%%%%%%%%%%%%%%%%%%%%%%%%%%%%%%%%%%%%%%%%%
%%%%%%%%%%%%%%%%%%%%%%%%%%%%%%%%%%%%%%%%%%
\subsection{K-homology long exact sequence for $\OA$}
%%%%%%%%%%%%%%%%%%%%%%%%%%%%%%%%%%%%%%
In this subsection, we will compute the K-homology cyclic six term exact sequence 
\eqref{eq:Khomolongexact} for Cuntz--Krieger algebra $\OA$.
%Now let us consider the Cuntz--Krieger algebra $\OA$
%for an irreducible non permutation matrix $A = [A(i,j)]_{i,j=1}^N$
%with entries in $\{0,1\}.$
The following lemma is an easy consequence of basic properties of K-theory of Cuntz--Krieger algebras.
 \begin{lemma}\label{lem:phi0K}
There exists an isomorphism $\phi^0: \K^0(\OA) \longrightarrow \Ker(I-A) $
such that 
the diagram 
\begin{equation}\label{eq:phizeroOAKer}
\begin{CD}
 \K^0(\OA) @>{\phi^0}>>  \Ker(I-A) \\
 @V{\iota^*_{\mathbb{C}}}VV   @V{s_A}VV \\
 \K^0(\mathbb{C}) @ = \Z 
 \end{CD}
 \end{equation}
 commutes,
  where $\iota^*_{\mathbb{C}}: \K^0(\OA) \longrightarrow \K^0(\mathbb{C})$
 is the homomorphism induced by the inclusion $\iota_{\mathbb{C}}: \mathbb{C}\hookrightarrow \OA$,
 and $s_A: \Ker(I -A) \longrightarrow \Z$ is the homomorphism defined in \eqref{eq:ijs}.
 %by  $s_A([l_i]_{i=1}^N) = \sum_{i=1}^N l_i.$ 
\end{lemma}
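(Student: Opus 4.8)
The plan is to compute $\K^0(\OA)$ via the universal coefficient theorem, starting from Cuntz's computation of the ordinary K-theory of $\OA$. Recall that $\OA$ is separable, nuclear and satisfies the UCT, that $\K_0(\OA)\cong \Z^N/(I-A^t)\Z^N$ with the class $[1_{\OA}]$ of the unit corresponding to $[1_N]$ (because $1_{\OA}=\sum_{j=1}^N S_jS_j^*$ and each $[S_jS_j^*]$ is the image of the standard basis vector $e_j$ under the canonical surjection $\Z^N\to\K_0(\OA)$), and that $\K_1(\OA)\cong \Ker(I-A^t)\subset\Z^N$ is free abelian. Correspondingly, the inclusion $\iota_{\mathbb{C}}:\mathbb{C}\hookrightarrow\OA$ induces $\iota_{\mathbb{C}*}:\K_0(\mathbb{C})=\Z\to\K_0(\OA)$ sending the generator $1$ to $[1_{\OA}]=[1_N]$.

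Since $\K_1(\OA)$ is free, the UCT sequence
\[
0\longrightarrow \Ext^1_\Z(\K_1(\OA),\Z)\longrightarrow \K^0(\OA)\longrightarrow \Hom(\K_0(\OA),\Z)\longrightarrow 0
\]
has vanishing $\Ext$-term, so the evaluation map is an isomorphism $\K^0(\OA)\cong \Hom(\K_0(\OA),\Z)$. Next I would identify $\Hom(\Z^N/(I-A^t)\Z^N,\Z)$ with $\Ker(I-A)$: a homomorphism $\Z^N\to\Z$ is pairing against a vector $v\in\Z^N$, and it descends to the quotient precisely when $\langle v,(I-A^t)w\rangle=\langle (I-A)v,w\rangle=0$ for all $w\in\Z^N$, i.e. when $v\in\Ker(I-A)$. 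Composing these two isomorphisms defines the desired $\phi^0:\K^0(\OA)\longrightarrow\Ker(I-A)$.

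To verify the square I would invoke naturality of the UCT with respect to $\iota_{\mathbb{C}}$. Under the evaluation identification, $\iota^*_{\mathbb{C}}:\K^0(\OA)\to\K^0(\mathbb{C})$ becomes the dual of $\iota_{\mathbb{C}*}$, namely $\psi\mapsto\psi(\iota_{\mathbb{C}*}(1))=\psi([1_N])$. Writing $\psi$ as pairing against $v=\phi^0(\psi)\in\Ker(I-A)$ gives $\psi([1_N])=\langle v,1_N\rangle=\sum_{i=1}^N v_i=s_A(v)$, which is exactly the asserted commutativity $\iota^*_{\mathbb{C}}=s_A\circ\phi^0$.

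The one point requiring care — and the main (though mild) obstacle — is the bookkeeping of the identification $\K^0(\mathbb{C})=\Z$. In Section~2 this was fixed via the Fredholm index on $\K_1(\Q(H))$, whereas the argument above produces $\K^0(\mathbb{C})\cong\Hom(\K_0(\mathbb{C}),\Z)=\Z$ through the index pairing. I would check that these two identifications coincide, using that the index pairing of a Fredholm module against $[1]\in\K_0(\mathbb{C})$ is exactly its Fredholm index, so that no spurious sign is introduced. Should a sign nonetheless arise from differing conventions, it is harmless: replacing $\phi^0$ by $-\phi^0$ restores commutativity (as $s_A$ is linear), and the existence statement of the lemma is unaffected.
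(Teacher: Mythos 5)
Your proof is correct. The paper offers no argument for this lemma beyond the remark that it is ``an easy consequence of basic properties of K-theory of Cuntz--Krieger algebras,'' and your route --- the UCT with vanishing $\Ext$-term because $\K_1(\OA)\cong\Ker(I-A^t)$ is free, the identification $\Hom(\Z^N/(I-A^t)\Z^N,\Z)\cong\Ker(I-A)$ by pairing against vectors annihilated by $I-A$, and naturality applied to $\iota_{\mathbb{C}*}(1)=[1_{\OA}]=[1_N]$ --- is precisely the standard way to supply the missing details, consistent with the conventions the paper uses elsewhere (e.g.\ Lemma \ref{lem:etaTO}). Your handling of the possible sign discrepancy between the Fredholm-index identification $\K^0(\mathbb{C})=\Z$ fixed in Section~2 and the one coming from $\Hom(\K_0(\mathbb{C}),\Z)$ is also adequate, since the lemma only asserts the existence of some isomorphism $\phi^0$ with $s_A\circ\phi^0=\iota_{\mathbb{C}}^*$ and replacing $\phi^0$ by $-\phi^0$ costs nothing.
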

Let us denote by 
$j_A^0: \widetilde{\K}^0(\OA) \longrightarrow \K^0(\OA)$
the injective homomorphism
$j_{\OA}^0: \widetilde{\K}^0(\OA) \longrightarrow \K^0(\OA)$
in the upper horizontal arrow in \eqref{eq:Khomolongexact} for $\A = \OA$.
As 
%rphisms $j_{\OA}^0: \widetilde{\K}^0(\OA) \longrightarrow \K^0(\OA)$and
$j_A:\Ker(I-\widehat{A})/{i_1(\Z)} \longrightarrow \Ker(I-A) $
is  injective, we have the following lemma.
\begin{lemma}\label{lem:tildephi0}
There exists a unique isomorphism 
$\widetilde{\phi}^0: \widetilde{\K}^0(\OA) \longrightarrow \Ker(I-\widehat{A})/{i_1(\Z)}$
such that 
the diagram 
\begin{equation}\label{eq:3.2}
\begin{CD}
 %0 @. 0 \\
 %@VVV   @VVV \\
 \widetilde{\K}^0(\OA) @>{\widetilde{\phi}^0}>> \Ker(I-\widehat{A})/{i_1(\Z)} \\
 @V{j_A^0}VV   @VV{j_A}V \\
 \K^0(\OA) @>{\phi^0}>>  \Ker(I-A) 
 \end{CD}
 \end{equation}
commutes.
\end{lemma}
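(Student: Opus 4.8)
The plan is to realize $\widetilde{\phi}^0$ as the map induced on kernels (equivalently, images) by the already-constructed isomorphism $\phi^0$, exploiting the fact that the horizontal maps $j_A^0$ and $j_A$ are precisely the inclusions of the relevant kernels into $\K^0(\OA)$ and $\Ker(I-A)$. Concretely, the $\K$-homology six term sequence \eqref{eq:Khomolongexact} is exact at $\K^0(\OA)$, so $\Im(j_A^0) = \Ker(\iota_{\mathbb{C}}^*)$; and the $\Ext$ six term sequence \eqref{eq:6termA} is exact at $\Ker(I-A)$, so $\Im(j_A) = \Ker(s_A)$. The bridge between the two is Lemma \ref{lem:phi0K}: the commuting square \eqref{eq:phizeroOAKer} says exactly that $s_A \circ \phi^0 = \iota_{\mathbb{C}}^*$ as maps into $\K^0(\mathbb{C}) = \Z$.

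First I would verify that $\phi^0$ carries $\Im(j_A^0)$ onto $\Im(j_A)$. Since $\phi^0$ is an isomorphism and $s_A \circ \phi^0 = \iota_{\mathbb{C}}^*$, an element $x \in \K^0(\OA)$ satisfies $\iota_{\mathbb{C}}^*(x)=0$ if and only if $s_A(\phi^0(x)) = 0$; hence $\phi^0(\Ker \iota_{\mathbb{C}}^*) = \Ker s_A$. Combining this with the two exactness statements yields $\phi^0(\Im j_A^0) = \Im j_A$. In particular $\phi^0 \circ j_A^0$ takes values in $\Im(j_A)$, and because $j_A$ is injective there is a well-defined homomorphism $\widetilde{\phi}^0 := j_A^{-1}\circ \phi^0 \circ j_A^0 \colon \widetilde{\K}^0(\OA) \longrightarrow \Ker(I-\widehat{A})/i_1(\Z)$, which is precisely the map making \eqref{eq:3.2} commute. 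Uniqueness is immediate: any two such maps agree after composing with the injection $j_A$, hence agree.

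It then remains to see that $\widetilde{\phi}^0$ is an isomorphism. Injectivity follows because $j_A \circ \widetilde{\phi}^0 = \phi^0 \circ j_A^0$ is a composition of injective maps ($j_A^0$ injective by hypothesis and $\phi^0$ an isomorphism), so $\widetilde{\phi}^0$ is injective. Surjectivity follows from the image computation: $\phi^0(\Im j_A^0) = \Im j_A$ means that for every element of $\Ker(I-\widehat{A})/i_1(\Z)$ its image under $j_A$ lies in $\phi^0(\Im j_A^0) = j_A(\Im \widetilde{\phi}^0)$, whence by injectivity of $j_A$ the element itself already lies in $\Im \widetilde{\phi}^0$.

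I expect no serious obstacle here: the argument is essentially a restriction-to-kernels version of the five lemma, and all the genuine content sits in the earlier results—the exactness of the two cyclic six term sequences and, above all, the compatibility $s_A\circ\phi^0=\iota_{\mathbb{C}}^*$ recorded in Lemma \ref{lem:phi0K}. The only points demanding care are bookkeeping: keeping straight the directions of $j_A^0$, $j_A$, $s_A$, $\iota_{\mathbb{C}}^*$, and using the identification $\K^0(\mathbb{C})=\Z$ consistently, so that $s_A\circ\phi^0=\iota_{\mathbb{C}}^*$ is read as an identity of homomorphisms into $\Z$.
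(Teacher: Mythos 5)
Your proposal is correct and is exactly the argument the paper has in mind: the paper states the lemma as an immediate consequence of the injectivity of $j_A$, and your construction $\widetilde{\phi}^0 = j_A^{-1}\circ\phi^0\circ j_A^0$ — justified via exactness of \eqref{eq:Khomolongexact} at $\K^0(\OA)$, exactness of \eqref{eq:6termA} at $\Ker(I-A)$, and the identity $s_A\circ\phi^0=\iota_{\mathbb{C}}^*$ from Lemma \ref{lem:phi0K} — is precisely the omitted verification. No gaps.
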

%\begin{proof}
%In the diagram \eqref{eq:3.2},
%the two vertical sequences are exact 
%by the commutative diagrams \eqref{eq:Khomolongexact} and \eqref{eq:6termA},
%and  the isomorphism
%$\phi^0 : \K^0(\OA) \longrightarrow \Ker(I-A)$ is already defined
%in Lemma \ref{lem:phi0K}.
%Hence 
%there uniquely exists an isomorphism
%$\widetilde{\phi}^0:\widetilde{\K}^0(\OA) \longrightarrow \Ker(I-\widehat{A})/{i_1(\Z)}$
%which yields the desired commutative diagram.
%\end{proof}
Let us denote by 
$\iota_A: \K^0(\mathbb{C}) (=\Z) \longrightarrow \widetilde{\K}^1(\OA)$
the homomorphism
 $\iota_{\OA}: \K^0(\mathbb{C}) \longrightarrow \widetilde{\K}^1(\OA)$
in the lower horizontal arrow in \eqref{eq:Khomolongexact} for $\A = \OA$.
Since $\widetilde{\K}^1(\OA)= \Ext_s(\OA)$ 
and 
$
\K^1(\OA)=\Ext_w(\OA),
$
by putting 
$$
\tilde{\phi}^1 = d_s : \widetilde{\K}^1(\OA) \longrightarrow \Z^N/ (I-\widehat{A})\Z^N, \qquad
\phi^1 = d_w : \K^1(\OA) \longrightarrow \Z^N/ (I-A)\Z^N, 
$$
 we have the following proposition.
\begin{proposition}\label{prop:Khomocyclic}
The $\K$-homology  cyclic six term exact sequence \eqref{eq:Khomolongexact}
for   $\OA$ 
\begin{equation}\label{eq:sixtermOA}
\begin{CD}
0 @>>> \widetilde{\K}^0(\OA) @>{j_A^0}>> \K^0(\OA) \\
@AAA  @.   @VV{\iota_{\mathbb{C}}^*}V  \\
 \K^1(\OA)  @<{j_A^1}<< \widetilde{\K}^1(\OA) @<{\iota}_A<< \K^0(\mathbb{C}) 
\end{CD}
\end{equation}
is computed to be the cyclic six term exact sequence \eqref{eq:6termA},
such that there is a commutative diagram
\begin{equation}\label{eq:Khomocyclic}
\begin{CD}
 0 @. 0 \\
 @VVV   @VVV \\
 \widetilde{\K}^0(\OA) @>{\widetilde{\phi}^0}>> \Ker(I-\widehat{A})/{i_1(\Z)} \\
 @V{j_A^0}VV   @VV{j_A}V \\
 \K^0(\OA)  @>{\phi^0}>>  \Ker(I-A) \\
 @V{\iota_{\mathbb{C}}^*}VV   @VV{s_A}V \\
 \K^0(\mathbb{C}) @= \Z \\
 @V{\iota_A}VV   @VV{\hat{\iota}_A}V \\
  \widetilde{\K}^1(\OA) @>{\widetilde{\phi}^1}>>   \Z^N/ (I-\widehat{A})\Z^N  \\
 @V{j_A^1}VV   @VV{\hat{q}_A}V \\
  {\K}^1(\OA) @>{\phi^1}>>\Z^N/ (I-A)\Z^N  \\
 @VVV   @VVV \\
  0 @. 0
 \end{CD}
 \end{equation}
 satisfying
 $\widetilde{\phi}^1([\T_A]_s) = -\hat{\iota}_A(1) - [1_N],$
 where
 $[\T_A]_s \in \widetilde{\K}^1(\OA)$ is the strong equivalence class of the Toeplitz extension 
 %which is regarded as an element of $\widetilde{K}^1(\OA)$
 %under the identification between $\Ext_s(\OA)$ and  $\widetilde{K}^1(\OA)$, 
 and $[1_N] =[(1,\dots,1)]$ is the class of the vector
 $(1,\dots,1)$ in $\Z^N$.
\end{proposition}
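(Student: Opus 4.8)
The plan is to recognize \eqref{eq:sixtermOA} as the instance $\A=\OA$ of the general K-homology six term sequence \eqref{eq:Khomolongexact}, to feed in the four vertical isomorphisms already produced by the preceding lemmas, and then to check square by square that the resulting ladder \eqref{eq:Khomocyclic} commutes. First I would record the identifications: by Lemma~\ref{lem:HR2.1} and the conventions fixed in Section~2, one has $\widetilde{\K}^1(\OA)=\Exts(\OA)$ and $\K^1(\OA)=\Extw(\OA)$, the boundary map $\iota_A$ of \eqref{eq:Khomolongexact} is the extension-theoretic map $\iota_{\OA}$ of \eqref{eq:ZSW}, and $j_A^1=q_A$. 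The four horizontal isomorphisms are then $\widetilde{\phi}^0$ (Lemma~\ref{lem:tildephi0}), $\phi^0$ (Lemma~\ref{lem:phi0K}), the identity on $\K^0(\mathbb{C})=\Z$, and $\widetilde{\phi}^1=d_s$, $\phi^1=d_w$ (Lemma~\ref{lem:theorem3.3}).

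With these in place the commutativity is a matter of matching each square of \eqref{eq:Khomocyclic} against an already established diagram. The square relating $\widetilde{\K}^0(\OA)$ to $\K^0(\OA)$ is precisely \eqref{eq:3.2}, and the square relating $\K^0(\OA)$ to $\K^0(\mathbb{C})$ is precisely \eqref{eq:phizeroOAKer}; these require no further argument. For the two lower squares I would invoke \eqref{eq:dsdw}: its left half reads $d_s\circ\iota_{\OA}=\hat{\iota}_A$, which under $\iota_A=\iota_{\OA}$ and $\widetilde{\phi}^1=d_s$ is exactly the square relating $\K^0(\mathbb{C})$ to $\widetilde{\K}^1(\OA)$; its right half reads $\hat{q}_A\circ d_s=d_w\circ q_A$, which under $j_A^1=q_A$, $\widetilde{\phi}^1=d_s$ and $\phi^1=d_w$ is exactly the square relating $\widetilde{\K}^1(\OA)$ to $\K^1(\OA)$. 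Because all four vertical maps are isomorphisms and the ladder commutes, the exact sequence \eqref{eq:sixtermOA} is identified with the exact sequence \eqref{eq:6termA}, as claimed. The statement about the Toeplitz extension is then immediate: since $\widetilde{\phi}^1=d_s$, formula \eqref{eq:dsTA} gives $\widetilde{\phi}^1([\TA]_s)=d_s([\TA]_s)=-\hat{\iota}_A(1)-[1_N]$ in $\Z^N/(I-\widehat{A})\Z^N$.

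The work that genuinely has to be done lies not in the diagram chase, which is routine once the pieces are aligned, but in the identification used at the outset that the Higson--Roe boundary map $\iota_A:\K^0(\mathbb{C})\to\widetilde{\K}^1(\OA)$ coincides with the Pimsner--Popa / extension-theoretic map $\iota_{\OA}:\Z\to\Exts(\OA)$ of \eqref{eq:ZSW}, under the chosen identification $\K^0(\mathbb{C})=\Z$ by Fredholm index. This compatibility of normalizations---traced back to the corner inclusion $\D(\OA)\hookrightarrow\D(\widetilde{\OA})$ defining \eqref{eq:Khomolongexact}---is what makes the sign and the generator $1$ in $\hat{\iota}_A$ agree with those in \eqref{eq:dsTA}, so that the third square commutes on the nose rather than merely up to sign. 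I would therefore devote the care of the proof to pinning down this identification (citing Section~2 and \cite{HR}, \cite{PP}), after which the remaining squares assemble formally.
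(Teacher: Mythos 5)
Your proposal is correct and follows essentially the same route as the paper: the proof there simply assembles Lemmas \ref{lem:theorem3.3}, \ref{lem:phi0K} and \ref{lem:tildephi0} into the ladder \eqref{eq:Khomocyclic} and then quotes \eqref{eq:dsTA} for the value of $\widetilde{\phi}^1([\TA]_s)$. The only difference is that you make explicit the identification of the Higson--Roe boundary map with the extension-theoretic map $\iota_{\OA}$ of \eqref{eq:ZSW}, which the paper treats as already fixed in Section~2.
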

\begin{proof}
By  Lemma \ref{lem:theorem3.3}, Lemma \ref{lem:phi0K}
and 
Lemma \ref{lem:tildephi0},
we have the commutative diagram \eqref{eq:Khomocyclic}.
By \eqref{eq:dsTA}, we know that 
$\tilde{\phi}^1([\TA]_s) = -\hat{\iota}_A(1) - [1_N].$
\end{proof}
%%%%%%%%%%%%%%%%%%%%%%%%%%%%%%%%%%%%%%%%%%%%%%%%%%%
%{\scriptsize
%\begin{equation*}
%\begin{CD}
% 0 @>>> \widetilde{\K}^0(\OA) @>>> \K^0(\OA) @>>> \K^0(\mathbb{C}) @>>>\widetilde{\K}^1(\OA) @>>> {\K}^1(\OA) @>>> 0  \\
% @. @VV{\widetilde{\Phi}^0}V  @VV{\Phi^0}V @|   @VV{\widetilde{\Phi}^1}V  @VV{\Phi^1}V  @.  \\
%0 @>>>\Ker(I-\widehat{A})/{i_1(\Z)} @>>> \Ker(I-A) @>>> \Z @>>> \Z^N/ (I-\widehat{A})\Z^N  @>>> \Z^N/ (I-A)\Z^N  @>>> 0  \\
%\end{CD}
%\end{equation*}
%}
%%%%%%%%%%%

%%%%%%%%%%%%%%%%%%%%%%%%%%%%%%%%%%%%%%%%%%%%%%%%%
%%%%%%
%%%%%%%%%%%%%%%%%%%%%%%%%%%%%%%%%%%%
\section{K-theory for Toeplitz algebra $\TAT$ }
%%%%%%%%%%%%%%%%%%%%%%%%%%%%%%%%%%%%%%
%\begin{equation*}
%\newcommand{\bg}{%
%\family{cmr}\size{20}{12pt}\selectfont}
%\newcommand{\bigAt}{\smash{\hbox{\bg A^t}}}
%\newcommand{\bigAt}{%
%\smash{\lower1.7ex\hbox{\bg A^t}}}
%\begin{bmatrix}
%0 & \dots & 0 \\
%1 & \bigAt & 
%\end{bmatrix}
%\end{equation*}
In this section, we will compute the K-theory cyclic six term exact sequence for the Toeplitz 
extension \eqref{eq:Toeplitzext} for the transposed matrix $A^t$ of $A$.
Let us consider the Toeplitz algebra $\TAT$  for the transposed matrix $A^t$.
Its generating partial isometries and projection are denoted by
$t_1,\dots, t_N$ and $p_0$ that satisfy 
 the operator relations
\begin{equation}\label{eq:TATrelation}
\sum_{j=1}^N t_j t_j^*  + p_0 = 1,\qquad 
t_i^* t_i = \sum_{j=1}^N A^t(i,j) t_j t_j^* + p_0
\quad \text{ for } i=1,\dots,N \quad (\text{see } \eqref{eq:TA}).
\end{equation}
Let $A_{\T}^t$ be the $(N+1) \times (N+1)$ matrix  defined by 
$
A_{\T}^t
=
\left[
\begin{smallmatrix}
0      &  & \cdots & 0      \\
1      &  &       &        \\ 
\vdots &  & A^t   &        \\ 
1      &  &       &  
\end{smallmatrix}
\right].
$
%\begin{equation*}
%$
%\end{equation*}
%
%\begin{bmatrix}
%0      & 0      & 0      & 0      & 0      \\
%1      & A(1,1) & A(2,1) & \cdots & A(N,1) \\ 
%1      & A(1,2) & A(2,2) & \cdots & A(N,2) \\ 
%\vdots & \vdots &        &        & \vdots \\ 
%1      & A(1,N) & A(2,N) & \cdots & A(N,N) 
%\end{bmatrix}.
%
The $C^*$-algbera $\T_{A^t}$ 
is then regarded as the graph algebra $C^*(G_{A_{\T}^t})$
for the directed graph $G_{A_{\T}^t}$
associated to the matrix 
$A_{\T}^t$.
The graph is obtained by adjoining a sink to the original graph 
$G_{A^t}$ for the matrix $A^t$.
Let $A_1$ be the $(N+1)\times (N+1)$ matrix defined by
$
A_1
=\left[
\begin{smallmatrix}
1     & 1  & \cdots   & 1 \\
0     &    &          & \\
\vdots&    & A        &  \\
0          &          &        
\end{smallmatrix}
\right].
$
The general K-theory formula due to Raeburn--Szyma\'nski \cite{RS} for graph $C^*$-algebras tells us 
the following lemma.
\begin{lemma}[{Raeburn--Szyma\'nski \cite{RS}}]\label{lem:RS2}
Keep the above notation.\hspace{4cm}
 \begin{enumerate}
\renewcommand{\theenumi}{(\roman{enumi})}
\renewcommand{\labelenumi}{\textup{\theenumi}}
\item
$ \K_0(\T_{A^t})
=\Z^{N+1}/(I - A_1)\Z^{N+1}.$
\item
$\K_1(\T_{A^t})
= \Ker(s_A: \Ker(I-A) \longrightarrow \Z).$
\end{enumerate}
\end{lemma}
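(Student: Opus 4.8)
The plan is to invoke the Raeburn--Szyma\'nski K-theory computation for the graph $C^*$-algebra $\TAT = C^*(G_{A_{\T}^t})$ and then identify the resulting maps with the data $A_1$ and $s_A$. First I would recall the exact sequence underlying their formula: for a finite graph $E$ with vertex matrix $A_E$, writing $E^0_{\mathrm{reg}}$ for the set of vertices that emit at least one edge, one has
\begin{equation*}
0 \longrightarrow \K_1(C^*(E)) \longrightarrow \Z^{E^0_{\mathrm{reg}}}
\overset{I - A_E^t}{\longrightarrow} \Z^{E^0}
\longrightarrow \K_0(C^*(E)) \longrightarrow 0,
\end{equation*}
where $I - A_E^t$ denotes the homomorphism determined by the columns of $I - A_E^t$ indexed by the regular vertices. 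The restriction to regular vertices is forced, since the only relations among the classes $[p_v]$ in $\K_0$ come from the Cuntz--Krieger relation $[p_v] = \sum_w A_E(v,w)[p_w]$, imposed precisely at the edge-emitting vertices.

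For $E = G_{A_{\T}^t}$ the vertex set is $\{0,1,\dots,N\}$, with $0$ the adjoined sink and $1,\dots,N$ the regular vertices, each emitting the edge into $0$ recorded by the first column of $A_{\T}^t$. Thus I would form $I - (A_{\T}^t)^t$ and keep only the columns indexed by $1,\dots,N$, obtaining the homomorphism $\phi\colon \Z^N \to \Z^{N+1}$, $\phi(x) = \bigl(-\sum_{j=1}^N x_j,\ (I-A)x\bigr)$; equivalently $\phi$ is the $(N+1)\times N$ matrix with top row $(-1,\dots,-1)$ and lower block $I-A$. The two assertions are then read off from $\phi$. For (ii), $\K_1(\TAT) = \Ker\phi = \{x \in \Z^N : (I-A)x = 0,\ \sum_j x_j = 0\}$, which is exactly $\Ker(s_A\colon \Ker(I-A)\to \Z)$. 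For (i), I would note that the zeroth column of $I - A_1$ is the zero vector, so $(I-A_1)\Z^{N+1}$ is generated by the remaining $N$ columns of $I - A_1$, and these coincide with the columns of $\phi$; hence $\K_0(\TAT) = \Coker\phi = \Z^{N+1}/(I-A_1)\Z^{N+1}$.

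The only content beyond bookkeeping is the transpose-and-sink accounting, so the main obstacle will be organizing conventions so that discarding the sink column on the domain side matches the vanishing of the zeroth column of $I - A_1$ on the codomain side. Concretely one checks that $(A_{\T}^t)^t$ and $A_1$ agree in every entry except the $(0,0)$ corner, where the former is $0$ and the latter is $1$; consequently $I - (A_{\T}^t)^t$ and $I - A_1$ share the same columns $1,\dots,N$, while their zeroth columns are $e_0$ and $0$ respectively. Since the zeroth column of $I - A_1$ is annihilated in the cokernel, the two images in $\Z^{N+1}$ coincide and the computation closes.
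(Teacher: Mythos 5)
Your proposal is correct and follows exactly the route the paper takes: the paper simply invokes the Raeburn--Szyma\'nski K-theory formula for the graph $C^*$-algebra $C^*(G_{A_{\T}^t})$ and states the result, whereas you have written out the bookkeeping (restriction to the regular vertices $1,\dots,N$, the resulting map $\phi(x)=(-\sum_j x_j,(I-A)x)$, and the observation that the vanishing zeroth column of $I-A_1$ makes $(I-A_1)\Z^{N+1}$ coincide with $\Im\phi$) that the paper leaves implicit. The identifications of $\Ker\phi$ with $\Ker(s_A\colon\Ker(I-A)\to\Z)$ and of $\Coker\phi$ with $\Z^{N+1}/(I-A_1)\Z^{N+1}$ are both verified correctly and are consistent with the paper's later use of these groups in Lemmas \ref{lem:etaTO} and \ref{lem:eta4.5}.
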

%\begin{proof}
%\end{proof}
Let us define homomorphisms
\begin{gather*}
\iota_{A_1}: \Z \longrightarrow  \Z^{N+1}(I - A_1)\Z^{N+1}
 \quad \text{and}
 \quad
q_{A_1}: \Z^{N+1}/(I - A_1)\Z^{N+1} \longrightarrow \Z^N/(I-A)\Z^N
\end{gather*}
by setting
$
\iota_{A_1}(m) = [0, \hat{\iota}_A(m)]
$ 
for 
$ m \in \Z$ and
$
q_{A_1}([(z_0,z_1,\dots,z_N)]) 
= [(z_1,\dots,z_N)] 
$
for
$ [(z_0,z_1,\dots,z_N)]\in \Z^{N+1}/(I - A_1)\Z^{N+1}.
$
%It is straightforward to see that the maps give well-defined homomorphisms.
Let
$$
\iota_{s_A}:\Ker(s_A: \Ker(I-A)\longrightarrow \Z) \longrightarrow
 \Ker(I-A)
 $$
be the natural inclusion map.
% defined by
% $\iota_{s_A}([x_i]_{i=1}^N) =[x_i]_{i=1}^N.$
It is direct to see the following lemma.
\begin{lemma}
We have the following cyclic six term  exact sequence.
\begin{equation}\label{eq:sixtermA1}
\begin{CD}
0 @>>> \Ker(s_A: \Ker(I-A)\longrightarrow \Z) @>{\iota_{s_A}}>> \Ker(I-A) \\
@AAA  @.   @VV{s_A}V  \\
   \Z^N/ (I-A)\Z^N  @<{q_{A_1}}<< \Z^{N+1}/ (I-{A_1})\Z^{N+1} @<\iota_{A_1}<< \Z 
\end{CD}
\end{equation}
\end{lemma}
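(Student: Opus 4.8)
The plan is to prove exactness of \eqref{eq:sixtermA1} at each of its terms by a direct computation with the explicit maps, the single piece of input being the block form of $I-A_1$. Writing an element of $\Z^{N+1}$ as $(z_0,\mathbf z)$ with $\mathbf z=(z_1,\dots,z_N)\in\Z^N$, and writing $s_A$ also for the summation map $\Z^N\to\Z$, $\mathbf z\mapsto\sum_{i=1}^N z_i$ (which restricts to \eqref{eq:ijs} on $\Ker(I-A)$), the shape of $A_1$ gives at once
\begin{equation*}
(I-A_1)(z_0,\mathbf z)=\bigl(-s_A(\mathbf z),\,(I-A)\mathbf z\bigr),
\end{equation*}
so that the source coordinate $z_0$ drops out entirely and $(I-A_1)\Z^{N+1}=\{(-s_A(\mathbf z),(I-A)\mathbf z):\mathbf z\in\Z^N\}$. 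This one identity drives every step below.

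With it in hand I would proceed in order. (a) Well-definedness of $\iota_{A_1}$: if $\sum k_i=\sum k_i'=m$ then $s_A(\mathbf k-\mathbf k')=0$, so $(I-A_1)(0,\mathbf k-\mathbf k')=(0,(I-A)(\mathbf k-\mathbf k'))$ and the class $[(0,(I-A)\mathbf k)]$ is independent of the choice of $\mathbf k$. (b) Exactness at $\Ker(I-A)$ is formal, since $\Im\iota_{s_A}$ and $\Ker\bigl(s_A\colon\Ker(I-A)\to\Z\bigr)$ are by definition the same subgroup $\Ker(s_A)$. (c) Exactness at $\Z$: if $\iota_{A_1}(m)=0$ then $(0,(I-A)\mathbf k)=(-s_A(\mathbf z),(I-A)\mathbf z)$ for some $\mathbf z$, whence $s_A(\mathbf z)=0$ and $\mathbf k-\mathbf z\in\Ker(I-A)$, so $m=s_A(\mathbf k)=s_A(\mathbf k-\mathbf z)\in\Im s_A$; conversely $m=s_A(v)$ with $v\in\Ker(I-A)$ gives $\iota_{A_1}(m)=[(0,(I-A)v)]=0$. (d) Exactness at $\Z^{N+1}/(I-A_1)\Z^{N+1}$: using the identity to subtract $(I-A_1)(0,\mathbf z)$, one normalizes any class in $\Ker q_{A_1}$ (i.e. with $\mathbf z\in(I-A)\Z^N$) to the form $[(c,\mathbf 0)]$, and likewise $\iota_{A_1}(m)=[(s_A(\mathbf k),\mathbf 0)]$; both sets equal $\{[(c,\mathbf 0)]:c\in\Z\}$, giving $\Im\iota_{A_1}=\Ker q_{A_1}$. (e) Finally $q_{A_1}$ is surjective (lift $[\mathbf z]$ by $[(0,\mathbf z)]$) and $\iota_{s_A}$ is injective, which account for the two boundary zeros of the sequence.

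No step is genuinely hard once the block identity is available; the most delicate bookkeeping is (d), where one must check that reducing modulo $(I-A_1)\Z^{N+1}$ collapses both $\Ker q_{A_1}$ and $\Im\iota_{A_1}$ to the same subgroup $\{[(c,\mathbf 0)]\}$ — this is precisely the point at which the coupling of the first coordinate with $s_A(\mathbf z)$ in $I-A_1$ is used. As a conceptual check, \eqref{eq:sixtermA1} is exactly the K-theory six term exact sequence of the Toeplitz extension $\tau_{A^t}$ of \eqref{eq:Toeplitzext} once one substitutes $\K_0(\TAT)=\Z^{N+1}/(I-A_1)\Z^{N+1}$ and $\K_1(\TAT)=\Ker(s_A)$ from Lemma \ref{lem:RS2}, $\K_0(\calK)=\Z$, $\K_1(\calK)=0$, and the standard Cuntz values $\K_0(\OAT)=\Z^N/(I-A)\Z^N$, $\K_1(\OAT)=\Ker(I-A)$; from that vantage exactness is automatic, and the only residual work is to match the index map $\K_1(\OAT)\to\K_0(\calK)$ with $s_A$ and the maps induced by $\calK\hookrightarrow\TAT$ and $\pi_{A^t}$ with $\iota_{A_1}$ and $q_{A_1}$.
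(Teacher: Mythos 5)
Your proof is correct, and it is exactly the direct verification the paper has in mind: the paper offers no argument beyond the remark that the lemma "is direct to see," and your block identity $(I-A_1)(z_0,\mathbf z)=(-s_A(\mathbf z),(I-A)\mathbf z)$ together with the normalization $\iota_{A_1}(m)=[(m,\mathbf 0)]$ supplies precisely the omitted bookkeeping, including the only nontrivial point, namely that both $\Ker q_{A_1}$ and $\Im\iota_{A_1}$ reduce to $\{[(c,\mathbf 0)]\}$. Your closing remark is also consistent with the paper's structure: the identification of this sequence with the K-theory six-term sequence of $\tau_{A^t}$ is not assumed here but is established afterwards in Lemmas \ref{lem:etaTO} and \ref{lem:eta4.5} and Proposition \ref{prop:Toeplitzexact}.
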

Let  
$s_i, i=1,\dots,N$
be the generating partial isometries of the Cuntz--Krieger algebra 
$\OAT$ for the transposed matrix $A^t$, that satisfy the operator relations
\begin{equation}\label{eq:CKAT}
\sum_{j=1}^N  s_j s_j^{*}   = 1,\qquad 
s_i^{*} s_i = \sum_{j=1}^N A^t(i,j) s_j s_j^{*} 
\quad \text{ for } i=1,\dots,N \quad (\text{see } \eqref{eq:CK}).
\end{equation}
As in \eqref{eq:Toeplitzext},
the correspondence
$t_i\longrightarrow s_i$ and $p_0 \longrightarrow 0$
yields a surjective $*$-homomorphism
$\TAT \longrightarrow \OAT$ which is written $\pi_{A^t}$,
so that we have an extension 
%The kernel of the map $\pi_{A^t}: \T_{A^t} \longrightarrow \OAT$ 
%is isomorphic to $\calK(H)$.
%Hence we have an extension
\begin{equation}\label{eq:transposeToeplitaext}
\tau_{A^t}: \quad 0 \longrightarrow \calK(H) \longrightarrow \T_{A^t}  
\overset{\pi_{A^t}}{\longrightarrow} \mathcal{O}_{A^t} \longrightarrow 0.
\end{equation}
\begin{lemma}\label{lem:etaTO}
Define homomorphisms
\begin{equation*}
\eta^\T_0: \K_0(\T_{A^t}) \longrightarrow \Z^{N+1}/(I - A_1)\Z^{N+1}
\quad
\text{and}
\quad
\quad\eta^{\mathcal{O}}_0: \K_0(\OAT) \longrightarrow \Z^{N}/(I - A)\Z^{N}
\end{equation*}
by
$ 
\eta_0^\T([t_i t_i^*]) = [e_i], \,\,
\eta_0^\T([p_0]) = [e_0]
$ 
and 
$
\eta_0^{\mathcal{O}}([s_i s_i^*) = [f_i],
$
where
$e_i =(0,\dots, 0, \overset{i+1}{1}, 0,\dots,0) \in \Z^{N+1}, i=0,1,\dots, N,$
and
$f_i =(0,\dots, 0, \overset{i}{1}, 0,\dots,0) \in \Z^N, i=1,\dots, N.$
Then
both
 $\eta^\T_0: \K_0(\T_{A^t}) \longrightarrow \Z^{N+1}/(I - A_1)\Z^{N+1}$ 
 and
 $\eta^{\mathcal{O}}_0: \K_0(\OAT) \longrightarrow \Z^{N}/(I - A)\Z^{N}$ 
 are isomorphisms such that the diagram 
\begin{equation*}
\begin{CD}
  \K_0(\calK(H)) @ = \Z \\
 @V{\iota_*}VV   @VV{\iota_{A_1}}V \\
 {\K}_0(\TAT) @>{{\eta}^{\mathcal{T}}_0}>>   \Z^{N+1}/ (I-A_1)\Z^{N+1}  \\
 @V{\pi_{A^t*}}VV   @VV{q_{A_1}}V \\
  {\K}_0(\OAT) @>{\eta^{\mathcal{O}}_0}>>\Z^N/ (I-A)\Z^N  %\\
 %@VVV   @VVV \\
 % 0 @. 0
 \end{CD}
 \end{equation*}
 commutes, and $\eta_0^{\T}([1_{\TAT}]) = [1_{N+1}]$ holds, where
 $[1_{\TAT}]$ denotes the class in $\K_0(\TAT)$ of the unit $1_{\TAT}$ 
 of $\TAT$ and $[1_{N+1}] = [(1,\dots,1)]$ 
 denotes the class in $\Z^{N+1}/(I - A_1)\Z^{N+1}$ of the vector $(1,\dots,1)\in \Z^{N+1}$.
\end{lemma}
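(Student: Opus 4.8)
The plan is to deduce both isomorphisms from the Raeburn--Szyma\'nski computation (Lemma~\ref{lem:RS2}) and then to verify the two squares and the unit formula by evaluating everything on the natural generators $[t_it_i^*]$, $[p_0]$ and $[s_is_i^*]$.

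First I would record the relations that these generators satisfy in K-theory. Since $p_0,t_1t_1^*,\dots,t_Nt_N^*$ are mutually orthogonal projections and $[t_i^*t_i]=[t_it_i^*]$, relation \eqref{eq:TATrelation} gives in $\K_0(\TAT)$
\begin{equation*}
[t_it_i^*] = \sum_{j=1}^N A^t(i,j)[t_jt_j^*] + [p_0], \qquad i=1,\dots,N.
\end{equation*}
Transcribing these under $[t_it_i^*]\mapsto e_i$, $[p_0]\mapsto e_0$ yields the vectors $e_i - e_0 - \sum_j A(j,i)e_j$ (using $A^t(i,j)=A(j,i)$), and a direct check shows these are precisely the columns $1,\dots,N$ of $I-A_1$, the zeroth column of $I-A_1$ being $0$. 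Thus $(I-A_1)\Z^{N+1}$ is exactly the subgroup of relations among the generators, so by Lemma~\ref{lem:RS2}(i) the assignment is the Raeburn--Szyma\'nski isomorphism $\K_0(\TAT)\cong \Z^{N+1}/(I-A_1)\Z^{N+1}$; here $p_0$ is the sink projection and the $t_it_i^*$ are the remaining vertex projections of $G_{A_{\T}^t}$. The identical argument applied to \eqref{eq:CKAT}, which simply omits the $p_0$ term, shows that $\eta_0^{\mathcal{O}}$ is the corresponding isomorphism for $\OAT$.

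For the top square I would use that $p_0$ is a minimal projection generating the ideal $\calK(H)$ in \eqref{eq:transposeToeplitaext}, whence $\iota_*(1)=[p_0]$ and $\eta_0^\T(\iota_*(1))=[e_0]$. On the other side, unwinding $\iota_{A_1}(1)=[(0,(I-A)(1,0,\dots,0))]$ produces the representative $(0,1-A(1,1),-A(2,1),\dots,-A(N,1))$, whose difference from $e_0=(1,0,\dots,0)$ is exactly column $1$ of $I-A_1$; hence $\iota_{A_1}(1)=[e_0]$ and the square commutes. For the bottom square, $\pi_{A^t}$ sends $t_i\mapsto s_i$ and $p_0\mapsto 0$, so $\pi_{A^t*}[t_it_i^*]=[s_is_i^*]$ and $\pi_{A^t*}[p_0]=0$: evaluating $\eta_0^{\mathcal{O}}\circ\pi_{A^t*}$ and $q_{A_1}\circ\eta_0^\T$ on $[t_it_i^*]$ both give $[f_i]$ (the latter because $q_{A_1}$ discards the zeroth coordinate of $e_i$), and on $[p_0]$ both give $0$. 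Finally, from $1_{\TAT}=\sum_{i=1}^N t_it_i^* + p_0$ I obtain $\eta_0^\T([1_{\TAT}])=\sum_{i=1}^N e_i + e_0 = [1_{N+1}]$.

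The only genuinely delicate point is the transpose/index bookkeeping in the first step: the defining relations carry $A^t$, whereas the target matrix $A_1$ is built from $A$, and the identification succeeds only because passing to $\Z^{N+1}/(I-A_1)\Z^{N+1}$ reads the relations off the columns of $A_1$ (equivalently the rows of $A$), converting $A^t(i,j)=A(j,i)$ into the correct incidence. Once this matching is pinned down, the rest is a routine evaluation on the generating projections.
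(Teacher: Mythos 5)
Your proposal is correct and follows essentially the same route as the paper: the isomorphisms are taken from the Raeburn--Szyma\'nski computation (your extra verification that the $\K_0$-relations coming from \eqref{eq:TATrelation} match the columns of $I-A_1$, with the sink contributing the zero column, is exactly the content the paper leaves implicit), the bottom square is checked on generators, and the top square reduces to the same computation $\iota_{A_1}(1)-e_0=(I-A_1)(0,1,0,\dots,0)^t$, hence $\iota_{A_1}(1)=[e_0]$. The only cosmetic difference is that the paper represents the generator of $\K_0(\calK(H))$ by $[t_1p_0t_1^*]$ and then notes $[t_1p_0t_1^*]=[p_0]$ in $\K_0(\TAT)$, whereas you use $[p_0]$ directly.
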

\begin{proof}
As
$\eta^\T_0: \K_0(\T_{A^t}) \longrightarrow \Z^{N+1}/(I - A_1)\Z^{N+1}
$
and
$
\eta^{\mathcal{O}}_0: \K_0(\OAT) \longrightarrow \Z^{N}/(I - A)\Z^{N}
$
are both isomorphisms,
it suffices to show that the commutativity of the diagram.
%The surjecive homomorphism
%$\pi_{A^t*}: \K_0(\TAT) \longrightarrow \K_0(\OAT)$
%is defined by
%$$
%\pi_{A^t*}([p_0])= 0, \qquad
%\pi_{A^t*}([t_i t_i^*]) = [s_i s_i^*] \quad \text{ for } i=1,\dots,N.
%$$
%As the surjective homomorphism
%$q_{A_1}: \Z^{N+1}/(I - A_1)\Z^{N+1} \longrightarrow
%\Z^{N}/(I - A)\Z^{N}$is defined by
%$q_{A_1}(z_0,z_1,\dots,z_N) = (z_1,\dots,z_N)$,
The commutativity
$q_{A_1}\circ \eta^{\mathcal{T}}_0 = \eta^{\mathcal{O}}_0\circ \pi_{A^t*}$
is immediate by definition of $\pi_{A^t}$.
In the short exact sequence \eqref{eq:transposeToeplitaext},
% $\calK(H)$ is a $C^*$-subalgebra of $\TAT$
% such that 
%$t_1 p_0 t_1^*$ is a 
%minimal projection whose 
the class $[t_1 p_0 t_1^*]$ in $\K_0(\calK(H))$
corresponds to the positive generator of $\Z.$
As 
$[t_1 p_0 t_1^*] 
= [(t_1p_0) (t_1p_0)^*]
=[(t_1p_0)^* (t_1p_0)] =[p_0]$
in $\K_0(\TAT)$, 
 we have
 $$
 \eta^{\mathcal{T}}_0(\iota_*([t_1 p_0 t_1^*]) )
 = \eta^{\mathcal{T}}_0([ p_0 ])
 =[e_0].
 $$
 On the other hand,
 as 
 $$
 \hat{\iota}_A(1) = (I-A) 
 \begin{bmatrix}
 1\\
 0\\
 \vdots\\
 0
 \end{bmatrix}
 =
 \begin{bmatrix}
 1 -A(1,1)\\
 -A(2,1)\\
 \vdots \\
 -A(N,1)
 \end{bmatrix}
\quad
\text{ and hence }
\quad
\iota_{A_1}(1) 
 =
 \begin{bmatrix}
 0 \\
 1 -A(1,1)\\
 -A(2,1)\\
 \vdots \\
 -A(N,1)
 \end{bmatrix}
$$
so that 
$$
\iota_{A_1}(1) -[e_0]
 =
 \begin{bmatrix}
 -1 \\
 1 -A(1,1)\\
 -A(2,1)\\
 \vdots \\
 -A(N,1)
 \end{bmatrix}
= 
(I - A_1) 
 \begin{bmatrix}
 0 \\
 1\\
 0\\
 \vdots \\
 0
 \end{bmatrix}.
$$
This shows that 
$
\iota_{A_1}(1) =[e_0]
$ in 
$
\Z^{N+1}/(I - A_1)\Z^{N+1}
$  
and
$
 \eta^{\mathcal{T}}_0(\iota_*([t_1 p_0 t_1^*])) 
 = \iota_{A_1}(1),
 $
proving that the diagram
\begin{equation*}
\begin{CD}
  \K_0(\calK(H)) @ = \Z \\
 @V{\iota_*}VV   @VV{\iota_{A_1}}V \\
 {\K}_0(\TAT) @>{{\eta}^{\mathcal{T}}_0}>>   \Z^{N+1}/ (I-A_1)\Z^{N+1}  
  \end{CD}
 \end{equation*}
commutes.
The equality
$
  \eta_0^{\T}([1_{\TAT}]) 
%= \eta_0^\T(\sum_{i=1}^N [t_i t_i^*] + [p_0]) 
%= \sum_{i=0}^N [e_i] 
= [1_{N+1}]
$
is obvious.
\end{proof}
Let $\partial: \K_1(\OAT) \longrightarrow \K_0(\calK(H))$
be the connecting map of the K-theory cyclic six term exact sequence for the extension 
\eqref{eq:transposeToeplitaext}.
\begin{lemma}\label{lem:eta4.5}
There are isomorphisms
\begin{equation*}
\eta^\T_1: \K_1(\TAT) \longrightarrow \Ker(s_A: \Ker(I-A) \longrightarrow \Z)
\quad
\text{and}
\quad
\eta^{\mathcal{O}}_1: \K_1(\OAT) \longrightarrow \Ker(I-A)
\end{equation*}
such that the diagram 
\begin{equation}\label{eq:K1TAT}
\begin{CD}
 %0 @. 0 \\
 %@VVV   @VVV \\
 \K_1(\TAT) @>{\eta^{\mathcal{T}}_1}>> \Ker(s_A: \Ker(I-A)\longrightarrow \Z) \\
 @V{\pi_{A^t*}}VV   @VV{\iota_{s_A}}V \\
 \K_1(\OAT) @>{\eta^{\mathcal{O}}_1}>>  \Ker(I-A) \\
 @V{\partial}VV   @VV{s_A}V \\
 \K_0(\calK(H))  @ = \Z \\
\end{CD}
\end{equation}
commutes.
\end{lemma}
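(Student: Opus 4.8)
The plan is to take as given the standard Cuntz--Krieger isomorphism $\eta^{\mathcal{O}}_1\colon \K_1(\OAT)\to\Ker(I-A)$ (equivalently, Lemma \ref{lem:RS2} applied to the graph $G_{A^t}$ without the sink), and to derive everything else --- the isomorphism $\eta^{\mathcal{T}}_1$, the fact that it lands in $\Ker(s_A)$, and the commutativity of both squares --- from this together with the $\K_0$-level identifications already established in Lemma \ref{lem:etaTO} and exactness of the six term sequence for the extension \eqref{eq:transposeToeplitaext}. Since $\K_1(\calK(H))=0$, that sequence linearises to
\begin{equation*}
0 \longrightarrow \K_1(\TAT) \overset{\pi_{A^t*}}{\longrightarrow} \K_1(\OAT) \overset{\partial}{\longrightarrow} \K_0(\calK(H)) \overset{\iota_*}{\longrightarrow} \K_0(\TAT) \overset{\pi_{A^t*}}{\longrightarrow} \K_0(\OAT) \longrightarrow 0,
\end{equation*}
so that $\pi_{A^t*}$ is injective with image $\Ker(\partial)$, and the last three terms have already been matched with \eqref{eq:sixtermA1}.

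The decisive step, and the one I expect to be the main obstacle, is the lower square: identifying the index map $\partial\colon \K_1(\OAT)\to \K_0(\calK(H))=\Z$ with $s_A\circ\eta^{\mathcal{O}}_1$. This is a Cuntz--Krieger analogue of the classical Toeplitz index theorem. Concretely, a class in $\K_1(\OAT)$ with $\eta^{\mathcal{O}}_1$-value $\xi=(\xi_1,\dots,\xi_N)\in\Ker(I-A)$ is to be represented by an explicit unitary $u_\xi$ built from the generating partial isometries $s_i$, which is then lifted through $\pi_{A^t}$ to a partial isometry $V$ in a matrix algebra over $\TAT$. The defect of $V$ from unitarity is governed entirely by the vacuum projection $p_0$, since the Toeplitz relations \eqref{eq:TATrelation} differ from the Cuntz--Krieger relations \eqref{eq:CKAT} exactly by the $p_0$-terms $1-\sum_j t_jt_j^*=p_0$ and $t_i^*t_i-\sum_j A^t(i,j)t_jt_j^*=p_0$. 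Computing $\partial[u_\xi]=[1-V^*V]-[1-VV^*]$ in $\K_0(\calK(H))$ then amounts to counting the resulting rank-one pieces $t_\mu p_0 t_\mu^*$, and this count is precisely $\sum_{i=1}^N\xi_i=s_A(\xi)$, the sign being pinned down by the prototype $N=1$ computation $0\to\calK(H)\to\mathcal{T}\to C(\mathbb{T})\to 0$, where the generating isometry has Fredholm index $\pm1$. Making the unitary $u_\xi$ and the rank bookkeeping precise for arbitrary $\xi$ is the technical heart of the argument.

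Granting the lower square, the remainder is formal. From $\partial=s_A\circ\eta^{\mathcal{O}}_1$ and the fact that $\eta^{\mathcal{O}}_1$ is an isomorphism we obtain $\eta^{\mathcal{O}}_1(\Ker\partial)=\Ker(s_A\colon\Ker(I-A)\to\Z)$, while exactness of the displayed sequence makes $\pi_{A^t*}$ an isomorphism of $\K_1(\TAT)$ onto $\Ker\partial$. I therefore set $\eta^{\mathcal{T}}_1:=\eta^{\mathcal{O}}_1\circ\pi_{A^t*}$; it is injective as a composite of injective maps and surjects onto $\Ker(s_A)$ by the previous sentence, hence is the desired isomorphism (and one checks it coincides with the Raeburn--Szyma\'nski isomorphism of Lemma \ref{lem:RS2}(ii)). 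The upper square commutes by construction: since $\iota_{s_A}$ is the inclusion and $\eta^{\mathcal{T}}_1$ already takes values in $\Ker(s_A)\subseteq\Ker(I-A)$, we have $\iota_{s_A}\circ\eta^{\mathcal{T}}_1=\eta^{\mathcal{O}}_1\circ\pi_{A^t*}$. This proves the lemma once the index computation of the lower square is carried out.
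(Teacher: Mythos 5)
Your proposal is correct in outline and shares the paper's skeleton (the lower square is the substance, the upper square is formal once $\pi_{A^t*}$ is known to be injective with image $\Ker\partial$ and $\iota_{s_A}$ is the inclusion), but you handle the decisive step by a genuinely different method. You propose to compute $\partial$ directly: lift an explicit unitary $u_\xi$ representing $\xi\in\Ker(I-A)$ to a partial isometry over $\TAT$ and count the rank of the defect projections built from $p_0$. The paper instead avoids any explicit lift by invoking the duality pairing between $\K^1(\OAT)=\Ext_w(\OAT)$ and $\K_1(\OAT)$: the index map of the extension $\tau_{A^t}$ is the pairing with its Busby class, and Lemma \ref{lem:theorem3.3} already records $d_w([\TAT]_w)=-[1_N]$, so $\partial([U(x)])=\langle[-1,\dots,-1]\mid x\rangle=-\sum_j x_j$ immediately; the unwanted sign is then absorbed by defining $\eta^{\mathcal{O}}_1([U])=\zeta_1^{\mathcal{O}}([U^*])$ rather than $\zeta_1^{\mathcal{O}}([U])$ itself. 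Your route is self-contained and conceptually the ``Toeplitz index theorem'' proof, but the part you defer --- constructing $u_\xi$ for an arbitrary $\xi\in\Ker(I-A)$ (which may have negative entries, forcing one to work with Rørdam's unitaries in matrix amplifications) and doing the rank bookkeeping --- is exactly the computation that the cited results of Cuntz--Krieger and \cite{MaPre2021exts} encapsulate, and it is the only nontrivial content of the lemma; as written it remains a sketch. Two small cautions if you carry it out: your identification of $\eta^{\mathcal{T}}_1$ and $\eta^{\mathcal{O}}_1$ with the Raeburn--Szyma\'nski/R{\o}rdam isomorphisms will in general only hold up to the sign (or the $U\mapsto U^*$ twist) that the paper makes explicit, and the lemma as stated only requires \emph{some} isomorphisms, so this does not affect correctness, only the claimed coincidence.
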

\begin{proof}
Let
$\zeta_1^\mathcal{O}: \K_1(\OAT)\ni [U(x)] \longrightarrow x=(x_i)_{i=1}^N \in \Ker(I-A)$
be the isomorphism defined by the inverse of
the isomorphism 
from $\Ker(I-A)$ to $\K_1(\OAT)$
 explicitly defined in \cite[P. 33]{Ro}.
%$ x \in \Ker(I-A) \longrightarrow [U(x)] \in \K_1(\OAT)$.
As in Lemma \ref{lem:theorem3.3},
the class $[\TAT]_w$ in $\Ext_w(\OAT)( = \K^1(\OAT))$ corresponds to the class
$[(-1,\dots,-1)]$ of
$(-1,\dots,-1)$ in $\Z^N/(I-A^t)\Z^N.$
Then the connecting map
$\partial: \K_1(\OAT) \longrightarrow \K_0(\calK(H)) = \Z$
is given by the pairing 
\begin{equation}\label{eq:delU}
\partial([U]) = < [-1,\dots,-1] \, \, | \, \, \zeta^{\mathcal{O}}_1([U])>, \qquad [U] \in \K_1(\OAT)
\end{equation}
where the right  hand side above is the inner product.
For $[U(x)] \in \K_1(\OAT)$, we have
\begin{equation*}
\partial([U(x)]) = < [-1,\dots,-1] \, \, | \, \,  x> = -\sum_{j=1}^N x_j
\quad \text{ for } x \in \Ker(I-A).
\end{equation*} 
By setting
$
\eta^{\mathcal{O}}_1([U]) = \zeta_1^\mathcal{O}([U^*])
$
for
$ 
[U] \in \K_1(\OAT),
$
we have
\begin{equation*}
\partial([U(x)]) 
%= < [-1,\dots,-1], \zeta^{\mathcal{O}}_1(U(x))>
= < [1,\dots,1] \, \, | \, \,  \eta^{\mathcal{O}}_1([U(x)])>
=\sum_{j=1}^N x_j
=(s_A\circ \eta^{\mathcal{O}}_1)([U(x)]),
\end{equation*}
proving
$\partial = s_A \circ \eta^{\mathcal{O}}_1.$

Since both the homomorphisms
$\pi_{A^t*}:\K_1(\T_{A^t}) \longrightarrow  \K_1(\OAT)$
and
$\iota_{s_A}: \Ker(s_A: \Ker(I-A)\longrightarrow \Z)\longrightarrow  \Ker(I-A)$
are injective,
there exists a unique isomorphism
$
\eta^\T_1: \K_1(\TAT) \longrightarrow \Ker(s_A: \Ker(I-A) \longrightarrow \Z)
$ satisfying
$\iota_{s_A} \circ \eta^\T_1 = \eta^{\mathcal{O}}_1\circ \pi_{A^t*},$
showing the commutative diagram 
\eqref{eq:K1TAT}.
\end{proof}
We thus have
\begin{proposition}\label{prop:Toeplitzexact}
The $\K$-theory cyclic six term exact sequence
\begin{equation}\label{eq:sixtermTAT}
\begin{CD}
0 @>>> \K_1(\T_{A^t}) @>{\pi_{A^t*}}>> \K_1(\mathcal{O}_{A^t}) \\
@AAA  @.   @VV{\partial}V  \\
 \K_0(\mathcal{O}_{A^t})  @<{\pi_{A^t*}}<< \K_0(\T_{A^t}) @<{\iota_*}<< \K_0(\calK(H)) 
\end{CD}
\end{equation}
for the Toeplitz extension \eqref{eq:transposeToeplitaext}
is computed to be the cyclic six term exact sequence \eqref{eq:sixtermA1},
such that there is a commutative diagram
\begin{equation}\label{eq:Toeplitzexact}
\begin{CD}
 0 @. 0 \\
 @VVV   @VVV \\
 \K_1(\TAT) @>{\eta^{\mathcal{T}}_1}>> \Ker(s_A: \Ker(I-A)\longrightarrow \Z) \\
 @V{\pi_{A^t*}}VV   @VV{\iota_{s_A}}V \\
 \K_1(\OAT) @>{\eta^{\mathcal{O}}_1}>>  \Ker(I-A) \\
 @V{\partial}VV   @VV{s_A}V \\
 \K_0(\calK(H))  @= \Z \\
 @V{\iota_*}VV   @VV{\iota_{A_1}}V \\
  \K_0(\TAT) @>{\eta^{\mathcal{T}}_0}>>   \Z^{N+1}/(I - A_1)\Z^{N+1}  \\
 @V{\pi_{A^t*}}VV   @VV{q_{A_1}}V \\
  \K_0(\OAT) @>{\eta^{\mathcal{O}}_0}>>\Z^N/ (I-A)\Z^N  \\
 @VVV   @VVV \\
  0 @. 0
 \end{CD}
 \end{equation}
where
 the horizontal arrows
 $\eta^{\mathcal{T}}_1, \eta^{\mathcal{O}}_1, \eta^{\mathcal{T}}_0, \eta^{\mathcal{O}}_0
 $
 are all isomorphisms 
 satisfying 
 $\eta^{\mathcal{T}}_0([1_{\TAT}]) = [1_{N+1}],$
 and $[1_\TAT]$ denotes the class in $\K_0(\TAT)$ of the unit $1_{\TAT}$ of $\TAT.$  
\end{proposition}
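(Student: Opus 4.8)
The plan is to assemble the three lemmas of this section into the single diagram \eqref{eq:Toeplitzexact}, in exactly the way Proposition \ref{prop:Khomocyclic} was obtained from the lemmas of the previous section. First I would write down the K-theory cyclic six term exact sequence \eqref{eq:sixtermTAT} attached to the Toeplitz extension \eqref{eq:transposeToeplitaext}. Since $\K_1(\calK(H)) = 0$, the generic cyclic sequence degenerates: the map into $\K_1(\TAT)$ vanishes and the map out of $\K_0(\OAT)$ vanishes, which produces the leading $0 \to \K_1(\TAT)$ and the trailing $\K_0(\OAT) \to 0$ and, in particular, makes $\pi_{A^t*}\colon \K_1(\TAT) \to \K_1(\OAT)$ injective. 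This is the left-hand vertical column of \eqref{eq:Toeplitzexact}.

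Next I would feed in the explicit computations. The right-hand vertical column is the cyclic six term exact sequence \eqref{eq:sixtermA1}. Lemma \ref{lem:etaTO} supplies the two isomorphisms $\eta^{\mathcal{T}}_0$ and $\eta^{\mathcal{O}}_0$ identifying $\K_0(\TAT)$ with $\Z^{N+1}/(I-A_1)\Z^{N+1}$ and $\K_0(\OAT)$ with $\Z^N/(I-A)\Z^N$, together with the commutativity $\eta^{\mathcal{T}}_0 \circ \iota_* = \iota_{A_1}$ and $q_{A_1} \circ \eta^{\mathcal{T}}_0 = \eta^{\mathcal{O}}_0 \circ \pi_{A^t*}$ of the lower two squares and the normalization $\eta^{\mathcal{T}}_0([1_{\TAT}]) = [1_{N+1}]$. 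Symmetrically, Lemma \ref{lem:eta4.5} supplies the isomorphisms $\eta^{\mathcal{T}}_1$ and $\eta^{\mathcal{O}}_1$ on the $\K_1$ level together with the commutativity $\iota_{s_A} \circ \eta^{\mathcal{T}}_1 = \eta^{\mathcal{O}}_1 \circ \pi_{A^t*}$ and $s_A \circ \eta^{\mathcal{O}}_1 = \partial$ of the upper two squares.

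Stacking these four commuting squares vertically yields \eqref{eq:Toeplitzexact} with all four horizontal arrows isomorphisms; exactness of the right-hand column is then inherited from exactness of the left-hand K-theory sequence, so \eqref{eq:sixtermTAT} is indeed computed to be \eqref{eq:sixtermA1}. The step I expect to require the most care is not the formal assembly but the identification of the connecting homomorphism $\partial$ carried out inside Lemma \ref{lem:eta4.5}: one must verify that $\partial$ is given by pairing the index datum $[(-1,\dots,-1)]$ of the Toeplitz extension against the generator $\zeta^{\mathcal{O}}_1([U(x)]) = x$ of $\K_1(\OAT)$, and then absorb the resulting sign by replacing $U$ with $U^*$ so that $\partial = s_A \circ \eta^{\mathcal{O}}_1$ holds without a sign discrepancy. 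Once that bookkeeping together with the $\K_1$-injectivity of $\pi_{A^t*}$ is in place, the proposition follows by concatenating Lemmas \ref{lem:etaTO} and \ref{lem:eta4.5}.
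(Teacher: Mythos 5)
Your proposal is correct and follows essentially the same route as the paper: the paper gives no separate proof beyond ``We thus have,'' since the proposition is exactly the concatenation of the four commuting squares established in Lemmas \ref{lem:etaTO} and \ref{lem:eta4.5} with the degenerate six term sequence ($\K_1(\calK(H))=0$) supplying the left-hand exact column. Your identification of the connecting map $\partial$ via the pairing with $[(-1,\dots,-1)]$ and the sign absorption through $U\mapsto U^*$ is precisely the delicate point the paper handles inside Lemma \ref{lem:eta4.5}.
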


%%%%%%%%%%%%%%%%%%%%%%%%%%%%%%%%%%%%%%%%%%%%%%%%%%%%%%%%%%%%%%%%%%%%%%
%{\scriptsize
%\begin{equation*}
%\begin{CD}
% 0 @>>>\K_1(\T_{A^t}) @>>> \K_1({\mathcal{O}}_{A^t}) @>>> \K_0(\calK(H))  @>>> \K_0(\T_{A^t}) @>>> \K_0({\mathcal{O}}_{A^t}) @>>> 0  \\
% @. @VV{\widetilde{\Phi}^0}V  @VV{\Phi^0}V @|   @VV{\widetilde{\Phi}^1}V  @VV{\Phi^1}V  @.  \\
%0 @>>>\Ker(I-\widehat{A})/{i_1(\Z)} @>>> \Ker(I-A) @>>> \Z @>>> \Z^N/ (I-\widehat{A})\Z^N  @>>> \Z^N/ (I-A)\Z^N  @>>> 0  \\
%\end{CD}
%\end{equation*}
%}
%%%%%%%%%%%%%%%%%%%%%%%%%%%%%%%%%%%%%%%%%%%%%%%%%

%%%%%%%%%%%%%%%%%%%%%%%%%%%%%%%%%%%%%%%%%%%%%%%%%%%%%%%%
%%%%%%%%%%%%%%%%%%%%%%%%%%%%%%%%%%%%%%%%%%
\section{K-theoretic duality for Toeplitz extensions}
%%%%%%%%%%%%%%%%%%%%%%%%%%%%%%%%%%%%%%
We have four kinds of cyclic six term exact sequences
\eqref{eq:6termA}, \eqref{eq:sixtermOA}, \eqref{eq:sixtermA1} and \eqref{eq:sixtermTAT}.
The first two are isomorphic by 
\eqref{eq:Khomocyclic},
and the second two are isomorphic by \eqref{eq:Toeplitzexact}.
In this section, we will show that \eqref{eq:sixtermOA} and \eqref{eq:sixtermA1} 
are isomorphic to prove that  all of the four cyclic six term exact sequences are isomorphic.
\begin{lemma}
There exists an isomorphism
$\xi^0: \Z^N/ (I - \widehat{A})\Z^N \longrightarrow \Z^{N+1}/(I-A_1)\Z^{N+1}
$
satisfying  
 $\xi^0(\hat{\iota}_A(1) + [1_N]) = [1_{N+1}]$
 such that 
 the diagram 
\begin{equation*}
\begin{CD}
 \Z  @= \Z \\
 @V{\hat{\iota}_A}VV   @VV{\iota_{A_1}}V \\
  \Z^N/ (I - \widehat{A})\Z^N @>{\xi^0}>>   \Z^{N+1}/(I-A_1)\Z^{N+1}  \\
 @V{q_{\widehat{A}}}VV   @VV{q_{A_1}}V \\
  \Z^N/( I - A)\Z^N @ = \Z^N/ (I-A)\Z^N  \\
 \end{CD}
 \end{equation*}
commutes.
%where
%$q_{A_1}: \Z^{N+1}/(I-A_1)\Z^{N+1}\longrightarrow \Z^N/( I - A)\Z^N$
%is defined by $q_{A_1}(z_0,z_1,\dots,z_N) = (z_1,\dots,z_N) .$
\end{lemma}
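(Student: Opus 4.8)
The plan is to avoid an abstract diagram chase and instead write $\xi^0$ down by hand as the ``prepend a zero'' map
\[
\xi^0([w]) = [(0,w_1,\dots,w_N)], \qquad w=(w_1,\dots,w_N)\in\Z^N.
\]
Everything then rests on one description: splitting a vector of $\Z^{N+1}$ as $(z_0,z)$ with $z\in\Z^N$, the $(0,0)$-entry of $I-A_1$ vanishes, so $(I-A_1)(z_0,z)=\bigl(-\sum_{j=1}^N z_j,\,(I-A)z\bigr)$ and hence $(I-A_1)\Z^{N+1}=\{(-\sum_{j}z_j,\,(I-A)z):z\in\Z^N\}$. I would record two elementary facts alongside this. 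First, the definition $\widehat{A}=A+R_1-AR_1$ factors as $I-\widehat{A}=(I-A)(I-R_1)$, which is what makes $q_{\widehat{A}}$ well defined. Second, since the only nonzero row of $R_1$ is $(1,\dots,1)$ one has $\sum_j (R_1 v)_j=\sum_j v_j$, so $\sum_j ((I-R_1)v)_j=0$; integrally this upgrades to the identification $(I-R_1)\Z^N=\Ker(c)$, where $c:\Z^N\to\Z$ is the coordinate-sum map $c(z)=\sum_j z_j$, since $(I-R_1)\Z^N$ has $\Z$-basis $\{e_j-e_1\}_{j\ge 2}$ (the standard basis vectors of $\Z^N$), which also generates $\Ker(c)$.

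With these in hand, well-definedness and injectivity are two sides of the same computation. If $w=(I-\widehat{A})v=(I-A)(I-R_1)v$, then $z:=(I-R_1)v$ has $c(z)=0$ and $(I-A)z=w$, so $(0,w)=(I-A_1)(0,z)\in(I-A_1)\Z^{N+1}$, giving well-definedness. Conversely, if $(0,w)\in(I-A_1)\Z^{N+1}$ then $w=(I-A)z$ for some $z$ with $c(z)=0$; by the second identity $z=(I-R_1)v$, whence $w=(I-\widehat{A})v$ and $[w]=0$, giving injectivity.

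Surjectivity and commutativity are then formal. For surjectivity, given a class $[(w_0,w)]$ I choose $z$ with $c(z)=-w_0$ (say $z=(-w_0,0,\dots,0)$) and subtract $(I-A_1)(0,z)$ to kill the first coordinate, exhibiting the class as $\xi^0([w-(I-A)z])$. The lower square commutes because $q_{A_1}\circ\xi^0$ and $q_{\widehat{A}}$ both send $[w]$ to $[w]$ in $\Z^N/(I-A)\Z^N$; the upper square commutes on representatives, since both $\xi^0\circ\hat{\iota}_A$ and $\iota_{A_1}$ send $m$ to $[(0,(I-A)k)]$ for any $k$ with $\sum_i k_i=m$. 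For the normalization, taking $k=(1,0,\dots,0)$ gives $\xi^0(\hat{\iota}_A(1)+[1_N])=[(0,(I-A)(1,0,\dots,0)+(1,\dots,1))]$; subtracting the length-$(N+1)$ vector $(1,1,\dots,1)$ leaves $(-1,(I-A)(1,0,\dots,0))=(I-A_1)(0,1,0,\dots,0)$, so this class equals $[1_{N+1}]$.

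The one genuinely load-bearing step is injectivity, and it is exactly where the two matrices must interlock: I need that a coordinate-sum-zero vector $z$ factors as $(I-R_1)v$, so that $(I-A)z$ pulls back through the extra factor $(I-R_1)$ into the image of $I-\widehat{A}$. Everything else is bookkeeping, so I would establish the integral identity $(I-R_1)\Z^N=\Ker(c)$ first and treat it as the crux of the argument.
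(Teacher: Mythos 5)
Your proof is correct, and it establishes everything the lemma asserts: well-definedness, bijectivity, both squares, and the normalization $\xi^0(\hat{\iota}_A(1)+[1_N])=[1_{N+1}]$ (your computation of the latter is in fact identical to the paper's). The map $\xi^0$ is the same "prepend a zero" map the paper uses, but the route to bijectivity differs. The paper exhibits explicit unimodular matrices $U,V$ with $U(I-A_1)V = 1\oplus (I-\widehat{A})$ in block form and deduces the isomorphism from this equivalence of matrices; you instead compute the two lattices directly, via the description $(I-A_1)\Z^{N+1}=\{(-\sum_j z_j,\,(I-A)z): z\in\Z^N\}$ together with the integral identity $(I-R_1)\Z^N=\Ker(c)$. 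Your version is more self-contained (no need to verify that $V$ is unimodular or check the block computation $U(I-A_1)V$), and it isolates precisely where the two presentations interlock, namely that a coordinate-sum-zero vector lies in $(I-R_1)\Z^N$; the paper's version has the advantage of being a one-line consequence of a Smith-normal-form-type equivalence once $U$ and $V$ are written down.

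One point worth flagging: you use the factorization $I-\widehat{A}=(I-A)(I-R_1)$, whereas the paper states $I-\widehat{A}=(I-A)(I+R_1)$ in Section 3. Your sign is the one consistent with the paper's definition $\widehat{A}=A+R_1-AR_1$ (and with the requirement $(I-\widehat{A})e_1=0$ needed for $i_1(\Z)\subseteq\Ker(I-\widehat{A})$ in the six-term sequence), so the paper's displayed factorization appears to be a typo and your argument is the correct one; this does not affect the validity of your proof.
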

\begin{proof}
Let $U, V$ be the $(N+1)\times (N+1)$ matrices defined by 
\begin{equation*}%\label{eq:UV}
U
=
\begin{bmatrix}
1        & 0      &\cdots &\cdots & 0  \\
1- A(1,1)& 1      &\ddots &       &\vdots \\
-A(2,1)  & 0      &\ddots &\ddots &\vdots \\
\vdots   & \vdots &\ddots &\ddots & 0  \\
-A(N,1)  & 0      &\cdots &   0   & 1
\end{bmatrix},
\qquad
V
=
\begin{bmatrix}
0     & 1    & 0     & \cdots &\cdots &  0  \\
-1    & 0    & -1    & \cdots &\cdots & -1 \\
0     & 0    & 1     & 0      &\cdots & 0 \\
\vdots&      &\ddots &\ddots  & \ddots &\vdots \\
\vdots&      &       &\ddots  &\ddots  & 0 \\
0     &\cdots&\cdots &\cdots  &0       & 1
\end{bmatrix}.
\end{equation*} 
%so that
%\begin{equation*}
%U^{-1}
%=
%\begin{bmatrix}
%1        & 0     &\cdots &\cdots &0  \\
%A(1,1)-1 & 1     & \ddots&       &\vdots \\
%A(2,1)   & 0     &\ddots &\ddots &\vdots \\
%\vdots   &\vdots &\ddots &\ddots &0 \\
%A(N,1)   & 0     &\cdots &   0   & 1
%\end{bmatrix}.
%\end{equation*}
For $(x_1,\dots,x_N) \in \Z^N$, define
$\xi^0(x_1,\dots,x_N)= (0, x_1,\dots,x_N)\in \Z^{N+1}$.
Note that 
$U^{-1}(0, x_1,\dots,x_N)= (0, x_1,\dots,x_N)\in \Z^{N+1}$
holds.
As
$
U( I -A_1) V = 
\left[
\begin{smallmatrix}
1     & 0  & \cdots   & 0 \\
0     &    &          & \\
\vdots&    & I - \widehat{A}  &  \\
0          &          &        
\end{smallmatrix}
\right],
$
we have 
$$
( I - A_1) V [0\oplus \Z^N] = U^{-1}[0\oplus (I - \widehat{A})\Z^N] = \xi^0((I - \widehat{A})\Z^N)
$$
so that 
$
\xi^0((I- \widehat{A})\Z^N) \subset ( I - A_1) \Z^{N+1}.
$
Hence 
$\xi^0: \Z^N \longrightarrow \Z^{N+1}$ 
induces a homomorphism 
$\Z^N/ (I - \widehat{A})\Z^N \longrightarrow \Z^{N+1}/(I-A_1)\Z^{N+1}
$ that is still denoted by $\xi^0$.
It gives rise to an isomorphism
because
$$
\Z^{N+1}/(I-A_1)\Z^{N+1} 
= U \Z^{N+1}/ U (I-A_1)\Z^{N+1} 
= \Z^{N+1}/( \Z \oplus (I-\widehat{A})\Z^N)
=\Z^N/ (I - \widehat{A})\Z^N .
$$
Since
$$
\hat{\iota}_A(1) + [1_N] 
= (I- A) 
\begin{bmatrix}
1\\
0\\
\vdots\\
0
\end{bmatrix}
+
\begin{bmatrix}
1\\
1\\
\vdots\\
1
\end{bmatrix}
=
\begin{bmatrix}
1-A(1,1) +1\\
-A(2,1) +1\\
\vdots\\
-A(N,1) +1
\end{bmatrix},
$$
%and $U^{-1}(0, x_1,\dots,x_N) = (0, x_1,\dots,x_N)$,
we have
$$
\xi^0(\hat{\iota}_A(1) + [1_N]) - [1_{N+1}]
=
\begin{bmatrix}
-1\\
1-A(1,1) \\
-A(2,1) \\
\vdots\\
-A(N,1) 
\end{bmatrix}
= (I - A_1) 
\begin{bmatrix}
0\\
1 \\
0 \\
\vdots\\
0 
\end{bmatrix}
$$
so that we have
$\xi^0(\hat{\iota}_A(1) + [1_N]) = [1_{N+1}]
$ in $\Z^{N+1} / ( I - A_1) \Z^{N+1}.$
The equalities 
$q_{\widehat{A}} = q_{A_1}\circ \xi^0$ and
$\iota_{A_1} = \xi^0 \circ \iota_{\widehat{A}}$
are obvious.
\end{proof}
Recall that a homomorphism
$j_A: \Ker(I - \widehat{A})/\iota_1(\Z) \longrightarrow \Ker(I-A)$
is defined in \eqref{eq:ijs},
 and $\iota_{s_A}:\Ker(s_A: \Ker(I-A)\longrightarrow \Z) \longrightarrow
 \Ker(I-A)$
stands for the inclusion map.
% defined by
% $\iota_{s_A}([x_i]_{i=1}^N) =[x_i]_{i=1}^N).$
Define 
$\tilde{\xi}^0: 
\Ker(I - \widehat{A})/ \iota_1(\Z) \longrightarrow \Ker( s_A:\Ker(I -A) \longrightarrow \Z)
$
by $j_A$.
Since 
the diagram
\begin{equation}\label{eq:tildexi0}
\begin{CD}
 \Ker(I - \widehat{A})/\iota_1(\Z) @>{\tilde{\xi}^0}>> \Ker(s_A: \Ker(I-A)\longrightarrow \Z) \\
 @V{j_A}VV   @VV{\iota_{s_A}}V \\
 \Ker(I-A) @ =  \Ker(I-A) \\
 \end{CD}
 \end{equation}
commutes,
we have the following proposition.
%By the exact sequence
%\eqref{eq:6termA}, we know that 
%$ j_A(\Ker( I - \widehat{A})) = \Ker(s_A:\Ker(I -A) \longrightarrow \Z)$
%so that $\tilde{\xi}^0$ is well-defined and 
%the diagram \eqref{eq:tildexi0} is commutative,
%because $\iota_{s_A} = \id.$ 
\begin{proposition}\label{prop:sixtermexact}
We have  a commutative diagram
\begin{equation}\label{eq:sixtermexact}
\begin{CD}
 0 @. 0 \\
 @VVV   @VVV \\
 \Ker(I - \widehat{A})/\iota_1(\Z) @>{\tilde{\xi}^0}>> \Ker(s_A: \Ker(I-A)\longrightarrow \Z) \\
 @V{j_A}VV   @VV{\iota_{s_A}}V \\
 \Ker(I-A) @ =  \Ker(I-A) \\
 @V{s_A}VV   @VV{s_A}V \\
 \Z  @= \Z \\
 @V{\hat{\iota}_A}VV   @VV{\iota_{A_1}}V \\
  \Z^N/ (I - \widehat{A})\Z^N @>{\xi^0}>>   \Z^{N+1}/(I-A_1)\Z^{N+1}  \\
 @V{q_{\widehat{A}}}VV   @VV{q_{A_1}}V \\
  \Z^N/( I - A)\Z^N @ = \Z^N/ (I-A)\Z^N  \\
 @VVV   @VVV \\
  0 @. 0
 \end{CD}
 \end{equation}
 such that 
 $\xi^0(\hat{\iota}_A(1) + [1_N]) = [1_{N+1}].$
\end{proposition}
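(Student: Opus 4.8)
The plan is to build the large diagram \eqref{eq:sixtermexact} by stacking the squares that have already been verified, and then to confirm that its two columns are the exact sequences \eqref{eq:6termA} and \eqref{eq:sixtermA1} identified through isomorphisms. First I would note that the left column of \eqref{eq:sixtermexact} is precisely the cyclic six term exact sequence \eqref{eq:6termA}, while the right column is the cyclic six term exact sequence \eqref{eq:sixtermA1} attached to the Toeplitz extension \eqref{eq:transposeToeplitaext}; both are already in hand, so the whole content of the proposition reduces to checking the commutativity of each horizontal square together with the isomorphism property of the three non-identity horizontal arrows.

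Then I would dispatch the squares one at a time. The bottom two squares, carrying the maps $\hat{\iota}_A$, $\iota_{A_1}$, $\xi^0$ and $q_{A_1}$ together with the equalities on $\Z$ and on $\Z^N/(I-A)\Z^N$, as well as the relation $\xi^0(\hat{\iota}_A(1)+[1_N])=[1_{N+1}]$, are exactly the statement of the preceding lemma, so nothing new is required there. The middle square consists of the identity maps placed over $s_A$ on either side and commutes for free. The top square is the diagram \eqref{eq:tildexi0}; here I would recall that $\tilde{\xi}^0$ is well defined because $j_A$ lands inside $\Ker(s_A\colon\Ker(I-A)\to\Z)$, which follows at once from \eqref{eq:ijs} since $s_A(j_A([l_i]_{i=1}^N))=-\sum_{i=2}^N l_i+\sum_{i=2}^N l_i=0$, and then $\iota_{s_A}\circ\tilde{\xi}^0=j_A$ holds by construction.

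It remains to see that the three non-identity horizontal arrows are isomorphisms. The arrows marked as equalities are identities, and $\xi^0$ is an isomorphism by the preceding lemma, so the only genuine verification concerns $\tilde{\xi}^0$. For this I would use exactness of \eqref{eq:6termA} at the term $\Ker(I-A)$: the injective map $j_A$ carries $\Ker(I-\widehat{A})/i_1(\Z)$ isomorphically onto the kernel of $s_A$, that is, onto $\Ker(s_A\colon\Ker(I-A)\to\Z)$, which is exactly the image of the inclusion $\iota_{s_A}$; hence $\tilde{\xi}^0$ is an isomorphism. Pasting the squares vertically then yields the commutative diagram \eqref{eq:sixtermexact} with the asserted value. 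I do not expect a real obstacle here: the substantive work was already carried out in constructing $\xi^0$ via the conjugation $U(I-A_1)V$ in the preceding lemma, and the one point deserving care is precisely this last identification of the image of $j_A$ with $\Ker(s_A\colon\Ker(I-A)\to\Z)$, where exactness of \eqref{eq:6termA} is used; everything else is formal diagram-pasting.
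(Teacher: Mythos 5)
Your proposal is correct and follows essentially the same route as the paper: the bottom squares are the preceding lemma on $\xi^0$, the middle square is trivial, and the top square is the commutativity of \eqref{eq:tildexi0} with $\tilde{\xi}^0$ induced by $j_A$. You in fact supply slightly more detail than the paper does (the check that $j_A$ lands in $\Ker(s_A)$ and that $\tilde{\xi}^0$ is an isomorphism via injectivity of $j_A$ and exactness of \eqref{eq:6termA} at $\Ker(I-A)$), which the paper leaves implicit.
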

Let us define the following four isomorphisms through 
\eqref{eq:Khomocyclic},
\eqref{eq:Toeplitzexact}
and \eqref{eq:sixtermexact}
by
\begin{equation*}%\label{eq:sixtermexact}
{\small
\begin{CD}
\widetilde{\Phi}^0_A: \widetilde{\K}^0(\OA) @>{\widetilde{\phi}^0}>> 
\Ker(I - \widehat{A})/\iota_1(\Z) @>{\tilde{\xi}^0}>> \Ker(s_A: \Ker(I-A)\longrightarrow \Z) 
 @>{\eta_1^{\T-1}}>> \K_1(\TAT), \\
 {\Phi}_A^0: \K^0(\OA) @>{\phi^0}>> \Ker(I-A) @ =  \Ker(I-A) @>{\eta_1^{\calO-1}}>>  \K_1(\OAT),\\
 \widetilde{\Phi}_A^1:  \widetilde{\K}^1(\OA) @>{\widetilde{\phi}^1}>> 
 \Z^N/ (I - \widehat{A})\Z^N @>{\xi^0}>>   \Z^{N+1}/(I-A_1)\Z^{N+1} @>{\eta_0^{\T-1}}>> \K_0(\TAT),   \\
 \Phi^1_A: {\K}^1(\OA) @>{\phi^1}>> \Z^N/( I - A)\Z^N @ = \Z^N/ (I-A)\Z^N @>{\eta_0^{\calO-1}}>> \K_0(\OAT). 
  \end{CD}}
 \end{equation*}
Therefore we reach the following theorem.
\begin{theorem}\label{thm:OATAT}
We have  a commutative diagram
\begin{equation}
\begin{CD}
 0 @. 0 \\
 @VVV   @VVV \\
 \widetilde{\K}^0(\OA) @>{\widetilde{\Phi}_A^0}>> \K_1(\TAT) \\
 @V{j_A^0}VV   @VV{\pi_{A^t*}}V \\
 \K^0(\OA) @>{\Phi_A^0}>>  \K_1(\OAT) \\
 @V{\iota_{\mathbb{C}}^*}VV   @VV{\partial}V \\
 \K^0(\mathbb{C}) @ = \K_0(\calK(H)) \\
 @V{\iota_A}VV   @VV{\iota_*}V \\
  \widetilde{\K}^1(\OA) @>{\widetilde{\Phi}_A^1}>> \K_0(\TAT)  \\
 @V{j_A^1}VV   @VV{\pi_{A^t*}}V \\
  {\K}^1(\OA) @>{\Phi_A^1}>> \K_0(\OAT) \\
 @VVV   @VVV \\
  0 @. 0
 \end{CD}
\end{equation}
where two vertical sequences are exact and 
all of the horizontal arrows are isomorphisms such that 
\begin{equation*}
\widetilde{\Phi}_A^1([\TA]_s) = - [1_{\TAT}] \qquad \text{ in} \quad  \K_0(\TAT).
\end{equation*}
\end{theorem}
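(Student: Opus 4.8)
The plan is to assemble the asserted diagram by \emph{pasting} the three commutative diagrams \eqref{eq:Khomocyclic}, \eqref{eq:sixtermexact} and \eqref{eq:Toeplitzexact} along their shared columns, exactly as the four isomorphisms $\widetilde{\Phi}_A^0,\Phi_A^0,\widetilde{\Phi}_A^1,\Phi_A^1$ were defined just before the statement. The point is that the right-hand column of \eqref{eq:Khomocyclic} is literally the left-hand column of \eqref{eq:sixtermexact} (the algebraic sequence \eqref{eq:6termA}), and the right-hand column of \eqref{eq:sixtermexact} is the right-hand column of \eqref{eq:Toeplitzexact} (the algebraic sequence \eqref{eq:sixtermA1}). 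First I would stack the three diagrams vertically in this order, so that the outer left edge is the $\K$-homology sequence \eqref{eq:sixtermOA} for $\OA$ and the outer right edge is the $\K$-theory sequence \eqref{eq:sixtermTAT} for $\TAT$; the horizontal $\Phi$-maps are then precisely the resulting threefold composites.

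Next I would verify each of the four squares of the target diagram by composing the commutativities of the three corresponding intermediate squares. For the central square, for instance, the chain $\partial\circ\Phi_A^0=\partial\circ\eta_1^{\mathcal{O}-1}\circ\phi^0=s_A\circ\phi^0=\iota_{\mathbb{C}}^*$ uses the lower square of \eqref{eq:Toeplitzexact}, the trivial central identification of \eqref{eq:sixtermexact}, and Lemma \ref{lem:phi0K} in turn; and for the square below it, $\widetilde{\Phi}_A^1\circ\iota_A=\eta_0^{\T-1}\circ\xi^0\circ\widetilde{\phi}^1\circ\iota_A=\eta_0^{\T-1}\circ\xi^0\circ\hat{\iota}_A=\eta_0^{\T-1}\circ\iota_{A_1}=\iota_*$, where the relation $\iota_{A_1}=\xi^0\circ\hat{\iota}_A$ of Proposition \ref{prop:sixtermexact} is what matches the two connecting maps $\iota_A$ and $\iota_*$. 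Exactness of the two vertical columns requires no new argument: the left column is the exact sequence of Proposition \ref{prop:Khomocyclic} and the right column is the exact sequence of Proposition \ref{prop:Toeplitzexact}. All horizontal arrows are isomorphisms, being composites of the isomorphisms furnished by those propositions and by Proposition \ref{prop:sixtermexact}.

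It then remains to track the class $[\TA]_s$ through $\widetilde{\Phi}_A^1=\eta_0^{\T-1}\circ\xi^0\circ\widetilde{\phi}^1$. By Proposition \ref{prop:Khomocyclic} one has $\widetilde{\phi}^1([\TA]_s)=-\hat{\iota}_A(1)-[1_N]$ in $\Z^N/(I-\widehat{A})\Z^N$; applying $\xi^0$ and invoking $\xi^0(\hat{\iota}_A(1)+[1_N])=[1_{N+1}]$ from Proposition \ref{prop:sixtermexact} gives $-[1_{N+1}]$; and since $\eta_0^{\T}([1_{\TAT}])=[1_{N+1}]$ by Lemma \ref{lem:etaTO}, the inverse carries $-[1_{N+1}]$ to $-[1_{\TAT}]$, which is the desired identity $\widetilde{\Phi}_A^1([\TA]_s)=-[1_{\TAT}]$ in $\K_0(\TAT)$.

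The only genuinely delicate point, and the step I expect to demand the most care, is the bookkeeping at the middle row $\K^0(\mathbb{C})=\K_0(\calK(H))$: one must confirm that the two \emph{a priori} distinct identifications of these groups with $\Z$ — by Fredholm index on the $\K$-homology side and by the class of a rank-one projection on the $\K$-theory side — are both the order-preserving normalizations fixed in the preliminaries, so that the central identity arrow is legitimate and the squares on either side of it genuinely commute. Everything else reduces to the formal pasting of commutative squares, since the structural content has already been supplied by Propositions \ref{prop:Khomocyclic}, \ref{prop:Toeplitzexact} and \ref{prop:sixtermexact}.
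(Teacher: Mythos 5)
Your proposal is correct and follows essentially the same route as the paper: the paper's proof likewise assembles the diagram by combining \eqref{eq:Khomocyclic}, \eqref{eq:sixtermexact} and \eqref{eq:Toeplitzexact} via Propositions \ref{prop:Khomocyclic}, \ref{prop:Toeplitzexact} and \ref{prop:sixtermexact}, and obtains $\widetilde{\Phi}_A^1([\TA]_s)=-[1_{\TAT}]$ by exactly the composite $\eta_0^{\T-1}\circ\xi^0\circ\widetilde{\phi}^1$ you trace. Your extra care about matching the two normalizations of $\K^0(\mathbb{C})$ and $\K_0(\calK(H))$ with $\Z$ is a point the paper settles in the preliminaries by fixing both identifications in advance.
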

\begin{proof}
By combining the three commutatuve diagrams
\eqref{eq:Khomocyclic},
\eqref{eq:Toeplitzexact}
 and \eqref{eq:sixtermexact},
 we get the disired commutative diagram 
 from  Proposition \ref{prop:Khomocyclic},
Proposition \ref{prop:Toeplitzexact}
and Proposition \ref{prop:sixtermexact}.
The equalities
$
\widetilde{\Phi}_A^1([\TA]_s) 
= (\eta_0^{\T -1}\circ \xi^0\circ \widetilde{\phi}^1)([\TA]_s)
= - [1_{\TAT}]
$
in
$\K_0(\TAT)$
are obvious.
\end{proof}
%%%%%%%%%%%%%%%%%%%%%%%%%%%%%%%%%%%%%%%%%%%%
%%%%%%%%%%%%%%%%%%%%%%%%%%%%%%%%%%%%%%%%%%
\section{K-theoretic duality for extensions}
%%%%%%%%%%%%%%%%%%%%%%%%%%%%%%%%%%%%%%
We now reach the following definition.
\begin{definition}\label{def:Kduality}
Let $\A$ and $\A^\prime$ be separable unital nuclear $C^*$-algebras.
Two extensions
\begin{equation*}
\tau:
 \quad 0 \longrightarrow 
\calK(H) \overset{\iota}{\longrightarrow}
 \E \overset{q}{\longrightarrow} 
 \A \longrightarrow 0
\quad \text{ and }
\quad
\tau^\prime:  
\quad 0 \longrightarrow 
\calK(H) \overset{\iota'}{\longrightarrow}
 \E^\prime \overset{q'}{\longrightarrow}
  \A^\prime \longrightarrow 0 
\end{equation*}
are said to be a $\K$-{\it theoretic duality pair}\/ 
if there are horizontal arrows of isomorphisms in the diagrams below  making them commutative
\begin{equation*} 
\begin{CD}
 0 @. 0 \\
 @VVV   @VVV \\
 \widetilde{\K}^0(\A) @>{\widetilde{\Phi}^0}>> \K_1(\E^\prime) \\
 @V{j_\A^0}VV   @VV{q_*}V \\
 \K^0(\A) @>{\Phi^0}>>  \K_1(\A^\prime) \\
 @V{\iota_{\mathbb{C}}^*}VV   @VV{\partial}V \\
 \K^0(\mathbb{C}) @>{\Phi^0_{\mathbb{C}}}>> \K_0(\calK(H)) \\
 @V{\iota_\A}VV   @VV{\iota_*}V \\
  \widetilde{\K}^1(\A) @>{\widetilde{\Phi}^1}>> \K_0(\E^\prime)  \\
 @V{j_{\A}^1}VV   @VV{q_*}V \\
  {\K}^1(\A) @>{\Phi^1}>> \K_0(\A^\prime) \\
 @VVV   @VVV \\
  0 @. 0
 \end{CD}
 \qquad
\text{ and }
\qquad
\begin{CD}
 0 @. 0 \\
 @VVV   @VVV \\
 \widetilde{\K}^0(\A^\prime) @>{\widetilde{\Phi}^{\prime0}}>> \K_1(\E) \\
 @V{j_{\A'}^0}VV   @VV{q_*}V \\
 \K^0(\A^\prime) @>{\widetilde{\Phi}^{\prime1}}>>  \K_1(\A) \\
 @V{\iota_{\mathbb{C}}^*}VV   @VV{\partial}V \\
 \K^0(\mathbb{C}) @>{\Phi^{\prime0}_{\mathbb{C}}}>> \K_0(\calK(H)) \\
 @V{\iota_{\A'}}VV   @VV{\iota_*}V \\
  \widetilde{\K}^1(\A^\prime) @>{\widetilde{\Phi}^{\prime1}}>> \K_0(\E)  \\
 @V{j_{\A'}^1}VV   @VV{q_*}V \\
  {\K}^1(\A') @>{\Phi^{\prime1}}>> \K_0(\A)  \\
 @VVV   @VVV \\
  0 @. 0.
 \end{CD}
 \end{equation*}
%such that all horizontal arrows are isomorphisms.
If in particular $\Phi^{0}_{\mathbb{C}} = \Phi^{\prime0}_{\mathbb{C}} =\id$
under the identification $\K^0(\mathbb{C}) = \K_0(\calK(H)) = \Z$,
and 
\begin{equation}\label{eq:def.Phitaus1}
\widetilde{\Phi}^1([\tau]_s)  = - [1_{\E^\prime}] \text{ in } \K_0(\E^\prime), \qquad
\widetilde{\Phi}^{\prime 1}([\tau^\prime]_s)  = - [1_{\E}] \text{ in } \K_0(\E),
\end{equation}
then they are said to be {\it strong}\/ $\K$-{\it theoretic duality pair}\/,
 where $ [1_{\E'}], [1_\E]$ denote the classes in $\K_0(\E'), \K_0(\E)$ of the units of 
 $\E', \E$, respectively.
\end{definition}
For the strong K-theoretic duality pair,
the above commutative diagrams say that  
the equalities \eqref{eq:def.Phitaus1}
imply
\begin{equation}\label{eq:def.Phitauw1}
{\Phi}^1([\tau]_w)  = - [1_{\A^\prime}] \text{ in } \K_0(\A^\prime), \qquad
{\Phi}^{\prime 1}([\tau^\prime]_w)  = - [1_{\A}] \text{ in } \K_0(\A).
\end{equation}
By virtue of the introduction of the notion of 
$\K$-theoretic duality for extensions of unital nuclear $C^*$-algebras,  
Theorem \ref{thm:OATAT} implies Theorem \ref{thm:main2}.
%\begin{theorem}\label{thm:main2}
%Let $A$ be an irreducible non permutation matrices.
%The Toeplitz extensions $\tau_A$ and $\tau_{A^t}$ of the Cuntz--Krieger algebras 
%$\OA$ and $\mathcal{O}_{A^t}$ are strong K-theoretic duality pair. 
%\end{theorem}
We note the following proposition.
\begin{proposition}\label{prop:Kdualityclass}
Let $\A$ and $\A^\prime$ be separable unital nuclear $C^*$-algebras.
Let
\begin{equation*}
\tau_i:
 \,\,\, 0 \longrightarrow 
\calK(H) {\longrightarrow}
 \E_i {\longrightarrow} 
 \A \longrightarrow 0
\quad \text{ and }
\quad
\tau_i^\prime:  
\,\,\, 0 \longrightarrow 
\calK(H) {\longrightarrow}
 \E_i^\prime {\longrightarrow}
  \A^\prime \longrightarrow 0 
\end{equation*}
be extensions for $i=1,2$.
 Suppose that  $[\tau_1]_s = [\tau_2]_s $ in $\Es(\A)$
 and
  $[\tau'_1]_s = [\tau'_2]_s $ in $\Es(\A^\prime)$.
 If $\tau_1$ and $\tau_1'$ are K-theoretic duality pair
 (resp. strong K-theoretic duality pair),
 then
  $\tau_2$ and $\tau_2'$ are K-theoretic duality pair
 (resp. strong K-theoretic duality pair).
 Hence the (strong) K-theoretic duality pair depends only 
 on the strong equivalence classes of extensions. 
\end{proposition}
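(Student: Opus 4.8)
The plan is to observe that the notion of (strong) $\K$-theoretic duality pair is invariant under replacing an extension by a strongly equivalent one, because strong equivalence produces an isomorphism of the ambient short exact sequences that induces the identity on the $\K$-theory of the ideal. All of the required duality isomorphisms for $(\tau_2,\tau_2')$ are then obtained from those of $(\tau_1,\tau_1')$ by composing with the isomorphisms induced on $\K$-theory.

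First I would realize each $\E_i$ as the pullback $\{(a,T)\in\A\oplus\B(H): \tau_i(a)=\pi(T)\}$, with $\calK(H)$ embedded as $\{(0,K):K\in\calK(H)\}$ and the quotient map sending $(a,T)$ to $a$. Since $[\tau_1]_s=[\tau_2]_s$, there is a unitary $U\in\B(H)$ with $\tau_2=\Ad(\pi(U))\circ\tau_1$, and the assignment $(a,T)\mapsto(a,UTU^*)$ defines a unital $*$-isomorphism $\Phi:\E_1\to\E_2$ that covers $\id_{\A}$ on the quotient and restricts to the inner automorphism $\Ad U$ on $\calK(H)$. The same construction for $\tau_1',\tau_2'$ yields a unital $*$-isomorphism $\Phi':\E_1'\to\E_2'$ covering $\id_{\A'}$ and restricting to an inner automorphism of $\calK(H)$.

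Next I would transport this to $\K$-theory. Since $\Phi'$ is an isomorphism of short exact sequences, functoriality of the six term exact sequence gives a commuting ladder between the $\K$-theory sequences of $\tau_1'$ and $\tau_2'$ in which the vertical maps on the $\K_*(\A')$ terms are the identity (as $\Phi'$ covers $\id_{\A'}$) and those on $\K_*(\calK(H))$ are the identity as well, because an inner automorphism sends a rank one projection to a Murray--von Neumann equivalent one, so it acts trivially on $\K_0(\calK(H))$, while $\K_1(\calK(H))=0$. I would then define the duality data for $(\tau_2,\tau_2')$ by keeping $\Phi^0,\Phi^0_{\mathbb{C}},\Phi^1$ unchanged --- their targets $\K_*(\A')$ and $\K_0(\calK(H))$ do not involve $\E'$ --- and replacing $\widetilde{\Phi}^0,\widetilde{\Phi}^1$ by $\Phi'_*\circ\widetilde{\Phi}^0$ and $\Phi'_*\circ\widetilde{\Phi}^1$. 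Commutativity of the first diagram of Definition \ref{def:Kduality} for $(\tau_2,\tau_2')$ then follows by pasting the given commutative diagram for $(\tau_1,\tau_1')$ onto this ladder; the second diagram is treated symmetrically, using $\Phi$ in place of $\Phi'$.

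Finally, for the strong case I would check the two normalizations. The map $\Phi^0_{\mathbb{C}}$ is untouched, hence remains $\id$ under the identification $\K^0(\mathbb{C})=\K_0(\calK(H))=\Z$. For the unit condition, using $[\tau_2]_s=[\tau_1]_s$ and that $\Phi'$ is unital I compute $(\Phi'_*\circ\widetilde{\Phi}^1)([\tau_2]_s)=\Phi'_*(\widetilde{\Phi}^1([\tau_1]_s))=\Phi'_*(-[1_{\E_1'}])=-[1_{\E_2'}]$, and symmetrically for $\tau_2'$ with $\Phi$. The only point requiring care is the very first step: it is precisely strong (and not merely weak) equivalence that yields a genuine $*$-isomorphism of the algebras $\E_i$ restricting to an inner automorphism of the ideal and thus inducing the identity on $\K_0(\calK(H))$; for weak equivalence the implementing unitary lies only in $\Q(H)$ and no such isomorphism need exist. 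Everything else is bookkeeping with the commuting ladders, and the final sentence of the proposition follows immediately.
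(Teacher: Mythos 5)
Your proposal is correct and follows essentially the same route as the paper: both produce, from the strong equivalences, unital isomorphisms $\E_1\to\E_2$ and $\E_1'\to\E_2'$ covering the identity on the quotients and restricting to $\Ad(U)$ on $\calK(H)$ (hence acting as the identity on $\K_0(\calK(H))$), and then transport the duality isomorphisms by composing with the induced maps on $\K$-theory, checking the unit normalization exactly as you do. The only difference is cosmetic — you construct the isomorphisms explicitly via the pullback picture of the extensions, whereas the paper simply asserts their existence from $[\tau_1]_s=[\tau_2]_s$.
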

\begin{proof}
Assume that $[\tau_1]_s = [\tau_2]_s$ in $\Es(\A)$
and $[\tau'_1]_s = [\tau'_2]_s $ in $\Es(\A)$.
We may find unitaries $U, U' \in \B(H)$ and
 isomorphisms $\gamma: \E_1 \longrightarrow \E_2$,
$\gamma': \E'_1 \longrightarrow \E'_2$ such that the diagrams
\begin{align*}\label{eq:exttwo}
{\begin{CD}
0 @>>> \calK(H)  @>{\iota_1}>> \E_1 @>{q_1}>> \A @>>> 0\\ 
@. @V{\Ad(U)}VV  @V{\gamma}VV  \parallel @. @. \\
0 @>>> \calK(H)  @>{\iota_2}>> \E_2 @>{q_2}>> \A @>>> 0  
\end{CD}} \\
\intertext{and}
{\begin{CD}
0 @>>> \calK(H)  @>{\iota'_1}>> \E'_1 @>{q'_1}>> \A @>>> 0\\ 
@. @V{\Ad(U')}VV  @V{\gamma'}VV  \parallel @. @. \\
0 @>>> \calK(H)  @>{\iota'_2}>> \E'_2 @>{q'_2}>> \A @>>> 0  
\end{CD}}
\end{align*}
commute.
%%%%%%%%%%%%%%%%%%%%%%%%%%%%
%Hence we have commutative diagrams
%\begin{equation*} 
%\begin{CD}
% 0 @. 0 \\
% @VVV   @VVV \\
% \K_1(\E_1) @>{\gamma_*}>> \K_1(\E_2) \\
% @V{q_{1*}}VV   @VV{q_{2*}}V \\
% \K_1(\A) @ =   \K_1(\A) \\
% @V{\partial}VV   @VV{\partial}V \\
% \K_0(\calK(H)) @>{\Ad(U)_*}>> \K_0(\calK(H)) \\
% @V{\iota_{1*}}VV   @VV{\iota_{2*}}V \\
% \K_0(\E_1) @>{\gamma_*}>> \K_0(\E_2)  \\
% @V{q_{1*}}VV   @VV{q_{2*}}V \\
% \K_0(\A) @ = \K_0(\A)  \\
% @VVV   @VVV \\
%  0 @. 0
% \end{CD}
% \qquad
%\text{ and }
%\qquad
%\begin{CD}
% 0 @. 0 \\
% @VVV   @VVV \\
% \K_1(\E'_1) @>{\gamma'_*}>> \K_1(\E'_2) \\
% @V{q'_{1*}}VV   @VV{q'_{2*}}V \\
% \K_1(\A') @ =   \K_1(\A') \\
% @V{\partial}VV   @VV{\partial}V \\
% \K_0(\calK(H)) @>{\Ad(U')_*}>> \K_0(\calK(H)) \\
% @V{\iota'_{1*}}VV   @VV{\iota'_{2*}}V \\
% \K_0(\E'_1) @>{\gamma'_*}>> \K_0(\E'_2)  \\
% @V{q'_{1*}}VV   @VV{q'_{2*}}V \\
% \K_0(\A') @ = \K_0(\A')  \\
% @VVV   @VVV \\
%  0 @. 0.
% \end{CD}
% \end{equation*}
% By using the above commutative diagrams, 
% we know that  the hypothesis 
%%%%%%%%%%%%%
Hence it is a direct consequence 
 that $\tau_1$ and $\tau_1'$ are K-theoretic duality pair
 implies that $\tau_2$ and $\tau_2'$ are K-theoretic duality pair.
  
Suppose further that $\tau_1$ and $\tau'_1$ are strong K-theoretic pair.
Let $\widetilde{\Phi}_1^1:\widetilde{\K}^1(\A_1) \longrightarrow \K_0(\E'_1)$
and 
  $\widetilde{\Phi}_1^{\prime 1}:\widetilde{\K}^1(\A'_1) \longrightarrow \K_0(\E_1)$
be isomorphisms giving rise to the strong K-theoretic duality pair  for $\tau_1$ and $\tau'_1$
such that 
$
\widetilde{\Phi}_1^1([\tau_1]_s) = -[1_{\E'_1}] \text{ in } \K_0(\E'_1)
$
 and
 $
\widetilde{\Phi}_1^{\prime 1}([\tau'_1]_s) = -[1_{\E_1}] \text{ in } \K_0(\E_1).
$
As $[\tau_1]_s = [\tau_2]_s$ and
 $[\tau'_1]_s = [\tau'_2]_s$,
 we have
$
 (\gamma'_*\circ\widetilde{\Phi}_1^1)([\tau_2]_s) = -[\gamma'_*(1_{\E'_1})] = -[1_{\E'_2}],
$ 
and symmetrically
$
(\gamma_*\circ\widetilde{\Phi}_1^{\prime 1})([\tau'_2]_s)= -[1_{\E_2}].
$
Since $\Ad(U)_* =\Ad(U')_* =\id$ on $\K_0(\calK(H))$,
%we conclude that 
the isomorphisms
$\widetilde{\Phi}_2^1:= \gamma'_*\circ\widetilde{\Phi}_1^1 
:\widetilde{\K}^1(\A_2) \longrightarrow \K_0(\E'_2)$
and 
  $\widetilde{\Phi}_2^{\prime 1}:=\gamma_*\circ\widetilde{\Phi}_1^{\prime 1 }
  :\widetilde{\K}
  ^1(\A'_2) \longrightarrow \K_0(\E_2)$
yield 
 that $\tau_2$ and $\tau'_2$ are strong K-theoretic pair.
 \end{proof}

%%%%%%%%%%%%%%%%%%%%%%%%%%%%%%%%%%%%%%%%%%%%%%%%%%%%%%%%%%%%%
%{\scriptsize
%\begin{equation*}
%\begin{CD}
%0 @>>>\widetilde{\K}^0(\A)@>>>{\K}^0(\A) @>>> \K^0(\mathbb{C}) @>>> \widetilde{\K}^1(\A) @>>>{\K}^1(\A) @>>> 0 \\
%@. @VV{\widetilde{\Phi}^0}V  @VV{\Phi^0}V @|   @VV{\widetilde{\Phi}^1}V  @VV{\Phi^1}V  @.  \\
%0 @>>>\K_1( \E^\prime)@>>> \K_1(\A^\prime) @>>> \K_0(\calK(H)) @>>> \K_0(\E^\prime) @>>> \K_0(\A^\prime) @>>> 0  
%\end{CD}
%\end{equation*}
%}
%and
%{\scriptsize
%\begin{equation*}
%\begin{CD}
%0 @>>>\widetilde{\K}^0(\A^\prime)@>>>{\K}^0(\A^\prime) @>>> \K^0(\mathbb{C}) @>>> \widetilde{\K}^1(\A^\prime) @>>>{\K}^1(\A^\prime) @>>> 0 \\
%@. @VV{\widetilde{\Phi}^0}V  @VV{\Phi^0}V @|  @VV{\widetilde{\Phi}^1}V  @VV{\Phi^1}V  @.  \\
%0 @>>>\K_1( \E)@>>> \K_1(\A) @>>> \K_0(\calK(H)) @>>> \K_0(\E) @>>> \K_0(\A) @>>> 0  
%\end{CD}
%\end{equation*}
%}
%%%%%%%%%%%%%%%%%%%%%%%%%%%%%%%%%%%%%%%%%%%%%
%%%%%%%%%%%%%%%%%%%%%%%%%%%%%%%%%%%%%%%%%%%%%%%%%%%%%%%%%%%%%%%%%%%%%%
%%%%%%%%%%%%%%%%%%%%%%%%%%%%%%%%%%%%%%%%%%%%%
\section{Duality for Toeplitz extensions}
%%%%%%%%%%%%%%%%%%%%%%%%%%%%%%%%%%%%%%%%%%%%%%%
We will first show the following proposition, that is a sort of duality 
of the matrix $A$ and its transpose $A^t$ from the viewpoint of Cuntz--Krieger algebras
and extension groups.
\begin{proposition}
Let $A, B$ be irreducible non permutation matrices with entries in $\{0,1\}.$
The following are equivalent.
\begin{enumerate}
\renewcommand{\theenumi}{(\roman{enumi})}
\renewcommand{\labelenumi}{\textup{\theenumi}}
\item 
The Cuntz--Krieger algebras $\OA$ and $\OB$ are isomorphic, and 
there exists an isomorphism $\varphi: \Ew(\OA) \longrightarrow \Ew(\OB)$
such that 
$\varphi([\TA]_w) = [\TB]_w.$ 
\item 
The Cuntz--Krieger algebras $\OAT$ and $\OBT$ are isomorphic, and 
there exists an isomorphism $\phi: \Ew(\OAT) \longrightarrow \Ew(\OBT)$
such that 
$\phi([\TAT]_w) = [\TBT]_w.$ 
\end{enumerate}
\end{proposition}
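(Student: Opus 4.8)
The plan is to run everything through the K-theoretic duality of Theorem \ref{thm:OATAT}, whose effect is to convert assertions about $\Extw$ of a Cuntz--Krieger algebra into assertions about pointed $\K_0$ of the transposed algebra. Concretely, the duality isomorphism $\Phi_A^1$ carries $\K^1(\OA)=\Extw(\OA)$ isomorphically onto $\K_0(\OAT)$, and it sends the weak Toeplitz class to minus the unit: chasing the lower commutative square of Theorem \ref{thm:OATAT}, with $j_A^1([\TA]_s)=[\TA]_w$ and $\pi_{A^t*}([1_{\TAT}])=[1_{\OAT}]$, gives $\Phi_A^1([\TA]_w)=-[1_{\OAT}]$, which is the content of \eqref{eq:def.Phitauw1}. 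The same identity holds for $B$, and, applying the theorem to $A^t,B^t$ (so that the role of the transpose is played by $A,B$, since $(A^t)^t=A$), one also gets $\Phi_{A^t}^1([\TAT]_w)=-[1_{\OA}]$ and $\Phi_{B^t}^1([\TBT]_w)=-[1_{\OB}]$.

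These identities let me restate the hypotheses. Via $\Phi_A^1,\Phi_B^1$, the existence of an isomorphism $\varphi:\Extw(\OA)\to\Extw(\OB)$ with $\varphi([\TA]_w)=[\TB]_w$ is equivalent to the existence of a group isomorphism $\psi:\K_0(\OAT)\to\K_0(\OBT)$ with $\psi([1_{\OAT}])=[1_{\OBT}]$: one takes $\psi=\Phi_B^1\circ\varphi\circ(\Phi_A^1)^{-1}$, and since both Toeplitz classes carry the sign $-1$, the two signs cancel. Thus condition (i) is equivalent to ``$\OA\cong\OB$ together with a pointed $\K_0$-isomorphism of $\OAT$ and $\OBT$'', and, symmetrically, condition (ii) is equivalent to ``$\OAT\cong\OBT$ together with a pointed $\K_0$-isomorphism of $\OA$ and $\OB$''. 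Because the pair of statements is visibly symmetric under $A\leftrightarrow A^t,\ B\leftrightarrow B^t$, it suffices to prove (i) $\Rightarrow$ (ii).

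So assume (i). The pointed $\K_0$-isomorphism $\psi$ just produced supplies the $\K_0$-part of the classifying-invariant isomorphism needed to identify $\OAT$ with $\OBT$; for the $\K_1$-part I would use the algebra isomorphism $\Phi:\OA\to\OB$. Functoriality of $\K$-homology gives an isomorphism $\Phi^*:\K^0(\OB)\to\K^0(\OA)$, and composing with the duality isomorphisms $\Phi_A^0:\K^0(\OA)\to\K_1(\OAT)$ and $\Phi_B^0:\K^0(\OB)\to\K_1(\OBT)$ yields an isomorphism $\K_1(\OAT)\to\K_1(\OBT)$. Since $\OAT$ and $\OBT$ are again simple Cuntz--Krieger algebras, hence unital Kirchberg algebras satisfying the UCT, the classification of simple Cuntz--Krieger algebras (\cite{Ro}) applied to the resulting isomorphism of the invariant $(\K_0,[1],\K_1)$ produces a $C^*$-isomorphism $\OAT\cong\OBT$. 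Finally, the pointed $\K_0$-isomorphism of $\OA$ and $\OB$ demanded by (ii) is immediate from $\OA\cong\OB$, since any $C^*$-isomorphism carries the unit to the unit. This establishes (ii), and the converse is identical after transposing.

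The step I expect to be the real crux is the very first one: identifying, via Theorem \ref{thm:OATAT}, the weak Toeplitz class $[\TA]_w$ with $-[1_{\OAT}]$, so that the extra Ext-datum in (i) becomes exactly the pointing of $\K_0(\OAT)$ that the classification theorem requires; everything afterwards is formal composition of isomorphisms. The example $A=\left[\begin{smallmatrix}1&1&1\\1&1&1\\1&0&0\end{smallmatrix}\right]$, for which $\OA\cong\mathcal{O}_3$ but $\OAT\not\cong\mathcal{O}_3$, shows that $\OA\cong\OB$ alone cannot yield (ii); it is precisely the pointing supplied by the Ext-isomorphism $\varphi$ that bridges the gap.
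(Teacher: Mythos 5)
Your proposal is correct and follows essentially the same route as the paper: both use the duality of Theorem \ref{thm:OATAT} to identify $\Ew(\OA)$ with $\K_0(\OAT)$ so that $[\TA]_w$ corresponds to $-[1_{\OAT}]$, turning each condition into a pointed $\K_0$-statement, and then invoke R{\o}rdam's classification. The only difference is cosmetic: you also match $\K_1$ before applying the classification, which is harmless but unnecessary since R{\o}rdam's theorem for simple Cuntz--Krieger algebras needs only $(\K_0,[1])$.
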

\begin{proof}
Under the identification 
between $\K_0(\OA)$ and $\Ew(\OAT)$, 
the class $[1_{\OA}]$ of the unit $1_{\OA}$ of $\OA$ correspondes to 
the class $-[\TAT]_w$ of the minus of the Toeplitz extensioon of $\OAT.$
Hence the equivalence between (i) and (ii) is immediate from R{\o}rdam's classification
result of the Cuntz--Krieger algebras (\cite{Ro}).   
\end{proof}
The following lemma is elementary, so we omit its proof.
\begin{lemma}\label{lem:No4}
Let $\tau_i: \A_i \longrightarrow \Q(H)$ 
be the Busby invariants for extensions
$$
0 \longrightarrow \calK(H) 
\longrightarrow \E_i 
\longrightarrow \A_i 
\longrightarrow 0, \qquad i=1,2.
$$
Suppose that 
there exist isomorphisms
$\alpha: \calK(H) \longrightarrow \calK(H),$
$\beta: \E_1 \longrightarrow \E_2,$
$\gamma: \A_1 \longrightarrow \A_2$
of $C^*$-algebras such that 
the diagarm
\begin{equation*}%\label{eq:No4}
\begin{CD}
0 @>>>  \calK(H) @>>> \E_1 @>>> \A_1 @>>> 0 \\
@. @V{\alpha}VV  @V{\beta}VV  @VV{\gamma}V  @.  \\
0 @>>>  \calK(H) @>>> \E_2 @>>> \A_2 @>>> 0 
\end{CD}
\end{equation*}
commutes.
Then $\gamma: \A_1 \longrightarrow \A_2$ induces isomorphisms
$\gamma_s^*:\Es(\A_2) \longrightarrow \Es(\A_1)$
and
$\gamma_w^*:\Ew(\A_2) \longrightarrow \Ew(\A_1)$
such that 
$\gamma_s^*([\tau_2]_s) = [\tau_1]_s$,
$\gamma_w^*([\tau_2]_w) = [\tau_1]_w$
and the diagram
\begin{equation*}
\begin{CD}
\Z @>{\iota_{\A_2}}>> \Es(\A_2) @>{q_2}>> \Ew(\A_2) \\
 @|  @V{\gamma_s^*}VV  @VV{\gamma_w^*}V    \\
 \Z @>{\iota_{\A_1}}>> \Es(\A_1) @>{q_1}>> \Ew(\A_1)
\end{CD}
\end{equation*}
commutes.
\end{lemma}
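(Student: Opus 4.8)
The plan is to pass to the Busby invariant picture and exploit the functoriality of the extension groups under precomposition with the isomorphism $\gamma$. First I would recall that every $*$-automorphism of $\calK(H)$ is inner, so $\alpha = \Ad(W)$ for some unitary $W \in \B(H)$. Since $\calK(H)$ sits as an essential ideal in each $\E_i$, the inclusion $\calK(H) \hookrightarrow \B(H) = M(\calK(H))$ extends uniquely to a $*$-homomorphism $\sigma_i: \E_i \to \B(H)$, and the Busby invariant $\tau_i$ is recovered as the factorization of $\pi \circ \sigma_i$ through $q_i$, that is $\pi \circ \sigma_i = \tau_i \circ q_i$, where $\pi: \B(H) \to \Q(H)$ is the quotient map. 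The key computation is then to compare the two maps $\sigma_1$ and $\Ad(W^*) \circ \sigma_2 \circ \beta$ from $\E_1$ into $\B(H)$: both restrict to the inclusion on $\calK(H)$ (using $\beta|_{\calK(H)} = \alpha = \Ad(W)$), so by the uniqueness of the extension of an essential ideal inclusion to the multiplier algebra they coincide, giving $\sigma_2 \circ \beta = \Ad(W) \circ \sigma_1$. Applying $\pi$ and using $q_2 \circ \beta = \gamma \circ q_1$ together with the surjectivity of $q_1$, I obtain the crucial identity $\tau_2 \circ \gamma = \Ad(\pi(W)) \circ \tau_1$, in which $\pi(W)$ is the image of a unitary of $\B(H)$.

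Next I would define the pullback maps $\gamma_s^*$ and $\gamma_w^*$ by precomposition, sending the class of an extension $\sigma: \A_2 \to \Q(H)$ to the class of $\sigma \circ \gamma: \A_1 \to \Q(H)$. This is well defined on both strong and weak equivalence classes, because conjugating $\sigma$ by a unitary of $\Q(H)$ (resp. by the image of a unitary of $\B(H)$) and then precomposing with $\gamma$ is the same as precomposing first and then conjugating; it is a homomorphism since precomposition commutes with the direct sum used to define addition; and it is an isomorphism with inverse $(\gamma^{-1})^*$. The identity $\tau_2 \circ \gamma = \Ad(\pi(W)) \circ \tau_1$ now shows $\gamma_s^*([\tau_2]_s) = [\tau_2 \circ \gamma]_s = [\tau_1]_s$, since $\pi(W)$ implements a strong equivalence; weakening the equivalence relation gives $\gamma_w^*([\tau_2]_w) = [\tau_1]_w$ as well. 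The commutativity of the right-hand square $q_1 \circ \gamma_s^* = \gamma_w^* \circ q_2$ is automatic, as the quotient $q_\A: \Es(\A) \to \Ew(\A)$ of \eqref{eq:ZSW} merely forgets the distinction between strong and weak equivalence and hence commutes with precomposition by $\gamma$.

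The remaining and most delicate point is the left-hand square, namely the naturality of the map $\iota_\A: \Z \to \Es(\A)$ together with the assertion that the induced map on $\Z$ is the identity. Here I would invoke that $\iota_\A$ is the connecting homomorphism of the Paschke--Higson exact sequence \eqref{eq:ZSW}, equivalently the boundary map $\K^0(\mathbb{C}) \to \widetilde{\K}^1(\A)$ in the $\K$-homology long exact sequence of \cite{HR} and \cite{PP}; this sequence is natural with respect to unital $*$-homomorphisms. Since $\gamma$ is an isomorphism of unital $C^*$-algebras it is unital, so the unital inclusions $\mathbb{C} \hookrightarrow \A_i$ are compatible with $\gamma$ and the map induced by $\gamma$ on $\K^0(\mathbb{C}) = \Z$ is the identity. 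Contravariant naturality of the boundary map then yields $\gamma_s^* \circ \iota_{\A_2} = \iota_{\A_1}$, which is exactly the commutativity of the left square with the identity on $\Z$. I expect this naturality step to be the main obstacle, in the sense that it is the only part not reducible to a direct manipulation of Busby invariants; once the explicit description of $\iota_\A$ from \cite{PP} is unwound, however, it reduces to the functoriality of the index map and the fact that $\gamma$ carries the unit of $\A_1$ to the unit of $\A_2$.
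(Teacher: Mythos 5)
Your proof is correct. Note that the paper itself offers no argument here: it states that the lemma is elementary and omits the proof, so there is nothing to compare against beyond the explicit description of $\iota_\A$ that appears later (in the proof of Theorem \ref{thm:main3}), namely $\iota_\A(m) = [\Ad(u_m)\circ\tau]_s$ for a trivial extension $\tau$ and a unitary $u_m \in \Q(H)$ of Fredholm index $m$. Your route fills the gap soundly: the identity $\tau_2\circ\gamma = \Ad(\pi(W))\circ\tau_1$ obtained from the innerness of automorphisms of $\calK(H)$ and the uniqueness of the multiplier-algebra extension of the essential-ideal inclusion is exactly what makes $\gamma_s^*([\tau_2]_s)=[\tau_1]_s$ a \emph{strong} equivalence, and the left-hand square follows because $\tau\circ\gamma$ is again a trivial extension and $\Ad(u_m)$ commutes with precomposition by $\gamma$, so one does not even need the full naturality of the Paschke--Higson sequence --- the explicit formula for $\iota_\A$ together with Voiculescu's theorem (all unital trivial extensions of a separable unital nuclear $C^*$-algebra are strongly equivalent, which is what makes $\iota_{\A_1}$ independent of the choice of trivial extension) suffices.
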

We thus have the following proposition.
\begin{proposition}\label{prop:No7}
The following are equivalent.
\begin{enumerate}
\renewcommand{\theenumi}{(\roman{enumi})}
\renewcommand{\labelenumi}{\textup{\theenumi}}
\item
 There exists an isomorphism 
$\Phi:\OA\longrightarrow \OB$ 
of $C^*$-algebras such that the induced isomorphism
$\Phi_s^*: \Es(\OB) \longrightarrow \Es(\OA)$
satsifies 
$\Phi_s^*([\TA]_s) = [\TB]_s.$
\item There exist isomorphisms 
$\Phi:\OA\longrightarrow \OB$ and $\Psi:\TA\longrightarrow \TB$
of $C^*$-algebras such that the diagram 
\begin{equation}\label{eq:CDTATBOAOB7}
\begin{CD}
\TA @>{\pi_A}>> \OA \\
@V{\Psi}VV @VV{\Phi}V \\ 
\TB @>{\pi_B}>> \OB 
\end{CD}
\end{equation}
commutes.
\end{enumerate}
\end{proposition}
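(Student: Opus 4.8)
The plan is to prove the two implications separately; the implication (ii)$\Rightarrow$(i) is an immediate application of Lemma~\ref{lem:No4}, while (i)$\Rightarrow$(ii) requires lifting a unitary from the Calkin algebra and is where the real work lies. Throughout I identify the Hilbert spaces carrying both Toeplitz extensions with a single separable infinite-dimensional $H$, write $\pi\colon\B(H)\to\Q(H)$ for the quotient map, and let $\sigma_A\colon\OA\to\Q(H)$ and $\sigma_B\colon\OB\to\Q(H)$ be the Busby invariants of $\tau_A$ and $\tau_B$, so that $[\TA]_s=[\sigma_A]_s$ and $[\TB]_s=[\sigma_B]_s$. Since $\OA$ and $\OB$ are simple, these invariants are injective, so both extensions are essential and $\TA$, $\TB$ may be identified with the pullback algebras $\pi^{-1}(\sigma_A(\OA))$ and $\pi^{-1}(\sigma_B(\OB))$ inside $\B(H)$; under this identification $\pi_A=\sigma_A^{-1}\circ\pi$ and $\pi_B=\sigma_B^{-1}\circ\pi$.

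For (ii)$\Rightarrow$(i): given $\Phi$ and $\Psi$ making \eqref{eq:CDTATBOAOB7} commute, I first observe that $\Psi$ carries $\Ker\pi_A=\calK(H)$ onto $\Ker\pi_B=\calK(H)$. Indeed $\pi_B\circ\Psi=\Phi\circ\pi_A$ forces $\Psi(\Ker\pi_A)\subseteq\Ker\pi_B$, and applying the same to $\Psi^{-1}$ (which covers $\Phi^{-1}$) gives equality. Hence $\Psi$ restricts to an isomorphism $\alpha\colon\calK(H)\to\calK(H)$ and produces a commutative diagram of extensions of the form required by Lemma~\ref{lem:No4} with $\A_1=\OA$, $\A_2=\OB$, $\gamma=\Phi$, $\beta=\Psi$. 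That lemma then yields the induced isomorphism $\Phi_s^*\colon\Es(\OB)\to\Es(\OA)$ with $\Phi_s^*([\TB]_s)=[\TA]_s$, which is condition (i).

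For (i)$\Rightarrow$(ii): the hypothesis $\Phi_s^*([\TB]_s)=[\TA]_s$ unwinds, via the standard formula $\Phi_s^*([\sigma_B]_s)=[\sigma_B\circ\Phi]_s$ for the contravariant action on the strong extension group, to the strong equivalence $[\sigma_A]_s=[\sigma_B\circ\Phi]_s$ in $\Es(\OA)$. By definition of \emph{strong} equivalence there is a unitary $U\in\B(H)$ with $\sigma_A(a)=\pi(U)\,\sigma_B(\Phi(a))\,\pi(U)^*$ for all $a\in\OA$. I then set $\Psi:=\Ad(U^*)$ and check it does the job: for $T\in\TA=\pi^{-1}(\sigma_A(\OA))$ with $a=\pi_A(T)$ one computes $\pi(U^*TU)=\pi(U)^*\sigma_A(a)\pi(U)=\sigma_B(\Phi(a))\in\sigma_B(\OB)$, so $U^*TU\in\pi^{-1}(\sigma_B(\OB))=\TB$; since $\Phi$ is bijective the same argument for $\Ad(U)$ shows $\Psi$ is an isomorphism $\TA\to\TB$. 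Finally, injectivity of $\sigma_B$ together with the computation just made give $\sigma_B(\pi_B(\Psi(T)))=\pi(\Psi(T))=\sigma_B(\Phi(\pi_A(T)))$, whence $\pi_B\circ\Psi=\Phi\circ\pi_A$, i.e.\ \eqref{eq:CDTATBOAOB7} commutes.

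The main obstacle, and the reason the \emph{strong} extension class is the correct invariant here, lies in this last implication: the equivalence must supply a unitary in $\B(H)$ itself rather than merely in the Calkin algebra $\Q(H)$. Only a genuine lift $U\in\B(H)$ lets $\Ad(U^*)$ descend to an honest $*$-isomorphism $\TA\to\TB$ covering $\Phi$; a mere weak equivalence would provide a unitary in $\Q(H)$ that need not lift, and would at best yield an isomorphism after stabilization. I therefore expect the care needed to track the strong (as opposed to weak) equivalence through the pullback picture, and to confirm that $\Ad(U^*)$ restricts to a bijection between the two pullback algebras, to be the only genuinely delicate point; the remaining verifications are routine.
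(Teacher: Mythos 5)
Your proposal is correct and follows essentially the same route as the paper: the implication (ii)$\Rightarrow$(i) is deduced from Lemma \ref{lem:No4}, and (i)$\Rightarrow$(ii) is obtained by taking the unitary $U\in\B(H)$ witnessing the strong equivalence $[\sigma_A]_s=[\sigma_B\circ\Phi]_s$ and conjugating by it on the pullback description of the Toeplitz algebras. The only cosmetic difference is that the paper writes the pullback abstractly as $\{(T,X)\in\B(H)\oplus\OA \mid \pi(T)=\tau_A(X)\}$ and sets $\Psi(T,X)=(UTU^*,\Phi(X))$, whereas you identify it with $\pi^{-1}(\sigma_A(\OA))\subset\B(H)$ via injectivity of the Busby invariant; these are the same argument.
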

\begin{proof}
Let us denote by $\tau_A: \OA\longrightarrow \Q(H)$
and
$\tau_B: \OB\longrightarrow \Q(H)$
the Busby invariants for the Toeplitz extensions $\TA$ and $\TB$,
respectively.

(i) $\Longrightarrow$ (ii):
Since $\Phi_s^*([\tau_B]_s) = [\tau_A]_s$,
there exists a unitary
$U \in \B(H)$ such that 
$\tau_B (\Phi(X)) = \pi(U)\tau_A(X)\pi(U)^*, X \in \OA$.
As
\begin{gather*}
\T_A = \{( T, X) \in \B(H) \oplus \OA \mid \pi(T) = \tau_A(X) \}, \\
\T_B = \{( T', X') \in \B(H) \oplus \OB \mid \pi(T') = \tau_B(X') \},
\end{gather*}
we may define the map
$\Psi: \TA\longrightarrow \TB$ by setting
$\Psi(T,X) = (U T U^*,\Phi(X)).$
It is easy to see that 
$\Psi(T,X) \in \TB$ for $(T,X) \in \TA$
and 
$\Psi: \TA\longrightarrow \TB$ 
gives rise to an isomorphism such that 
the diagram
\eqref{eq:CDTATBOAOB7} commutes.

(ii) $\Longrightarrow$ (i):
This implication follows from 
Lemma \ref{lem:No4}.
\end{proof}
Now we give a proof of Theorem \ref{thm:main3}.

\noindent
{\it Proof of Theorem \ref{thm:main3}.}
(i) $\Longrightarrow$ (ii):
Assume that there exist 
isomorphisms 
$\Phi:\OA\longrightarrow \OB$ and $\Psi:\TA\longrightarrow \TB$
of $C^*$-algebras such that the diagram \eqref{eq:CDTATBOAOB} commutes.
Since $\TA$ has exactly one non-trivial ideal, 
which is the kernel of the 
surjection $\pi_A:\TA\longrightarrow \OA$.
It is isomorphic to the $C^*$-algebra $\calK(H_A)$ 
of compact operators on 
the  sub Fock space $H_A$. 
%so we write
%$ 
%0 \longrightarrow 
%\calK(H_A) \longrightarrow 
%\TA \longrightarrow 
%\OA \longrightarrow 0.$
Hence the isomorphism
$\Psi:\TA\longrightarrow \TB$
satisfies $\Psi(\calK(H_A)) = \calK(H_B).$
Put
$\alpha = \Psi|_{\calK(H_A)}: \calK(H_A) \longrightarrow \calK(H_B)$
the restriction of $\Psi$ to $\calK(H_A)$.
%Since $\alpha$ maps a minimal projection  
%of $\calK(H_A)$ to a minimal projection of $\calK(H_B)$,
It induces an isomorphism from $\K_0(\calK(H_A))$ 
to $\K_0(\calK(H_B))$,
that maps $\Z$ to $\Z$ 
as the identity.
Therefore we have a commutative diagram:
\begin{equation}\label{eq:CDNo6}
\begin{CD}
0 @>>>  \calK(H_A) @>>> \TA @> \pi_A>> \OA @>>> 0 \\
@. @V{\alpha}VV  @V{\Psi}VV  @VV{\Phi}V  @.  \\
0 @>>>  \calK(H_B) @>>> \TB @> \pi_B>> \OB @>>> 0 
\end{CD}
\end{equation}
such that 
$\alpha_* = \id: \K_0(\calK(H_A))=\Z \longrightarrow  \K_0(\calK(H_B))=\Z.$
In the general exact sequence \eqref{eq:ZSW},
%\begin{equation*}
%\begin{CD}
%\Z  @>{\iota_\A}>>  \Es(\A) @>{q_\A}>>\Ew(\A)
%\end{CD}
%\end{equation*}
%at the middle of a unital nuclear $C^*$-algebra $\A$ is given by 
the homomorphism
$\iota_\A:\Z\longrightarrow \Es(\A)$ is given
by
$\iota_\A: m \in \Z \longrightarrow [\sigma_m] \in \Es(\A)
$
where $\sigma_m = \Ad(u_m) \circ \tau :\A \longrightarrow \Q(H)$
for a trivial extension $\tau: \A \longrightarrow \Q(H)$ and a unitary 
$u_m \in \Q(H)$ of Fredholm index $m$.
We denote by
$\iota_A, \iota_B$ the homomorphisms $\iota_{\OA}, \iota_{\OB}$ in
\eqref{eq:ZSW} for $\A = \OA$,
respectively.
For 
the isomorphism
$\Phi:\OA\longrightarrow \OB$,
define 
isomorphisms
$\Phi_s^*:\Es((\OB)\longrightarrow\Es(\OA)$
and
$\Phi_w^*: \Ew(\OB) \longrightarrow \Ew(\OA)$
by setting 
$\Phi_s^*([\sigma]_s) = [\sigma\circ \Phi]_s$
and
$\Phi_w^*([\sigma]_w) = [\sigma\circ \Phi]_w$,
respectively,
for a unital $*$-monomorphism
$\sigma:\OB\longrightarrow \Q(H)$.
As the equality
$
\iota_A(m) = \Phi_s^*\circ \iota_B(m),  m \in \Z
$
holds,
we have the commutative diagram:
\begin{equation*}
\begin{CD}
\Z @>{\iota_A}>> \Es(\OA) @>{q_A}>> \Ew(\OA) \\
 @|  @A{\Phi_s^*}AA  @AA{\Phi_w^*}A    \\
 \Z @>{\iota_B}>> \Es(\OB) @>{q_B}>> \Ew(\OB).
\end{CD}
\end{equation*}
By virtue of Proposition \ref{prop:No7}, the commutative diagram
\eqref{eq:CDNo6}
tells us that
 $\Phi_s^*([\T_B]_s) = [\TA]_s$ in $\Es(\OA)$ and
$\Phi_w^*([\T_B]_w) = [\TA]_w$ in $\Ew(\OA)$
so that 
$\Phi_s^*(\iota_B(1) + [\TB]_s) = \iota_A(1) + [\TA]_s.$
Under the identifications
$\widetilde{\K}^1(\, \cdot\, ) = \Es(\, \cdot\, )$
and
${\K}^1(\, \cdot\, ) = \Ew(\, \cdot\, )$,
the fact that 
the Toeplitz extensions are strong K-theoretic duality pair
gives  the following commutative diagram
\begin{equation}
\begin{CD}
 0 @. 0 @. 0 @. 0 \\
 @VVV   @VVV @VVV   @VVV \\
 \K_1(\TBT) @<{\widetilde{\Phi}_B^0}<<   \widetilde{\K}^0(\OB) 
@>>>  \widetilde{\K}^0(\OA) @>{\widetilde{\Phi}_A^0}>> \K_1(\TAT) \\
 @V{\pi_{B^t*}}VV   @VV{j_B^0}V @V{j_A^0}VV   @VV{\pi_{A^t*}}V \\
 \K_1(\OBT) @<{\Phi_B^0}<<  \K^0(\OB) @>>>  \K^0(\OA) @>{\Phi_A^0}>>  \K_1(\OAT) \\
 @V{\partial}VV     @VV{\iota_{\mathbb{C}}^*}V @V{\iota_{\mathbb{C}}^*}VV   @VV{\partial}V \\
 \K_0(\calK(H_{B^t}))  @ = \K_0(\mathbb{C}) @= \K^0(\mathbb{C}) @ = \K_0(\calK(H_{A^t})) \\
 @V{\iota_*}VV   @VV{\iota_B}V @V{\iota_A}VV   @VV{\iota_*}V \\
 \K_0(\TBT) @<{\widetilde{\Phi}_B^1}<<  \widetilde{\K}^1(\OB) @>{\Phi_s^*}>> 
\widetilde{\K}^1(\OA) @>{\widetilde{\Phi}_A^1}>> \K_0(\TAT)  \\
 @V{\pi_{B^t*}}VV   @VV{j_B^1}V @V{j_A^1}VV   @VV{\pi_{A^t*}}V \\
 \K_0(\OBT)  @<{\Phi_B^1}<< \K^1(\OB) @>{\Phi_w^*}>> {\K}^1(\OA) @>{\Phi_A^1}>> \K_0(\OAT) \\
 @VVV   @VVV @VVV   @VVV \\
  0 @. 0  @. 0 @. 0.
 \end{CD}
\end{equation}
By putting
\begin{gather*}
\Phi^{\mathcal{T}}_0 := \widetilde{\Phi}_A^1 \circ \Phi_s^* \circ \widetilde{\Phi}_B^{1-1} 
 : \K_0(\TBT) \longrightarrow \K_0(\TAT), \\
\Phi^{\mathcal{O}}_0 := {\Phi}_A^1 \circ \Phi_w^* \circ \Phi_B^{1 -1}
 : \K_0(\OBT) \longrightarrow \K_0(\OAT),
\end{gather*}
 we have a commtative diagram
\begin{equation}
\begin{CD}
 0 @. 0 \\
 @VVV   @VVV \\
\K_1(\TBT) @>>> \K_1(\TAT) \\
 @V{\pi_{B^t*}}VV   @VV{\pi_{A^t*}}V \\
 \K_1(\OBT) @>>>  \K_1(\OAT) \\
 @V{\partial}VV   @VV{\partial}V \\
 \K_0(\calK(H_{B^t})) @ = \K_0(\calK(H_{A^t})) \\
 @V{\iota_*}VV   @VV{\iota_*}V \\
  \K_0(\TBT) @>{\Phi_0^{\mathcal{T}}}>> \K_0(\TAT)  \\
 @V{\pi_{B^t*}}VV   @VV{\pi_{A^t*}}V \\
  {\K}_0(\OBT) @>{\Phi_0^{\mathcal{O}}}>> \K_0(\OAT) \\
 @VVV   @VVV \\
  0 @. 0
 \end{CD}
\end{equation}
such that 
$\Phi_0^{\mathcal{T}}([1_{\TBT}]) = [1_{\TAT}]$
and
$\Phi_0^{\mathcal{O}}([1_{\OBT}]) = [1_{\OAT}]$.
The the groups 
$\K_0(\TAT)$ and $\K_0(\TBT)$
consist of the equivalence classes of projections 
in $\TAT$ and $\TBT$, respectively,
Thanks to the classification result  \cite[Theorem 5.3]{ERR2013} (cf. \cite{ERRS})
proved by Eilers--Restroff--Ruiz,
we may find an isomorphism
$\varphi:\TBT \longrightarrow \TAT$
such that 
$\varphi_* = \Phi_{0}^{\mathcal{T}} :\K_0(\TBT) \longrightarrow \K_0(\TAT)$.
Since the isomorphism
$\varphi:\TBT \longrightarrow \TAT$
preserves their ideals of compact operators,
we see that 
$\varphi(\calK(H_{B^t})) = \calK(H_{A^t})$,
so that 
the isomorphism
$\Psi^t := \varphi^{-1}: \TAT\longrightarrow \TBT$
induces an isomorphism
$\Phi^t:\OAT\longrightarrow \OBT$
such that the diagram \eqref{eq:CDTATBOAOB2}
commutes. \qed
%Since an isomorphism from $\TA$ to $\TB$ automatically
%induces the commutative diagram in Theorem \ref{thm:main3} (i),
%we may show the following corollary.
%\begin{corollary}\label{cor:TATB}
%Let $A, B$ be irreducible non permutation matrices with entries in $\{0,1\}.$
%Then the Toeplitz algebras $\TA$ and $\TB$ are isomorphic if and only if 
%the Toeplitz algebras $\TAT$ and $\TBT$ defined by their transposed matrices are isomorphic.
%\end{corollary}

%\begin{remark}
%Looking back to the proofs of Theorem \ref{thm:main3}
%with Corollary \ref{cor:main4}, we notice that 
%one may actually prove the following:
% Suppose that  extensions 
 %$0 \longrightarrow \calK(H) \longrightarrow \E_i\longrightarrow \A_i \longrightarrow 0$ and 
%$0 \longrightarrow \calK(H) \longrightarrow  \E_i^\prime\longrightarrow \A_i^\prime \longrightarrow 0$
%are strong K-theoretic duality pair 
% for each $i=1,2$, and the $C^*$-algebras $\A_i$ and $\A_i^\prime$ are Kirchberg algebras
% satisfying UCT. If $\E_1$ and $\E_2$ are isomorphic, then 
%   $\E_1^\prime$ and $\E_2^\prime$ are isomorphic. Because 
%the Eilers--Restroff--Ruiz's classification result \cite[Corollary 4.16]{ERR2013} works for 
%Kirchberg algebras satisfying UCT in this setting, one may follow the proof
%of Theorem \ref{thm:main3} in a similar way. 
%\end{remark}
 We finally present several equivalence conditions under which two Toeplitz algebras are isomorphic.
\begin{proposition} \label{prop:classification TA}
Let $A=[A(i,j)]_{i,j=1}^N, B=[B(i,j)]_{i,j=1}^M$ be irreducible non permutation matrices with entries in $\{0,1\}.$
Then the following four assertions are isomorphic.
\begin{enumerate}
 \renewcommand{\theenumi}{(\roman{enumi})}
\renewcommand{\labelenumi}{\textup{\theenumi}}
\item
$\TA$ is isomorphic to $\TB$.
\item
There exist isomorphisms 
$\Phi:\OA\longrightarrow \OB$ and $\Psi:\TA\longrightarrow \TB$
of $C^*$-algebras such that the diagram 
\begin{equation}\label{eq:CDTATBOAOB2}
\begin{CD}
\TA @>{\pi_A}>> \OA \\
@V{\Psi}VV @VV{\Phi}V \\ 
\TB @>{\pi_B}>> \OB 
\end{CD}
\end{equation}
commutes.
\item
There exists an isomorphism
$\varphi: \Z^{N+1}/(I- A_1)\Z^{N+1} \longrightarrow \Z^{M+1}/(I- B_1)\Z^{M+1}$
of groups such that 
$\varphi([(1,\dots,1)]) =[(1,\dots,1)]$
and
$\varphi([(1,0,\dots,0)]) =[(1,0,\dots,0)].$
\item
There exists an isomorphism 
$\Phi:\OA\longrightarrow \OB$ of $C^*$-algebras 
such that 
$\Phi^*_s([\TB]_s) = [\TA]_s$ in $\Es(\OA)$
\end{enumerate}
where $A_1$ is the $(N+1)\times (N+1)$ matrix
and $B_1$ is the $(M+1) \times (M+1)$ matrix  
defined by
$
A_1
=\left[
\begin{smallmatrix}
1     & 1  & \cdots   & 1 \\
0     &    &          & \\
\vdots&    & A        &  \\
0          &          &        
\end{smallmatrix}
\right]
$
and
$
B_1
=\left[
\begin{smallmatrix}
1     & 1  & \cdots   & 1 \\
0     &    &          & \\
\vdots&    & B        &  \\
0          &          &        
\end{smallmatrix}
\right],
$
respectively.
\end{proposition}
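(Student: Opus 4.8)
Here is how I would attack Proposition~\ref{prop:classification TA}. The four assertions split into a soft cluster $\{$(i),(ii),(iv)$\}$, governed by the ideal structure of the Toeplitz algebras together with Proposition~\ref{prop:No7}, and the purely K-theoretic condition (iii), which I would connect to the others through the transpose duality of Corollary~\ref{cor:main4} and the Eilers--Restorff--Ruiz classification \cite{ERR2013}. The first move is to read (iii) correctly: by Lemma~\ref{lem:RS2} the group $\Z^{N+1}/(I-A_1)\Z^{N+1}$ is $\K_0(\TAT)$, and by Lemma~\ref{lem:etaTO}, under $\eta_0^{\T}$, the unit class $[1_{\TAT}]$ corresponds to $[(1,\dots,1)]$ while the image $\iota_*(1)$ of the positive generator of $\K_0(\calK(H))$ corresponds to $[(1,0,\dots,0)]$. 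Thus (iii) says exactly that there is a $\K_0$-isomorphism $\K_0(\TAT)\to\K_0(\TBT)$ carrying unit to unit and ideal generator to ideal generator.

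The soft equivalences come first. For (i)$\Rightarrow$(ii) I would note that an isomorphism $\Psi:\TA\to\TB$ must carry the unique proper ideal $\calK(H_A)=\Ker\pi_A$ onto the unique proper ideal $\calK(H_B)=\Ker\pi_B$, hence descends to an isomorphism $\Phi:\OA\to\OB$ making the square in (ii) commute; the reverse implication (ii)$\Rightarrow$(i) is trivial, since (ii) already provides $\Psi$. The equivalence (ii)$\Leftrightarrow$(iv) is precisely Proposition~\ref{prop:No7}.

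It remains to prove (i)$\Leftrightarrow$(iii). For (i)$\Rightarrow$(iii), Corollary~\ref{cor:main4} turns (i) into an isomorphism $\Theta:\TAT\to\TBT$; being unital and preserving the unique compact ideal, its induced map $\varphi=\Theta_*$ on $\K_0$ sends unit class to unit class and ideal-generator class to ideal-generator class, which under the identifications above is exactly the isomorphism required in (iii). The converse (iii)$\Rightarrow$(i) is the heart of the matter: I would upgrade the single $\K_0$-isomorphism $\varphi$ to an isomorphism of the entire six-term sequence \eqref{eq:sixtermTAT} of $\TAT$ onto that of $\TBT$, compatibly with order units and positive cones, and then apply \cite[Theorem~5.3]{ERR2013} to conclude $\TAT\cong\TBT$, whence $\TA\cong\TB$ by Corollary~\ref{cor:main4}. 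At the $\K_0$ level this is routine: since $\varphi$ fixes the ideal generator $\iota_*(1)=[(1,0,\dots,0)]$ it is compatible with $\iota_*$ and descends to an isomorphism $\bar\varphi:\K_0(\OAT)\to\K_0(\OBT)$ of the cokernels intertwining the quotient maps and preserving $[1_{\OAT}]$; the positive cones are preserved automatically because $\OAT$ and $\OBT$ are purely infinite simple, and on $\K_0(\calK(H))=\Z$ the map is the identity.

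The step I expect to be the real obstacle is reconstructing the $\K_1$ part of the morphism, which $\varphi$ does not see directly. Here I would invoke Lemma~\ref{lem:eta4.5}, identifying $\K_1(\OAT)\cong\Ker(I-A)$, $\K_1(\TAT)\cong\Ker(s_A)$, and $\partial$ with $s_A$. Rank--nullity gives $\operatorname{rank}\Ker(I-A)=N-\operatorname{rank}(I-A)$, equal to the free rank of $\K_0(\OAT)=\Z^N/(I-A)\Z^N$; since $\K_1$ is free, $\bar\varphi$ forces $\K_1(\OAT)\cong\Z^r\cong\K_1(\OBT)$. Because $\varphi$ is an isomorphism fixing $[(1,0,\dots,0)]$, the element $\iota_*(1)$ has the same order on both sides, so the exactness identity $\Ker(\iota_*)=\Im(\partial)$ pins both connecting maps to a common image $d\Z\subseteq\Z$. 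Fixing an arbitrary isomorphism $\K_1(\OAT)\to\K_1(\OBT)$ and correcting it by an element of $GL_r(\Z)$ — using that $GL_r(\Z)$ acts transitively on homomorphisms $\Z^r\to\Z$ with a prescribed image — I obtain $\psi_1^{\calO}$ with $\partial\circ\psi_1^{\calO}=\partial$; it then restricts to the inclusions $\K_1(\TAT)=\Ker\partial\hookrightarrow\K_1(\OAT)$ and $\K_1(\TBT)=\Ker\partial\hookrightarrow\K_1(\OBT)$, yielding $\psi_1^{\T}$ and completing a morphism of six-term sequences. Verifying that all squares commute and that the order data match is then mechanical, so that \cite[Theorem~5.3]{ERR2013} delivers $\TAT\cong\TBT$ and hence, via Corollary~\ref{cor:main4}, assertion (i).
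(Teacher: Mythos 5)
Your proposal is correct, and its overall architecture coincides with the paper's: the equivalences (i)$\Leftrightarrow$(ii) and (ii)$\Leftrightarrow$(iv) are handled softly via the uniqueness of the compact ideal and Proposition~\ref{prop:No7}, assertion (iii) is read through Lemmas~\ref{lem:RS2} and~\ref{lem:etaTO} as data on $(\K_0(\TAT),[1_{\TAT}],\iota_*(1))$, and the hard implication is settled by assembling an isomorphism of six-term sequences and invoking \cite[Theorem 5.3]{ERR2013} followed by Corollary~\ref{cor:main4}. Where you genuinely diverge is in how the $\K_1$-component of that six-term isomorphism is produced in (iii)$\Rightarrow$(i). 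The paper first applies R{\o}rdam's classification \cite{Ro} to realize the induced cokernel isomorphism $\varphi_0$ by an actual $*$-isomorphism $\Phi\colon\OAT\to\OBT$, takes $\varphi_1=\Phi_*$ on $\K_1$, and then verifies $s_B\circ\varphi_1=s_A$ by the explicit Fredholm-index pairing \eqref{eq:delU} together with $\varphi_0([1_N])=[1_M]$. You instead construct the $\K_1$-map purely algebraically: equality of free ranks of $\Ker(I-A)$ and $\Z^N/(I-A)\Z^N$, equality of $\Im(s_A)=\Ker(\iota_*)=\Im(s_B)$ forced by $\varphi([e_0])=[e_0]$, and transitivity of $GL_r(\Z)$ on surjections $\Z^r\twoheadrightarrow d\Z$ to correct an arbitrary isomorphism into one intertwining the connecting maps (the degenerate case $d=0$ being trivial). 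Your route avoids both R{\o}rdam's theorem and the index-pairing computation at this step, at the cost of not producing a distinguished $\Phi$ on the quotient algebras; since \cite[Theorem 5.3]{ERR2013} only needs the abstract order-preserving isomorphism of invariants, this is a legitimate and arguably cleaner alternative. Your handling of (i)$\Rightarrow$(iii) via Corollary~\ref{cor:main4} (rather than the paper's direct use of $\Psi_*$ on $\K_0(\TA)$) is also slightly different but consistent with the identification of the triple in (iii) as the invariant of $\TAT$, which is the reading the paper's own proof of (iii)$\Rightarrow$(i) requires.
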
 
\begin{proof}
(i) $\Longleftrightarrow$ (ii) is clear.

(ii) $\Longleftrightarrow$ (iv) is seen in Proposition \ref{prop:No7}.

(ii) $\Longrightarrow$ (iii):
 Assume the condition (ii), 
so that we have a commutative diagram \eqref{eq:CDNo6}.
Put 
$$
\varphi: 
= \Psi_*: \K_0(\TA)=\Z^{N+1}/(I- A_1)\Z^{N+1} \longrightarrow 
\K_0(\OB) =\Z^{M+1}/(I- B_1)\Z^{M+1}.
$$ 
Since the position $[1_{\TA}]$ of the unit of $\TA$ in $\K_0(\TA)$
is the class $[(1,\dots,1)]$ of the vector $(1,\dots,1)$ in 
the group $\Z^{N+1}/(I- A_1)\Z^{N+1}$,
we have $\varphi([(1,\dots,1)]) =[(1,\dots,1)]$.
Since 
$\Psi(\TA) = \TB$ and 
 the class of a minimal projection
is the class $[(1,0,\dots,0)]$ of the vector
$(1,0,\dots,0)$
in $\K_0(\TA)=\Z^{N+1}/(I- A_1)\Z^{N+1}$,
and similarly in  
$\K_0(\TB) =\Z^{M+1}/(I- B_1)\Z^{M+1}$,
we have 
$\varphi([(1,0,\dots,0)]) =[(1,0,\dots,0)].$

(iii) $\Longrightarrow$ (i):
Assume (iii).
Since the class $[(1,0,\dots,0)]$ 
of the vector $(1,0,\dots,0)$ is $[e_0]$ in the notation of 
Lemma \ref{lem:etaTO},
the condition 
$\varphi([(1,0,\dots,0)]) =[(1,0,\dots,0)]$
implies that 
$
\varphi: Z^{N+1}/(I- A_1)\Z^{N+1} \longrightarrow \Z^{M+1}/(I- B_1)\Z^{M+1}
$
induces an isomorphism 
$
\varphi_0:
 Z^{N}/(I- A)\Z^{N} \longrightarrow \Z^{M}/(I- B)\Z^{M}
$
which sends $[(1,\dots,1)]$ in 
$
 Z^{N}/(I- A)\Z^{N}
$ 
to
 $[(1,\dots,1)]$ in 
$
 Z^{M}/(I- B)\Z^{M}.
$ 
Hence there exists an isomorphism $\Phi:\OAT\longrightarrow \OBT$
such that $\Phi_* = \varphi_0:\K_0(\OAT)\longrightarrow \K_0(\OBT)$
by \cite{Ro}. 
It also induces an isomorphism
 $\Phi_*: \K_1(\OAT) =\Ker(I - A) \longrightarrow\K_1(\OBT)=\Ker(I-B)$
 written $\varphi_1$.
Let $\zeta_1^\mathcal{O}: \K_0(\OAT) \longrightarrow \Ker(I-A)$
be the isomorphism defined in the proof of Lemma \ref{lem:eta4.5}.
Let $\partial_{A^t}: \K_1(\OAT)\longrightarrow \K_0(\calK(H))$
denote the connectiong map $\partial$ in the diagram \eqref{eq:K1TAT}.
For $[U]\in \K_1(\OBT)$, by \eqref{eq:delU} we have
\begin{align*}
\partial_{A^t}(\Phi_*([U])) 
=& < [-1,\dots,-1] \, \, | \, \, \zeta^{\mathcal{O}}_1([\Phi(U)])> 
= < [-1,\dots,-1] \, \, | \, \, \Phi_*(\zeta^{\mathcal{O}}_1([U])> \\
=&  < \varphi([-1,\dots,-1]) \, \, | \, \, \zeta^{\mathcal{O}}_1([U])> 
=  < [-1,\dots,-1]) \, \, | \, \, \zeta^{\mathcal{O}}_1([U])> 
= \partial_{B^t}([U])
\end{align*}
because of the condition
$\varphi([-1,\dots,-1]) = [-1,\dots,-1].$
The map
$\iota_{A_1}:\Z\longrightarrow Z^{N+1}/(I- A_1)\Z^{N+1}$
is given by by $m \in \Z\longrightarrow m[e_0] \in Z^{N+1}/(I- A_1)\Z^{N+1}$.
Hence we have a commutative diagram
\begin{equation*}
\begin{CD}
\Ker(I-B) @<{\eta_1^\mathcal{O}}<< \K_1(\OBT) @<{\Phi_*}<< \K_1(\OAT)  @>{\eta_1^\mathcal{O}}>> \Ker(I-A) \\
@V{s_B}VV @V{\partial_{B^t}}VV @VV{\partial_{A^t}}V @VV{s_A}V \\ 
\Z @ = \K_0(\calK(H)) @ =  \K_0(\calK(H)) @ = \Z.
\end{CD}
\end{equation*}
 We thus have a commutative diagram 
\begin{equation}\label{eq:twoToeplitzexact}
\begin{CD}
 0 @. 0 \\
 @VVV   @VVV \\
\Ker(s_B: \Ker(I-B)\longrightarrow \Z) @<<< \Ker(s_A: \Ker(I-A)\longrightarrow \Z) \\
 @V{\iota_{s_B}}VV   @VV{\iota_{s_A}}V \\
 \Ker(I-B)  @<{\varphi_1}<<  \Ker(I-A) \\
 @V{s_B}VV   @VV{s_A}V \\
\Z  @= \Z \\
 @V{\iota_{B_1}}VV   @VV{\iota_{A_1}}V \\
\Z^{M+1}/(I - B_1)\Z^{M+1} @<{\varphi}<<   \Z^{N+1}/(I - A_1)\Z^{N+1}  \\
 @V{q_{B_1}}VV   @VV{q_{A_1}}V \\
 \Z^M/ (I-B)\Z^M @<{\varphi_0}<< \Z^N/ (I-A)\Z^N  \\
 @VVV   @VVV \\
  0 @. 0
 \end{CD}
 \end{equation}
 where the horizontal arrows in \eqref{eq:twoToeplitzexact} are all isomorphisms. 
 The the groups 
$\K_0(\TAT)$ and $\K_0(\TBT)$
consist of the equivalence classes of projections 
in $\TAT$ and $\TBT$, respectively,
Since $\varphi([1,\dots,1]) = [1,\dots,1]$ in $\K_0(\TBT)$
and $\varphi_0([1,\dots,1]) = [1,\dots,1]$ in $\K_0(\OBT)$,
%together with 
%Lemma \ref{lem:positiveK}, 
by  the classification result  \cite[Theorem 5.3]{ERR2013} (cf. \cite{ERRS})
proved by Eilers--Restroff--Ruiz,
we may find an isomorphism
$\varphi:\TBT \longrightarrow \TAT$,
because of Proposition \ref{prop:Toeplitzexact}.
Therefore we conclude that $\TA$ is isomorphic to $\TB$ by Corollary \ref{cor:main4}.
\end{proof}

%%%%%%%%%%%%%%%%%%%%%%%%%%%%%%%%%%%%%%%
%%%%%%%%%%%%%%%%%%%%%%%%%%%%%%%%%%%%%%%%%%%%%
\section{Examples} \label{sect:Examples}
%%%%%%%%%%%%%%%%%%%%%%%%%%%%%%%%%%%%%%%%%%%%%%%
We present examples of the triplet
$(\Z^{N+1}/ ( I - A_1)\Z^{N+1}, [(1,0,\dots,0)], [(1,\dots,1)])$ 
that shows $(\K_0(\TAT), \iota_*([1]), [1_{\TAT}])$
for several matrices, 
%that show the triple
%$(\K_0(\TAT), \iota_*([1]), [1_{\TAT}]),$ 
% in the following,  
computed without difficulty by hand.

{\bf 1.}
\begin{itemize}
\item 
 $B =
\begin{bmatrix}
1 & 1 \\
1 &1
\end{bmatrix},
\quad
(\Z^{3}/ ( I - B_1)\Z^{3}, [(1,0,0)], [(1,1,1)])
\cong(\Z, 1, 0).$
\item 
 $B_{-} =
\begin{bmatrix}
1 & 1 &0 & 0\\
1 & 1 &1 & 0 \\
0 & 1 & 1 &1 \\
0 & 0 & 1 &1 
\end{bmatrix},
\quad
(\Z^{5}/ ( I - B_{-1})\Z^{5}, [(1,0,0,0,0)], [(1,1,1,1,1)])
\cong(\Z, 1, -1).$
\item 
 $F =
\begin{bmatrix}
1 & 1 \\
1 & 0  
\end{bmatrix},
\quad
(\Z^{3}/ ( I - F_1)\Z^{3}, [(1,0,0)], [(1,1,1)])
\cong(\Z, 1, -2).$
\item 
 $C =
\begin{bmatrix}
1 & 1 &1\\
1 & 1 &1\\
1 & 0 &1  
\end{bmatrix},
\quad
\quad
(\Z^{4}/ ( I - C_1)\Z^{4}, [(1,0,0,0)], [(1,1,1,1)])
\cong(\Z, 1, -1).$
\item 
 $D =
\begin{bmatrix}
0 & 1 &1\\
1 & 1 &1\\
1 & 0 &1  
\end{bmatrix},
\quad
\quad
(\Z^{4}/ ( I - D_1)\Z^{4}, [(1,0,0,0)], [(1,1,1,1)])
\cong(\Z, 1, 0).$
 \end{itemize}
 Hence we have
 $
 \calT_B \cong \calT_{D^t}
 $ and 
 $ 
 \calT_{B_{-}}\cong \calT_{C^t},
 $
 whereas 
$\calT_{F}$ is not isomorphic to any of 
$
 \calT_B,  \calT_{B_{-}},  \calT_{C^t}, \calT_{D^t}.
 $
 Since $\K_0(\OAT) \cong \K_0(\TAT)/\Z\iota_*([1]),$
  we have
 $\calO_B \cong \calO_{B_{-}}\cong \calO_F\cong\calO_{C^t} \cong \calO_{D^t},
 $
 that are all isomorphic to $\calO_2$.

{\bf 2.}
$$
A =
\begin{bmatrix}
1 & 1 & 1\\
1 & 1 &1\\
1 & 0 &0  
\end{bmatrix},
\qquad
B = A^t
=
\begin{bmatrix}
1 & 1 & 1\\
1 & 1 &0\\
1 & 1 &0  
\end{bmatrix},
$$
\begin{gather*}
(\Z^{4}/ ( I - A_1)\Z^{4}, [(1,0,0,0)], [(1,1,1,1)])
\cong(\Z, 2, -2), \\
(\Z^{4}/ ( I - B_1)\Z^{4}, [(1,0,0,0)], [(1,1,1,1)])
\cong(\Z\oplus \Z/2\Z, 1 \oplus 0, (-1) \oplus 1 ).
\end{gather*}
Hence we have 
$\calT_A$ is not isomorphic to $\calT_B$,
 and
$\OA$ is not isomorphic to $\OB$. 
We indded see that 
$\OA$ is isomorphic to $\calO_3$, 
and
$\OB$ is isomorphic to $\calO_3\otimes M_2(\mathbb{C})$
(cf. \cite{EFW1981}, \cite{PS}).

\medskip

{\it Acknowledgment:}
%The author would like to thank Joachim Cuntz for his useful comments and suggestions
%on a preliminary version of this paper.
This work was supported by JSPS KAKENHI 
Grant Number 19K03537.


\begin{thebibliography}{99}


\bibitem{Arveson}
{\sc W. B. Arveson},
{\it Notes on extensions of $C^*$-algebras},
Duke Math. J.
{\bf 44}(1977), pp. 329--335.

\bibitem{Blackadar}
{\sc  B. E. Blackadar},
{\it K-theory for operator algebras},
MSRI Publications 5, Springer-Verlag,
 Berlin, Heidelberg and New York, 1986.
 
 %\bibitem{BF}
%{\sc R. Bowen and J. Franks},
%{\it Homology for zero-dimensional nonwandering sets},
%Ann. Math. {\bf 106}(1977), pp. 73--92.



%\bibitem{Brown84}
%{\sc L. G. Brown},
%{\it The universal coefficient theorem for $\Ext$ and quasidiagonality},
% Proceedings of International Conference on Operator Algebras
%and Group Representations,
%Monographs and Studies in Math. {\bf 17}(1984), pp. 60--64.



\bibitem{BDF}
{\sc L. G. Brown, R. G. Douglas and P. A. Fillmore},
{\it Extensions of $C^*$-algebras and K-homology},
 Ann. Math. {\bf 105}(1977),pp.  265--324.



\bibitem{Busby}
{\sc R. C. Busby},
{\it  Double centralizers and extensions of $C^*$-algebras},
 Trans. Amer. Math. Soc. {\bf 132}(1968), 79--99.


\bibitem{CE} 
{\sc M. D. Choi and E. G. Effros},
{\it The completely positive lifting problem for $C^*$-algebras},
Ann. Math. {\bf 104}(1976), 585--609.

 
 %\bibitem{Cuntz77}
%{\sc J. Cuntz},
%{\it Simple $C^*$-algebras generated by isometries},
%Comm. Math. Phys. {\bf 57}(1977), pp. 173--185.

\bibitem{Cuntz80}
{\sc J. Cuntz}, 
{\it  A class of $C^*$-algebras and topological Markov chains II: 
reducible chains and the $\Ext$-functor for $C^*$-algebras},
Invent. Math. {\bf 63}(1980), pp.  25--40.

\bibitem{Cuntz81}
{\sc J. Cuntz},
{\it K-theory for certain simple $C^*$-algebras},
 Ann. Math. {\bf 117}(1981), pp. 181--197.



\bibitem{CK}{\sc J. ~Cuntz and W. ~Krieger},
{\it A class of $C^*$-algebras and topological Markov chains},
 Invent.\ Math.\
 {\bf 56}(1980), pp.\ 251--268.
 
%\bibitem{C86}
%{\sc J. Cuntz}, 
%{\it The classification problem for the $C^*$-algebra ${\cal O}_A$}, 
%Geometric methods in operator algebras,
%Pitman Reserch Notes in Mathematics Series 
%{\bf 123}(1986), pp.\ 145--151.



\bibitem{Douglas}
{\sc R. G. Douglas},
{\it $C^*$-algebra extensions and K-homology},
 Princeton University Press,
  Princeton, New Jersey, 1980.

\bibitem{ERR2013}
{\sc S. Eilers, G. Restroff and E. Ruiz}, 
{\it Strong classification of extensions of classifiable $C^*$-algebras},
Bull. Korean Math. Soc. {\bf 59}(2022), pp. 567--608.
%, arXiv:1301.7695v1 [math.OA].(cf. MR2562779).

\bibitem{ERRS}
{\sc S. Eilers, G. Restroff, E. Ruiz and A. P. W. S{\o}rensen}, 
{\it The complete classification of unital graph $C^*$-algebras: Geometric and strong}, 
Duke Math. J. {\bf 170}(2021), pp. 2421--2517. 

\bibitem{EFW1979}
{\sc M. Enomoto, M. Fujii and Y. Watatani}, 
{\it Tensor algebras on the sub Fock space associated with $\OA$},
 Math. Japon.
{\bf 24}(1979), pp.\ 463--468.

\bibitem{EFW1981}
{\sc M. Enomoto, M. Fujii and Y. Watatani},
{\it $K_0$-groups and classifications of Cuntz--Krieger algebras}, 
Math.\ Japon.
{\bf 26}(1981),
pp.\ 443--460.


\bibitem{Ev}
{\sc D. Evans},
{\it  Gauge actions on $\OA$},
J. Operator theory
{\bf 7}(1982), pp.\ 79--100.

%\bibitem{Franks}{\sc J. Franks},
%{\it Flow equivalence of subshifts of finite type},
%Ergodic Theory Dynam. Systems {\bf 4}(1984), pp.\ 53--66.


\bibitem{Higson} 
{\sc N. Higson},
{\it  $C^*$-algebra extension theory and duality},
 Jour. Funct. Anal.  {\bf 129}(1995), pp. 349--363.


\bibitem{HR} 
{\sc N. Higson and J. Roe},
{\it  Analytic K-homology},
 Oxford Mathematical Monographs, Oxford Science Publications, 
Oxford University Press, Oxford, 2000

\bibitem{KP}
{\sc J. Kaminker and I. Putnam},
{\it K-Theoretic duality for shifts of finite type},
Commun. Math. Phys.
{\bf 187}(1997), pp. 509--522.

\bibitem{KPW}
{\sc J. Kaminker, I.F. Putnam and M. Whittaker},
{\it K-Theoretic duality for hyperbolic dynamical systems},
 J. Reine Angew. Math. {\bf 730}(2017), pp. 263--299. 



\bibitem{KS}
{\sc J. Kaminker and C. L. Schochet},
{\it Spanier-Whitehead K-duality for $C^*$-algebras},
J. Topol. Anal. {\bf 11}(2019), pp.  21--52.



\bibitem{Kasparov1975}
{\sc G. G. Kasparov},
{\it  Topological invariants of elliptic operators I : K-homology},
 Math. USSR Izvestijia {\bf 9}(1975), pp.  751--792.


\bibitem{Kasparov1981}
{\sc G. G. Kasparov},
{\it The operator K-functor and extensions of $C^*$-algebras},
Math. USSR Izvestijia
{\bf 16}(1981), pp. 513--572.


\bibitem{MaPre2021exts}
{\sc K. Matsumoto},
{\it On strong extension groups of Cuntz--Krieger algebras},
preprint, arXiv:2208.08001 [math.OA].

\bibitem{Paschke1981}
{\sc W. L. Paschke},
{\it K-theory for commutants in the Calkin algebra},
 Pacific  J. Math.  {\bf 95}(1981), pp. 427--437. 

\bibitem{PS}
{\sc W. L. Paschke and N. Salinas},
{\it Matrix algebras over $\mathcal{O}_n$},
 Michigan Math. J. {\bf 26}(1979), pp. 3--12. 

\bibitem{PP}
{\sc M. Pimsner and S. Popa},
{\it The $\Ext$-groups of some $C^*$-algebras considered by J. Cuntz}, 
 Rev. Roum. Math. Pures et Appl. 
{\bf 23}(1978), pp. 1069--1076.

%\bibitem{PV}
%{\sc M. Pimsner and D. Voiculescu},
%{\it Exact sequences for K-groups and Ext-groups of certain cross-products 
%$C^*$-algebras},
%J. Operator Theory {\bf 4}(1980), pp.  93--118.

\bibitem{RS}
{\sc I. Raeburn and W. Szyma\'nski},
{\it Cuntz--Krieger algebras of infinite graphs and matrices},
 Trans. Amer. Math. Soc. {\bf 356}(2004), 39--59.



\bibitem{Ro}{\sc M. R{\o}rdam},
{\it Classification of Cuntz-Krieger algebras}, 
 K-theory
{\bf 9}(1995), pp. \ 31--58.



%\bibitem{Vo}
%{\sc D. Voiculescu},
%{\it  A non-commutative Weyl-von Neumann theorem},
% Rev. Roum. Math. Pures et Appl. 
%{\bf  21}(1976), pp. 97--113.


\end{thebibliography}
\end{document}